\documentclass[10pt]{article}
\usepackage{amsmath,amssymb,amsthm}
\usepackage{booktabs,graphicx}
\usepackage[round]{natbib}
\usepackage[hidelinks]{hyperref}
\hypersetup{pdftitle={Exact Calibration and Sharp Risk Geometry for Volume-Sampled Ridge Regression},pdfauthor={Kihun Rhee}}
\newtheorem{theorem}{Theorem}[section]
\newtheorem{lemma}[theorem]{Lemma}
\newtheorem{proposition}[theorem]{Proposition}
\newtheorem{corollary}[theorem]{Corollary}
\theoremstyle{definition}

\theoremstyle{remark}

\numberwithin{equation}{section}

\title{Exact Calibration and Sharp Risk Geometry\\for Volume-Sampled Ridge Regression}
\author{Kihun Rhee\\\href{mailto:rheekh00@snu.ac.kr}{rheekh00@snu.ac.kr}}
\date{October 2026}
\makeatletter
\renewcommand{\maketitle}{%
  \begin{center}
  {\Large\bfseries\@title\par}\vspace{1em}
  {\normalsize\@author\par}\vspace{.6em}
  {\normalsize\@date\par}
  \end{center}\vspace{1em}}
\makeatother
\begin{document}
\maketitle
\begin{abstract}
We study ridge regression from exactly $s$ distinct rows of a fixed design.
Responses are fixed, and only the subset is random.
The determinant law and selected ridge fit share one positive definite penalty.
Established mean identities and exponential-family duality give the unique
penalty that matches a prescribed full-data ridge fit in expectation.
It exists exactly when $s$ exceeds the target's effective dimension.

Our main result concerns centered covariance risk normalized by full-data
penalized loss. For balanced signed coordinate replicas, a strict sector
inequality gives the sharp risk and all maximizing responses at every budget
from the dimension to one below the row count. This holds for any nonzero
positive semidefinite query. With the target and query fixed, the maximizing
response space is unchanged across these budgets.
For general designs, we characterize attainment of a leave-one-out envelope.
For existing real equiangular tight frames, flat row query energy characterizes
when every nonzero residual response maximizes at two deletions.
At three deletions, we give the sharp risk and complete maximizing space
for isotropic queries, using unequal triangle weights.

The balanced geometry yields a same-sample unbiased ridge--Horvitz--Thompson
mixture with lower sharp risk and an exact mean-share improvement boundary.
Under full recalibration after feature changes, we prove quadratic regret
from searching the complete old maximizing space and a query-uniform bound
on the mixture's risk gain. The strongest sector inequalities have exact
computer-assisted proofs.

\end{abstract}
\section{Introduction}\label{sec:introduction}
A sampled regression fit may match a full-data fit in expectation and still
have substantial variance. Two questions therefore arise. Can an exact row
budget preserve a prescribed ridge target? If so, which responses make the
remaining covariance risk largest?

We study these questions for a fixed design and fixed responses.
We sample distinct original rows under a determinant law.
The sampling law and selected ridge fit share one positive definite penalty.
The full-data target, which we call Full, has its own prescribed penalty.
We choose the sampled penalty from features alone so that the selected fit
has mean Full for every response. We call this choice \emph{calibration}.

We first derive calibration from established selected-inverse identities
and exponential-family duality. This gives the basis for our covariance analysis.
Calibration exists exactly when the row budget exceeds the effective
dimension of the target. When it exists, the penalty is unique.
The result applies to any positive definite target.
It also allows feasible budgets below the feature dimension.
Thus a fixed number of observed labels can preserve a target even when
individual sampled designs are rank deficient.

Calibration makes the next question precise.
For any fixed query inputs, expected squared-loss excess over Full is a
covariance trace. We normalize it by Full's penalized loss and maximize over
nonzero responses. Our goal is both the sharp value and the complete set of
responses that attain it. These are different tasks from bounding a
coefficient covariance matrix or locating its zero-variance responses.

A balanced replica design has the same number of signed copies of each
coordinate vector. After undoing the row signs, each response group
splits into its mean and zero-sum contrasts. Our main technical result is a strict inequality
between the contrast and mean sectors. It holds at every budget from the dimension to one
below the row count. The strict coefficient inequality forces every worst-case response
to have zero group means.
The resulting deficit gives the sharp risk and every maximizing response
for each nonzero positive semidefinite query, including all ties.
The maximizing space in the original response coordinates stays fixed
as the budget changes, with the target and query held fixed.
The proof of the strict inequality is exact and computer-assisted.
A separate quantitative bound describes the minimum-budget, small-penalty corner.
An explicit lower-budget phase change shows why the budget condition matters.

Two complementary regimes extend the geometric picture.
For general designs, leave-one-out sampling gives an explicit risk envelope.
A row-dependence condition determines exactly when it is attained.
For an existing real equiangular tight frame (ETF), flat query energy means
that the query assigns the same squared length to every frame row.
Residual responses are orthogonal to the columns of the design.
At two deletions, flat query energy is necessary and sufficient for every
nonzero residual response to maximize risk. For isotropic queries at three deletions, we find the sharp value and
complete residual maximizing space. The proof uses the actual unequal
triangle weights.
These are separate exact classifications with distinct assumptions.
Table~\ref{tab:geometry-scope} records their scopes.

\begin{table}[ht]
\centering
\caption{Exact response geometry under the calibrated law.
The target and physical query are fixed while the response varies.}
\label{tab:geometry-scope}
\small
\begin{tabular}{@{}p{.25\textwidth}p{.29\textwidth}p{.40\textwidth}@{}}
\toprule
\raggedright Design and budget & \raggedright Target and query & \raggedright Conclusion\tabularnewline
\midrule
\raggedright General full rank; $s=m-1$ & \raggedright Any SPD target; any nonzero PSD query
& \raggedright Envelope and exact attainment test. The sharp value is smaller when the test fails.\tabularnewline[3pt]
\raggedright Balanced replicas; $d\le s<nd$ & \raggedright Positive scalar target; any nonzero PSD query
& \raggedright Sharp value and complete maximizing space at every stated budget.\tabularnewline[3pt]
\raggedright Existing real ETF; $s=m-2$ & \raggedright Positive scalar target; any nonzero PSD query
& \raggedright Flat row energy iff every residual maximizes. Exact value and space under flatness.\tabularnewline[3pt]
\raggedright Existing real ETF; $s=m-3$ & \raggedright Positive scalar target; isotropic query; $d,m-d\ge3$
& \raggedright Sharp value and complete residual maximizing space under unequal triangle weights.\tabularnewline
\bottomrule
\end{tabular}
\end{table}

The geometry has two further uses.
First, it guides estimator choice on the same sampled rows.
We mix calibrated ridge with a Horvitz--Thompson fit.
One feature-selected interval of positive weights improves the sharp risk
for every nonzero positive semidefinite query at a balanced base.
The maximizing space remains unchanged there.
This does not contradict calibration uniqueness: the mixture lies outside
the matched ridge family.
We also solve the response optimization under a lower bound on the fraction
of Full loss carried by group means. The resulting phase boundary states when all-query improvement
is possible within the chosen weight interval, which keeps the contrast
gap positive.
An exact adverse query gives the corresponding failure mechanism.

Second, the full maximizing space controls what happens when features change.
We recalibrate the penalty and update the law, selected fits and Full loss.
Reoptimizing over the complete old maximizing space has quadratic restriction
regret near a balanced base. Keeping only one old maximizing vector can incur
linear regret. Enlarging the space to all base contrasts gives a radius that
is uniform over normalized positive semidefinite queries.
A separate argument preserves the mixture's gain with error proportional to
both the feature change and the mixture weight.
This conclusion compares two moving sharp optima.
It does not require their maximizing responses to coincide.

\paragraph{Mathematical contributions.}
Calibration fixes the sampling procedure for the covariance question.
It follows from established mean identities and duality after a spectral reduction.
The main technical work proves strict sector inequalities and resolves
response equality in the stated balanced, LOO and ETF regimes.
The mixture and mean-share results turn this geometry into an estimator comparison.
The perturbation results then control the fully recalibrated procedure.
The mean-map, Rayleigh-quotient and generic spectral tools are standard.
The required covariance operators, sign arguments and coupled estimates
are specific to the sampling procedure studied here.

\paragraph{Foundations and related work.}
The sampling law is a fixed-size determinantal point process
\citep{kulesza2012}. Regularized determinant designs are established
\citep{derezinski2020design}.
A selected-inverse identity gives the fixed-penalty ridge mean
\citep{schreurs2020}. We invert that mean at a hard row budget using
finite exponential-family duality \citep{wainwright2008}.
Exact ridge debiasing also exists for surrogate sketches with random row
count \citep{derezinski2020debiasing}.
Here the number of distinct original rows is exact, and the same penalty
enters sampling and fitting. Fixed-size DPPs with prescribed row inclusions
also have a survey-sampling precedent \citep{loonis2019}.
Our calibration instead matches the full ridge response operator.

Volume sampling has exact fixed-response OLS identities
\citep{derezinski2018}. Normalized worst-response OLS design is studied by
\citet{derezinski2019minimax}, and OLS covariance contact by
\citet{rhee2026phase}.
Our denominator is the positive penalized Full ridge loss.
Our covariance is centered over the subset draw with deterministic responses.
Classical frame erasure results provide the pair and triangle geometry
\citep{goyal2001,bodmann2005}.
They concern different losses from the centered ridge risk studied here.
Likewise, Horvitz--Thompson weighting \citep{horvitz1952}, estimator averaging
\citep{lavancier2016}, and spectral perturbation \citep{yu2015}
provide foundations for our consequences.
Appendix~\ref{app:sources} makes these source relations explicit.

\paragraph{Reading order.}
Section~\ref{sec:calibration} establishes the matched sampling procedure.
Section~\ref{sec:general} defines normalized covariance risk, compares
feasible interior budgets, and gives the general leave-one-out equality test.
Section~\ref{sec:geometry} proves the all-budget balanced classification.
Section~\ref{sec:frames} treats two deletion budgets on existing real ETFs.
Sections~\ref{sec:mixture} and~\ref{sec:transfer} develop the estimator
and perturbation consequences. The appendices contain exact sign
certificates, explicit bounds, finite floating-point evaluations, and
source comparisons.

\section{Exact target calibration}\label{sec:calibration}
\subsection{Fixed target and fixed row budget}\label{sec:setup}
Let $X\in\mathbb R^{m\times d}$ have full column rank, $d\ge1$, with rows
$x_i^\top$, and put $G=X^\top X$. The response $y\in\mathbb R^m$
is fixed. Fix a positive definite target penalty $A_0$ and define
\begin{align}
 w_0&=(G+A_0)^{-1}X^\top y,\\
 J_0(X,y)&=\min_w\{\lVert y-Xw\rVert^2+w^\top A_0w\}
          =y^\top K_Xy,\\
 K_X&=(I+XA_0^{-1}X^\top)^{-1}.
\end{align}
Thus $J_0(X,y)>0$ for $y\ne0$. This denominator is the minimized
penalized full-data objective.

For an integer $1\le s\le m$ and a positive definite matrix $R$, draw
exactly $s$ distinct original rows using
\begin{equation}\label{eq:matched-law}
\begin{aligned}
 p_{s,R}(S)&=\frac{\det(X_S^\top X_S+R)}{Z_s(G,R)},\\
 w_S&=(X_S^\top X_S+R)^{-1}X_S^\top y_S.
\end{aligned}
\end{equation}
Write $G_S=X_S^\top X_S$. Here $Z_s$ sums the numerator over all
size-$s$ subsets. The same $R$
enters the law and the fit. Only $S$ is random. We call $R$ calibrated
when $\mathbb E w_S=w_0$ for every $y$, with $R$ independent of $y$.

\subsection{The inverse calibration theorem}
The law in \eqref{eq:matched-law} is a fixed-size determinantal point
process with $L$-ensemble kernel $I+XR^{-1}X^\top$
\citep[Section~5.1]{kulesza2012}.
Indeed, its principal determinant on $S$, multiplied by $\det R$, is
$\det(X_S^\top X_S+R)$. We ask which matched penalty recovers the
prescribed full-data operator at an exact original-row budget.

\begin{theorem}[Exact calibration]\label{thm:calibration}
Let $X$ have full column rank and $A_0\succ0$. For $1\le s\le m$,
a calibrated $R_s\succ0$ exists if and only if
\begin{equation}\label{eq:feasibility}
 s>d_{\mathrm{eff}}(A_0):=\operatorname{tr}\{G(G+A_0)^{-1}\}.
\end{equation}
It is unique among all positive definite matrices and depends only on
$G,A_0,m,s$. At $s=m$, $R_m=A_0$. For every feasible $s<m$,
\begin{equation}\label{eq:penalty-upper}
 0\prec R_s\preceq \frac{s}{m}A_0\prec A_0.
\end{equation}
If $d\le s<m$, then
\begin{equation}\label{eq:penalty-sandwich}
 \frac{s-d+1}{m-d+1}A_0\preceq R_s\preceq\frac{s}{m}A_0.
\end{equation}
Both inequalities in \eqref{eq:penalty-sandwich} are strict when
$d>1$. If $A_0$ is scalar, $R_s$ commutes with $G$.
\end{theorem}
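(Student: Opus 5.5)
The plan is to reduce calibration to a moment-matching problem for a finite exponential family on size-$s$ subsets.

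\emph{Mean identity.} First I would invoke the selected-inverse identity of \citet{schreurs2020} for the fixed-size DPP with ensemble $I+XR^{-1}X^\top$. Writing $(G_S+R)^{-1}X_S^\top=X_S^\top(I+X_SR^{-1}X_S^\top)^{-1}$ shows that $\mathbb E w_S$ is linear in $y$ through an expected padded inverse of the principal submatrix $(I+XR^{-1}X^\top)_S$. I expect that identity to put the mean in the form $\mathbb E w_S=(G+\Phi_s(R))^{-1}X^\top y$ for an explicit positive definite $\Phi_s(R)$ determined by $G,R,m,s$. Since $X$ has full column rank, $X^\top$ maps onto $\mathbb R^d$. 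Hence calibration for every $y$ is equivalent to the single matrix equation $\Phi_s(R)=A_0$, which depends only on $G,A_0,m,s$. At $s=m$ the sample is deterministic, so $\Phi_m(R)=R$ and $R_m=A_0$.

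\emph{Exponential-family reformulation.} Next I would write $\log\det(G_S+R)$ through the dual variable $R$, so that the law $p_{s,R}$ is a member of a finite exponential family indexed by $R\succ0$. The log-partition function is $\log Z_s(G,R)$. Its gradient in $R$ is $\mathbb E(G_S+R)^{-1}$, a mean parameter. The equation $\Phi_s(R)=A_0$ should be equivalent, after clearing inverses, to matching this mean parameter with a value computed from $A_0$ and $G$. By the standard duality of \citet{wainwright2008}, a solution exists exactly when that value lies in the relative interior of the convex hull of the sufficient statistics. The solution is unique because $\log Z_s$ is strictly convex along the relevant directions.

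\emph{Feasibility and the main obstacle.} The hard step is to identify that interior condition with $s>d_{\mathrm{eff}}(A_0)$. I would take the trace of the matching equation against $G$. The quantity $\operatorname{tr}\{G_S(G_S+R)^{-1}\}$ is strictly less than $|S|=s$, and it is at most $d$. So the trace constraint reads $\mathbb E\operatorname{tr}\{G_S(G_S+R)^{-1}\}=d_{\mathrm{eff}}(A_0)$ or a close variant. This gives necessity at once. For sufficiency I would show that the remaining boundary directions impose nothing beyond $s>d_{\mathrm{eff}}$, working in the $A_0$-whitened frame $A_0^{-1/2}GA_0^{-1/2}$. When $A_0$ is scalar I expect to diagonalize, which reduces the problem to scalar eigenvalue equations. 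That diagonalization also shows $R_s$ commutes with $G$, since uniqueness forces $R_s$ to respect the symmetries of $G$. For general $A_0$ I would rely on the convex-duality argument rather than on diagonalization.

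\emph{Bounds.} For \eqref{eq:penalty-upper} I would use monotonicity of $\Phi_s$ in $R$ together with the comparison $\Phi_s(\tfrac{s}{m}A_0)\succeq A_0$. That comparison should follow from $\mathbb E$ of the selected inverse dominating the rescaled full inverse, a Jensen/operator-convexity step applied to $(G_S+R)^{-1}$. The lower sandwich bound \eqref{eq:penalty-sandwich} for $d\le s$ would come from the reverse comparison at $R=\frac{s-d+1}{m-d+1}A_0$. I would establish that comparison through the volume-sampling identity $\mathbb E\,G_S$-type scaling with factor $(s-d+1)/(m-d+1)$ from \citet{derezinski2018}. Strictness for $d>1$ would follow because the Jensen step is strict whenever $G_S$ is genuinely random.
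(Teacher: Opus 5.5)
Your necessity argument is correct and more direct than the paper's. Applying calibration to $y=Xv$ gives $\mathbb E[(G_S+R)^{-1}G_S]=(G+A_0)^{-1}G$. Since $\operatorname{tr}\{G_S(G_S+R)^{-1}\}<s$ pointwise, taking traces gives $d_{\mathrm{eff}}(A_0)<s$. The $s=m$ case is also fine.

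\textbf{Gap in existence and uniqueness.} The law $p_{s,R}$ is not an exponential family with natural parameter $R$, because $\log\det(G_S+R)$ is not linear in $R$.
\begin{itemize}
\item $\nabla_R\log Z_s=\mathbb E(G_S+R)^{-1}$ is not the mean of a fixed sufficient statistic.
\item $\log Z_s$ need not be convex in $R$. For $d=1$, $Z_s=\binom{m}{s}r+\binom{m-1}{s-1}g$, so $\log Z_s$ is strictly concave in $r$.
\item Given the mean identity, the relation above shows calibration is equivalent to $\mathbb E(G_S+R)^{-1}=(G+A_0)^{-1}A_0R^{-1}$. Its right side involves the unknown $R$, so there is no fixed target mean parameter.
\end{itemize}
So neither the convex-hull existence criterion nor a strict-convexity uniqueness argument is available in your parametrization. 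Sufficiency of $s>d_{\mathrm{eff}}$, which you correctly flag as the hard step, is therefore unproved. Your commutation argument for scalar $A_0$ inherits this gap, since it presupposes uniqueness.

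\textbf{The missing idea is a spectral reduction, whitening by $G$ rather than $A_0$.} Put $C=G^{-1/2}RG^{-1/2}=U\operatorname{diag}(c_j)U^\top$, $q_j=1/c_j$, and $D=G^{1/2}(G+A_0)^{-1}G^{1/2}$.
\begin{itemize}
\item The mean identity gives $Z_s(I,C)=\det(C)P_s(q)$. Calibration becomes $\Phi_s(C)=D$, where $\Phi_s(C)=U\operatorname{diag}(\rho_j(q))U^\top$.
\item Here $\rho$ is the mean indicator vector of the auxiliary family on $J\subseteq[d]$ with mass $\binom{m-|J|}{s-|J|}\prod_{j\in J}q_j$. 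With natural parameters $\log q_j$, this is where exponential-family duality genuinely applies.
\item Its mean set is the truncated cube $\{0\le\rho_j\le 1,\ \sum_j\rho_j\le s\}$. Since $0\prec D\prec I$ holds automatically, the only remaining interior condition is $\operatorname{tr}D=d_{\mathrm{eff}}<s$.
\item Because $\Phi_s(C)$ is a spectral function of $C$, every solution commutes with $D$.
\item The exchange identity for $\rho_i-\rho_j$ forces equal $q$'s on repeated eigenvalues of $D$. This yields uniqueness among all SPD matrices, including non-commuting candidates.
\end{itemize}

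\textbf{Gap in the bounds.} Your step ``$\Phi_s$ is monotone and $\Phi_s(\tfrac{s}{m}A_0)\succeq A_0=\Phi_s(R_s)$, hence $R_s\preceq\tfrac{s}{m}A_0$'' needs the inverse map to be order-preserving. Monotonicity of a matrix map does not give that; for example, $X\mapsto X^{1/2}$ is monotone but its inverse is not. The Jensen and volume-sampling steps are also unsupported. Under the regularized determinantal law, $\mathbb E G_S\ne\tfrac{s}{m}G$, and the $R=0$ identity does not describe the law at $R\succ0$.

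The paper instead uses that $C$ commutes with the whitened target $G^{-1/2}A_0G^{-1/2}=D^{-1}-I$, so every comparison is eigenvalue-wise. The odds identity $\rho_j/(1-\rho_j)=q_jA_j/B_j$ then writes each calibrated-to-target eigenvalue ratio as a weighted average of $(s-|J|)/(m-|J|)$ over $|J|\le d-1$. This gives both bounds. Strictness for $d>1$ follows from the positive weights at $|J|=0$ and $|J|=d-1$, whose ratios differ.
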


The strict threshold can allow $s<d$, so sampled designs need not have
full column rank. Uniqueness concerns the joint law-and-fit requirement.
It is not identification of a determinantal kernel from subset
probabilities. Exact ridge debiasing with scaled regularization and an
effective-dimension threshold also occurs in random-size surrogate
sketches \citep[Theorems~1 and~6]{derezinski2020debiasing}.
Here the number of distinct, unrescaled original rows is exactly $s$.

The calibration theorem follows from two established results after a
spectral reduction. The forward mean formula follows from the fixed-size
selected-inverse identity of \citet[Lemma~1 and Corollary~1]{schreurs2020}.
The inverse mean map is a direct finite-support instance of
\citet[Proposition~3.2 and Theorem~3.3]{wainwright2008}.
The reduction identifies the spectral support and its feasible mean set.
The budget threshold, matrix uniqueness and penalty bounds then follow.
We give the derivation to make these steps and their assumptions explicit.
A change of coefficient coordinates reduces any SPD target to the identity
penalty. We retain the matrix form to state the answer in the original coordinates.

All gradients of functions of a symmetric matrix use the trace inner
product. We first establish the needed mean formula without assumptions
on simultaneous diagonalization.

\begin{lemma}[Determinant and mean identities]\label{lem:mean-identity}
For $R\succ0$,
\begin{align}
 \sum_{s=0}^m t^sZ_s(G,R)
   &=(1+t)^m\det\left(R+\frac{t}{1+t}G\right),\label{eq:partition-generating}\\
 \mathbb E_{s,R}w_S
   &=\{\nabla_G\log Z_s(G,R)\}X^\top y.\label{eq:mean-identity}
\end{align}
\end{lemma}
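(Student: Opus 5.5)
Both identities rest on expanding $\det(X_S^\top X_S+R)$ over row subsets, together with Jacobi's formula for the gradient of a log-determinant.

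\textbf{The generating function \eqref{eq:partition-generating}.} First I will use $\det(R+X_S^\top X_S)=\det R\,\det(I_S+X_SR^{-1}X_S^\top)$, the principal minor of $L=I+XR^{-1}X^\top$ on $S$. Summing principal minors weighted by $t^{|S|}$ gives
\[
\sum_s t^sZ_s=\det R\,\det(I+tL).
\]
Next I compute
\[
\det(I+tL)=\det\bigl((1+t)I+tXR^{-1}X^\top\bigr)=(1+t)^m\det\Bigl(I+\tfrac{t}{1+t}XR^{-1}X^\top\Bigr).
\]
Sylvester's identity converts the last determinant to $\det(I_d+\tfrac{t}{1+t}R^{-1}G)$. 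Multiplying by $\det R$ gives the claim.

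A more direct route is also available. Write $\det(R+\sum_{i\in S}x_ix_i^\top)$ via multilinearity: it is a sum over subsets $T\subseteq S$ with $|T|\le d$ of terms depending only on $T$, namely mixed discriminants. Then count how many $S$ of size $s$ contain a given $T$. Either route works; I will present the DPP minor route.

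\textbf{The mean identity \eqref{eq:mean-identity}.} The difficulty is that $Z_s$ is a function of $G$ only through the rows. So I must justify treating $\nabla_G$ as a genuine gradient in a symmetric-matrix variable. The plan is to use \eqref{eq:partition-generating} to define $Z_s(G,R)$ as a polynomial in an arbitrary symmetric $G$: it is the coefficient of $t^s$ on the right-hand side, which is polynomial in the entries of $G$. This also shows $Z_s$ depends on $X$ only through $G$.

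Next I will compute a derivative in a way that uses subsets. The key elementary identity is Jacobi's formula,
\[
\nabla_M\det M=\det(M)\,M^{-1}.
\]
For a subset $S$, the quantity $\det(G_S+R)(G_S+R)^{-1}X_S^\top y_S$ is what we need to sum. I will express it as a derivative with respect to $G$ of a subset-sum. Introduce the perturbation $G\mapsto G+E$ with $E$ symmetric, and use the representation
\[
\sum_{|S|=s}\det(G_S+R)=\text{coefficient extraction of }\det R\det(I+tL).
\]
To connect the gradient with the sum over $S$ of $\det(G_S+R)(G_S+R)^{-1}G_S$, I will use the scaling trick: replace $R$ by $R_\lambda$ and compare. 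A cleaner approach is to note that $Z_s(G,R)$ is the coefficient of $t^s$ in $(1+t)^m\det(R+\tfrac{t}{1+t}G)$, which depends on $G$ only through $\det(R+uG)$. Differentiating gives $u\det(R+uG)(R+uG)^{-1}$. Term by term, I will show that
\[
\sum_{|S|=s}\det(R+G_S)(R+G_S)^{-1}X_S^\top y_S=\bigl(\nabla_G Z_s\bigr)X^\top y.
\]
This uses the selected-inverse identity of \citet[Lemma~1]{schreurs2020}. That identity states that for DPP sampling, $\mathbb E[(G_S+R)^{-1}X_S^\top P_S]$ equals the corresponding full expression. In our notation it says the left side is linear in $X^\top y$ through a $d\times d$ matrix $M_s$.

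I then identify $M_s$ with $\nabla_G Z_s$ by testing with $y=Xv$. This turns the left side into $\sum_S\det(R+G_S)(R+G_S)^{-1}G_S v$. The key step is to show $\sum_S \det(R+G_S)(R+G_S)^{-1}G_S=(\nabla_GZ_s)\,G$ for all designs. That identity follows from the homogeneity relation $\tfrac{d}{d\epsilon}Z_s((1+\epsilon)G)$ done row-wise, i.e. differentiating in each row scaling $x_i\mapsto(1+\epsilon_i)x_i$.

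\textbf{Main obstacle.} I expect the main obstacle to be that identification of $M_s$, in the case where $G$ is singular relative to the test responses. I will handle it by working first with response $y$ in the column space of $X$. Responses $y\perp\operatorname{col}X$ need separate treatment, and there I will invoke the Schreurs et al. lemma directly to show the residual part contributes nothing beyond $M_sX^\top y$. Dividing by $Z_s$ then gives \eqref{eq:mean-identity}.
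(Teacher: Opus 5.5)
Your proof of \eqref{eq:partition-generating} is correct. It takes a different route from the paper's: you use the $L$-ensemble identity $\sum_S t^{|S|}\det L_S=\det(I+tL)$ and then Sylvester's identity, whereas the paper expands $\det(R+\sum_{i\in S}x_ix_i^\top)$ into row minors and counts the $\binom{m-k}{s-k}$ size-$s$ supersets of each. Your consequence, that $Z_s$ is a fixed polynomial function of $G$ alone, is the right starting point for \eqref{eq:mean-identity}.

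The argument you then give for \eqref{eq:mean-identity} has a genuine gap. The identity you call key, $\sum_S\det(R+G_S)(R+G_S)^{-1}G_S=(\nabla_GZ_s)G$, is a $d\times d$ matrix identity. A row scaling $x_i\mapsto(1+\epsilon_i)x_i$ moves $G$ only along $x_ix_i^\top$, so differentiating yields just the $m$ scalar relations
\[
 \sum_{S\ni i}\det(R+G_S)\,x_i^\top(R+G_S)^{-1}x_i=x_i^\top(\nabla_GZ_s)x_i .
\]
Global homogeneity of $Z_s((1+\epsilon)G)$ yields only their sum, which is a trace identity. Neither determines the matrix. Moreover, the fact that $\sum_S\det(R+G_S)(R+G_S)^{-1}X_S^\top y_S$ depends on $y$ only through $X^\top y$ is exactly the nontrivial content of \eqref{eq:mean-identity}; in particular it must vanish when $X^\top y=0$. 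You import this from a loosely stated citation, and your promised ``separate treatment'' of residual responses contains no argument. The obstacle you name is also misplaced: under full column rank $G$ is invertible, and the difficulty lies in the residual directions, not in singularity of $G$.

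The repair is short, and it makes both the citation and the test responses $y=Xv$ unnecessary. Differentiate along an arbitrary row direction $x_i\mapsto x_i+\epsilon u$ with $u\in\mathbb R^d$, not just a scaling. Then $G$ moves by $\epsilon(ux_i^\top+x_iu^\top)+O(\epsilon^2)$, and only subsets containing $i$ change. The chain rule on the total side and Jacobi's formula on each affected subset give $2u^\top(\nabla_GZ_s)x_i=2\sum_{S\ni i}\det(R+G_S)\,u^\top(R+G_S)^{-1}x_i$ for every $u$. That is,
\[
 \sum_{S\ni i}\det(R+G_S)(R+G_S)^{-1}x_i=(\nabla_GZ_s)x_i .
\]
Multiplying by $y_i$, summing over $i$, and dividing by $Z_s$ gives \eqref{eq:mean-identity} for every $y$, residuals included.

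This is essentially the paper's proof, except that the paper uses the one-sided perturbation $x_ix_i^\top\mapsto x_i(x_i+ty_iv)^\top$. That version relies on the minor-counting expansion holding for nonsymmetric rank-one terms, which the paper's proof of \eqref{eq:partition-generating} supplies. Your symmetric version needs only that $Z_s$ is a function of the symmetric matrix $G$.

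If you prefer to keep your identification step, the column change $X\mapsto X(I+\epsilon B)$ with arbitrary $B\in\mathbb R^{d\times d}$ does prove your matrix identity. The residual part would then still rest on the citation.
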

\begin{proof}
Expand the determinant into distinct rank-one row contributions. A term
using $k$ rows belongs to $\binom{m-k}{s-k}$ size-$s$ subsets. Summing in
$s$ proves \eqref{eq:partition-generating}. For the mean, allow nonsymmetric
rank-one contributions and replace $x_ix_i^\top$ by
$x_i(x_i+t y_i v)^\top$. On a selected subset the derivative at zero is
$\det(G_S+R)v^\top(G_S+R)^{-1}X_S^\top y_S$. On the total Gram matrix
the derivative is $v^\top(\nabla_G Z_s)X^\top y$. Equating these
derivatives and dividing by $Z_s$ proves \eqref{eq:mean-identity}.
\end{proof}

For $q_j>0$, put $c_k=\binom{m-k}{s-k}$, with impossible
binomial coefficients equal to zero, and define
\begin{equation}\label{eq:cal-partition}
 P_s(q)=\sum_{J\subseteq[d]}c_{|J|}\prod_{j\in J}q_j,
 \qquad \rho_j=q_j\partial_{q_j}\log P_s(q).
\end{equation}
Under the auxiliary law with mass proportional to
$c_{|J|}\prod_{j\in J}q_j$, $\rho_j$ is the probability that $j\in J$.
These are spectral-coordinate means, not original-row inclusion
probabilities.

\begin{lemma}[Finite inverse moment map]\label{lem:moment-map}
Let $1\le s\le m$ and $m\ge d$. The gradient of
$A_s(z)=\log P_s(e^z)$, with $P_s$ in \eqref{eq:cal-partition}, is a
bijection onto
$\{\rho:0<\rho_j<1,\ \sum_j\rho_j<s\}$.
\end{lemma}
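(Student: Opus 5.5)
This is the standard finite exponential-family duality: the gradient of the log-partition function of a minimal family is a bijection onto the interior of the convex hull of its sufficient statistics \citep[Proposition~3.2 and Theorem~3.3]{wainwright2008}. So the plan is to write $A_s$ as such a log-partition function, check minimality, and compute the interior of the marginal polytope explicitly.

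\emph{Step 1: the exponential family.} The sample space is $\{J\subseteq[d]:c_{|J|}>0\}$. The base measure is $c_{|J|}$ and the sufficient statistic is the indicator vector $\mathbf 1_J\in\{0,1\}^d$, so $A_s(z)=\log\sum_J c_{|J|}e^{\langle z,\mathbf 1_J\rangle}$. Since $c_k=\binom{m-k}{s-k}$ is positive exactly when $k\le s$ (here $k\le d\le m$), the support is $\mathcal F=\{J:|J|\le s\}$. This includes $\emptyset$ and every singleton because $s\ge1$. The vectors $\mathbf 1_J-\mathbf 1_\emptyset=\mathbf 1_J$ therefore span $\mathbb R^d$, so the statistics satisfy no affine relation and the family is minimal. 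Consequently $A_s$ is strictly convex and $\nabla A_s$ is injective.

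\emph{Step 2: surjectivity.} The Wainwright--Jordan theorem gives $\nabla A_s(\mathbb R^d)=\operatorname{int}\mathcal M$, where $\mathcal M=\operatorname{conv}\{\mathbf 1_J:|J|\le s\}$. If one prefers to avoid citing it, the direct argument runs as follows. For $\rho\in\operatorname{int}\mathcal M$, the function $z\mapsto A_s(z)-\langle\rho,z\rangle$ is coercive. This holds because for each direction $u\ne0$ some vertex satisfies $\langle \mathbf 1_J,u\rangle>\langle\rho,u\rangle$, and $A_s(tu)$ grows like $t\max_J\langle\mathbf 1_J,u\rangle$. The function therefore attains a minimum, and at the minimizer $\nabla A_s=\rho$. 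Conversely, every gradient is a strictly positive average of all the vertices, so it lies in the relative interior, which is the interior since $\mathcal M$ is full-dimensional.

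\emph{Step 3: the polytope, the main obstacle.} It remains to show
\[
\operatorname{int}\mathcal M=\{\rho:0<\rho_j<1,\ \textstyle\sum_j\rho_j<s\}.
\]
First I show $\mathcal M=Q:=\{0\le\rho_j\le1,\ \sum\rho_j\le s\}$. The inclusion $\mathcal M\subseteq Q$ is clear. For the reverse, note that $Q$ is the intersection of the unit cube with a half-space whose defining row $\mathbf 1^\top$ together with the identity rows forms a totally unimodular (interval) matrix, and $s$ is an integer. Hence the vertices of $Q$ are integral, so each is some $\mathbf 1_J$ with $|J|\le s$. One can also see this directly: a vertex has $d$ tight constraints, and if the sum constraint is tight with some coordinate fractional, integrality of $s$ forces at least two fractional coordinates, which can be shifted in opposite directions while staying in $Q$, contradicting extremality.

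Next I take interiors. If $s\ge d$, the sum constraint is implied by the box constraints, and the strict condition $\sum\rho_j<s$ holds automatically whenever all $\rho_j<1$, so the displayed set is still correct. If $s<d$, the interior of a full-dimensional polytope given by linear inequalities is obtained by making them strict, which again gives the displayed set. Combining Steps 1–3 shows that $\nabla A_s$ is a bijection onto the stated set.
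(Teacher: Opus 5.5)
Your proposal is correct and follows essentially the same route as the paper: identify the support as $\{J:|J|\le s\}$, show its hull is the truncated cube by a vertex argument, get a positive definite Hessian from the empty set and singletons, note that every finite parameter gives an interior mean, and obtain surjectivity from coercivity of $A_s(z)-\rho^\top z$. The paper makes the coercivity uniform via a ball of radius $\delta$ about $\rho$, giving $A_s(z)-\rho^\top z\ge\delta\lVert z\rVert+\min_k\log c_k$; this is a cleaner form of your ray-wise growth statement. Your total-unimodularity remark is a valid alternative to the paper's direct perturbation check of the vertices.
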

\begin{proof}
The distribution on $J\subseteq[d]$ with mass proportional to
$c_{|J|}\exp(\sum_{j\in J}z_j)$ has support all zero-one vectors with at
most $s$ ones. Its convex hull is
$\mathcal C_s=\{\rho:0\le\rho_j\le1,\ \sum_j\rho_j\le s\}$.
To check the hull, any vertex with two fractional coordinates admits a
feasible perturbation; a vertex with one fractional coordinate cannot
satisfy an active integral sum constraint and also admits a perturbation.
Thus every vertex is a stated zero-one vector.

The gradient of $A_s$ is the mean indicator, and its Hessian is the
indicator covariance. A zero-variance linear combination must agree on
the empty set and every singleton, so all its coefficients are zero.
The Hessian is therefore positive definite. Every finite parameter gives
positive probability to every support point, so its mean is interior.
Conversely, for an interior $\rho$, a ball of radius $\delta>0$ about
$\rho$ lies in $\mathcal C_s$. The support function then gives
\[
 A_s(z)-\rho^\top z\ge\delta\lVert z\rVert+
             \min_{k:c_k>0}\log c_k.
\]
This function is coercive and strictly convex. Its unique minimizer
solves $\nabla A_s(z)=\rho$. This also proves injectivity. When a sum
constraint is redundant, the displayed strict inequality remains true
through $0<\rho_j<1$.
\end{proof}

\begin{proof}[Proof of Theorem~\ref{thm:calibration}]
Use coefficient coordinates $\widetilde w=G^{1/2}w$ and design
$\widetilde X=XG^{-1/2}$, whose Gram matrix is $I$. Write
\[
 C=G^{-1/2}RG^{-1/2},\qquad
 D=G^{1/2}(G+A_0)^{-1}G^{1/2},\qquad 0\prec D\prec I.
\]
If $C=U\operatorname{diag}(c_j)U^\top$, put $q_j=1/c_j$. The
determinant expansion gives $Z_s(I,C)=\det(C)P_s(q)$, and
Lemma~\ref{lem:mean-identity} gives the mean operator
\[
 \Phi_s(C)\widetilde X^\top,\qquad
 \Phi_s(C)=U\operatorname{diag}(\rho_j(q))U^\top.
\]
The desired operator is $D\widetilde X^\top$. Full row rank of
$\widetilde X^\top$ makes calibration equivalent to $\Phi_s(C)=D$.
Thus every solution $C$ commutes with $D$. In a simultaneous eigenbasis,
Lemma~\ref{lem:moment-map} gives exactly one positive eigenvalue vector
$q$ if and only if $\operatorname{tr}D<s$. Equal target eigenvalues
force equal $q$ coordinates, since swapping them preserves the unique
moment-map solution. There is no freedom to rotate a different penalty
inside a repeated eigenspace. This proves all-matrix uniqueness and
\eqref{eq:feasibility}. Equivalently, the exact identity
\[
 \rho_i-\rho_j=\frac{q_i-q_j}{P_s(q)}
 \sum_{J\subseteq[d]\setminus\{i,j\}}c_{|J|+1}\prod_{k\in J}q_k
\]
has a strictly positive final factor, whose empty term is $c_1>0$.
Unwhitening gives $R_s=G^{1/2}CG^{1/2}$. For scalar $A_0$, $D$ is a
spectral function of $G$, so $R_s$ commutes with $G$.

For the bounds, write $q_J=\prod_{k\in J}q_k$ and split
$P_s=B_j+q_jA_j$. Then
\[
 \frac{\rho_j}{1-\rho_j}=q_j\frac{A_j}{B_j},\qquad
 \frac{A_j}{B_j}
 =\frac{\sum_{J\not\ni j}c_{|J|}q_J (s-|J|)/(m-|J|)}
        {\sum_{J\not\ni j}c_{|J|}q_J}.
\]
Terms with zero coefficient are omitted. At $s<m$ all denominators here
are positive; this weighted average is at most $s/m$. For $s\ge d$ it lies between
$(s-d+1)/(m-d+1)$ and $s/m$. The eigenvalues of the whitened target
penalty are $(1-\rho_j)/\rho_j$, and the ratio of the calibrated to
target eigenvalues is $A_j/B_j$. Congruence proves
\eqref{eq:penalty-upper} and \eqref{eq:penalty-sandwich}. If $d>1$ and
$d\le s<m$, the positive weights at $|J|=0$ and $|J|=d-1$ have distinct
ratios, so both sandwich inequalities are strict. At $s=m$ the fit is deterministic; equality
of its operator with the target and full rank give $R_m=A_0$ directly.
\end{proof}

\subsection{Singleton and noncommuting examples}
\begin{corollary}[Two closed forms]\label{cor:calibration-closed}
If $s=1$ is feasible, then
\begin{equation}\label{eq:singleton-penalty}
 R_1=\frac{1-d_{\mathrm{eff}}(A_0)}{m}(G+A_0).
\end{equation}
For $d=1$ and $A_0=\lambda_0>0$, every budget has
$r_s=s\lambda_0/m$.
\end{corollary}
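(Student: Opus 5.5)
I will reduce both claims to the spectral moment map used in the proof of Theorem~\ref{thm:calibration}, where calibration is equivalent to $\rho_j(q)=\delta_j$ for the eigenvalues $\delta_j$ of $D=G^{1/2}(G+A_0)^{-1}G^{1/2}$, with $q_j=1/c_j$ and $c_j$ the eigenvalues of $C=G^{-1/2}RG^{-1/2}$. Since Theorem~\ref{thm:calibration} gives uniqueness, in each case it suffices to exhibit one $R\succ0$ whose moment vector equals the target eigenvalues.

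\textbf{The case $s=1$.} Here $c_0=\binom{m}{1}=m$, $c_1=\binom{m-1}{0}=1$, and $c_k=0$ for $k\ge2$. Hence
\[
 P_1(q)=m+\sum_j q_j,\qquad \rho_j=\frac{q_j}{m+\sum_k q_k}.
\]
Summing gives $\sum_j\rho_j=T/(m+T)$ with $T=\sum_k q_k$. Setting $\sum_j\rho_j=\operatorname{tr}D=d_{\mathrm{eff}}$ yields $T=m\,d_{\mathrm{eff}}/(1-d_{\mathrm{eff}})$, which is positive exactly under the feasibility condition $d_{\mathrm{eff}}<1$. Then $q_j=\delta_j(m+T)=m\delta_j/(1-d_{\mathrm{eff}})$, so $c_j=(1-d_{\mathrm{eff}})/(m\delta_j)$. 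In operator form,
\[
 C=\frac{1-d_{\mathrm{eff}}}{m}\,D^{-1}.
\]
Since $D^{-1}=G^{-1/2}(G+A_0)G^{-1/2}$, unwhitening gives $R_1=G^{1/2}CG^{1/2}=\frac{1-d_{\mathrm{eff}}}{m}(G+A_0)$. This is \eqref{eq:singleton-penalty}, with no commutation assumptions needed.

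\textbf{The case $d=1$.} Here $G=g>0$ is a scalar, and $P_s(q)=c_0+c_1q$, where $c_0=\binom{m}{s}$ and $c_1=\binom{m-1}{s-1}$, so $c_1/c_0=s/m$. Then
\[
 \rho=\frac{(s/m)q}{1+(s/m)q},
\]
and the target is $\delta=g/(g+\lambda_0)$. Solving $\rho=\delta$ gives $(s/m)q=g/\lambda_0$, i.e.\ $c=1/q=s\lambda_0/(mg)$, so $r_s=gc=s\lambda_0/m$. Feasibility is automatic because $\delta<1\le s$. Equivalently, the ratio $A_j/B_j$ in the proof of Theorem~\ref{thm:calibration} reduces to the single term $J=\emptyset$, which equals $s/m$.

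The only delicate point is bookkeeping: tracking which binomial coefficients vanish, and confirming that $d_{\mathrm{eff}}<1$ is what makes $T$, and hence $R_1$, positive. Neither case poses a real obstacle.
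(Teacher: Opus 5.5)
Your proposal is correct and matches the paper's proof. For $s=1$ you solve the moment equations from $P_1(q)=m+\sum_j q_j$ to obtain $C=(1-\operatorname{tr}D)D^{-1}/m$ and unwhiten by congruence with $G^{1/2}$; for $d=1$ you use $c_1/c_0=s/m$ in the odds equation, which are exactly the paper's two steps.
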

\begin{proof}
For $s=1$, $P_1(q)=m+\sum_jq_j$. Hence
$q_j=m\rho_j/(1-\sum_k\rho_k)$.
In the whitened coordinates of the theorem, this says
$C=(1-\operatorname{tr}D)D^{-1}/m$.
Congruence by $G^{1/2}$ proves \eqref{eq:singleton-penalty}.
For $d=1$, $P_s(q)=c_0+c_1q$ and $c_1/c_0=s/m$.
The odds equation in the theorem's proof gives $r_s=s\lambda_0/m$.
\end{proof}

Take $m=4$, rows $(1,0),(1,1),(0,1),(0,0)$, and
\[
 G=\begin{pmatrix}2&1\\1&2\end{pmatrix},\qquad
 A_0=\begin{pmatrix}3&1\\1&4\end{pmatrix}.
\]
Then $(G+A_0)^{-1}=\frac1{26}\begin{pmatrix}6&-2\\-2&5\end{pmatrix}$,
so $d_{\mathrm{eff}}=9/13<1$ and
$R_1=(G+A_0)/13$. Thus one sampled row can preserve this two-dimensional
target. Also $(GR_1-R_1G)_{12}=1/13\ne0$.
The penalty need not commute with $G$ when the target does not.
The simultaneous spectral reduction occurs after Gram whitening.

\section{General covariance structure and leave-one-out equality}\label{sec:general}
\subsection{Risk, response operators, and equality}
Write $\Sigma_s(y)=\operatorname{Cov}_S(w_S)$ under the calibrated law.
Fix a query metric $W\succeq0$ in the original coefficient coordinates.
We call this the physical query and hold it fixed as the response varies.
For $W=Q^\top Q$ and any fixed query response $y_Q$, calibration gives
\begin{equation}\label{eq:query-excess}
 \mathbb E\lVert Qw_S-y_Q\rVert^2-\lVert Qw_0-y_Q\rVert^2
 =\operatorname{tr}(W\Sigma_s(y)).
\end{equation}
Indeed, expand around $Qw_0-y_Q$. The cross term has expectation zero,
and the remaining quadratic term is the covariance trace.
Our sharp risk is
\begin{equation}\label{eq:sharp-risk}
 h_s(X,W)=\sup_{y\ne0}\frac{\operatorname{tr}(W\Sigma_s(y))}{J_0(X,y)}.
\end{equation}
We write $h_s(X)$ or $h_s$ when the fixed query is clear.
This supremum introduces no stochastic model for labels.

Each selected fit is linear in $y$. If $F_Sy=w_S$ and
$F_0=(G+A_0)^{-1}X^\top$, define the positive semidefinite response matrix
\begin{equation}\label{eq:response-operator}
 C_s(X,W)=\mathbb E(F_S-F_0)^\top W(F_S-F_0),\qquad
 \operatorname{tr}(W\Sigma_s(y))=y^\top C_s(X,W)y.
\end{equation}
This is an $m$-by-$m$ matrix for scalar query risk. The coefficient
covariance $\Sigma_s(y)$ is a different, $d$-by-$d$ matrix.
The normalized operator is
\begin{equation}\label{eq:normalized-operator}
 M_s(X,W)=K_X^{-1/2}C_s(X,W)K_X^{-1/2},\qquad
 h_s(X,W)=\lambda_{\max}(M_s(X,W)).
\end{equation}
For a proposed bound $\rho$, the standard quadratic-form test is
$\Delta_\rho=\rho K_X-C_s(X,W)\succeq0$.
A nonzero response attains that bound exactly when it belongs to
$\ker\Delta_\rho$. If this kernel is zero, the bound is strict.
The results below identify these operators and kernels under stated
sampling and design assumptions.

\subsection{A common energy across feasible budgets}
Before resolving equality at one budget, we ask whether zero variance can
occur on different response spaces at different feasible interior budgets.
For $d\ge2$, a common energy shows that these spaces are the same for
total covariance or a fixed positive definite query.
The comparison constants need not be sharp.

\begin{lemma}[Zero total variance]\label{lem:zero-variance}
Let $d\ge2$, $1\le s<m$, and $R\succ0$. Under any law positive on
every size-$s$ subset, the selected ridge fits satisfy
\[
 \operatorname{Cov}(w_S)=0
 \quad\Longleftrightarrow\quad x_i y_i=0\quad\text{for every }i.
\]
Thus the zero-variance responses are exactly those supported on zero rows.
This statement does not require calibration.
\end{lemma}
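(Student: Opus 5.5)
The plan is to reduce zero covariance to the statement that the selected fit is the same on every size-$s$ subset, and then to exploit the exchange structure available because $1\le s<m$. Since the law gives positive mass to every size-$s$ subset, $\operatorname{Cov}(w_S)=0$ holds exactly when $w_S=c$ for a single vector $c$ and every $S$ with $|S|=s$. This first step uses nothing beyond positivity of the law, which is why calibration plays no role.

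For the easy direction, suppose $x_iy_i=0$ for every $i$. Then $X_S^\top y_S=\sum_{i\in S}x_iy_i=0$ for every $S$. Hence $w_S=(G_S+R)^{-1}\cdot0=0$ is constant, and the covariance vanishes.

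For the converse, assume $w_S=c$ for all $S$. Multiplying by $G_S+R$, which is invertible because $R\succ0$, gives
\[
 \sum_{i\in S}v_i=Rc,\qquad v_i:=x_i\bigl(y_i-x_i^\top c\bigr),
\]
for every size-$s$ subset $S$. The right-hand side does not depend on $S$.

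Next comes the exchange step. Take any $i\ne j$. Because $1\le s\le m-1$, there is a size-$s$ subset $S$ with $i\in S$ and $j\notin S$. The subset $S'=S\setminus\{i\}\cup\{j\}$ is also admissible. Subtracting the identities for $S$ and $S'$ gives $v_i=v_j$. So all $v_i$ equal a common vector $v$, and $sv=Rc$.

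The remaining step is to show $v=0$, and this is where $d\ge2$ enters. Suppose $v\neq0$. Then each $x_i=v/(y_i-x_i^\top c)$ is a multiple of $v$, the scalar being nonzero because $v\neq0$. Every row of $X$ would then lie in $\operatorname{span}\{v\}$, so $\operatorname{rank}X\le1<d$. This contradicts the standing full-column-rank assumption of Section~\ref{sec:setup}. Hence $v=0$, so $Rc=sv=0$, and $R\succ0$ forces $c=0$. Substituting back, $v_i=x_iy_i=0$ for all $i$. This is exactly the claim that the zero-variance responses are those supported on zero rows.

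I expect this last step to be the only delicate point. When $d=1$ the rank contradiction is unavailable: a common nonzero $v$ with $x_i$ parallel to $v$ can genuinely occur, and zero-variance responses with $x_iy_i\neq0$ then exist. The proof should therefore state explicitly that it invokes full column rank together with $d\ge2$ at this point.
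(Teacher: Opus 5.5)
Your proof is correct and follows the paper's argument essentially step for step. Both arguments pass to a constant fit and use the normal-equation identity $\sum_{i\in S}v_i=Rc$, a single-index exchange (possible because $1\le s\le m-1$) to make all $v_i$ equal, and the observation that a common nonzero value would put every row on one line, contradicting full column rank with $d\ge2$. Your closing remark that the conclusion genuinely fails for $d=1$ is also accurate.
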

\begin{proof}
Zero covariance makes all fits the same vector $w$. Set
$v_i=x_i(y_i-x_i^\top w)$. The normal equations give
$\sum_{i\in S}v_i=Rw$ for every size-$s$ subset.
For any two indices, choose $s-1$ other indices and exchange the remaining
index. This shows that all $v_i$ agree. Their common value lies in every
row line. Those lines have zero intersection because the rows span at
least two dimensions. Hence $v_i=0$ and $Rw=0$, so $w=0$ and $x_iy_i=0$.
The converse follows because every selected right-hand side is then zero.
\end{proof}

Define the full-data residual and the common energy by
\begin{equation}\label{eq:common-energy}
 e_0=y-Xw_0,\qquad b_i=x_i(e_0)_i,\qquad
 \bar b=\frac{A_0w_0}{m},\qquad
 \mathcal E_0(y)=\sum_i\lVert b_i-\bar b\rVert^2.
\end{equation}
The full normal equation gives $\sum_i b_i=A_0w_0$.
For $d\ge2$, put
\[
 L_x^2=\max_i\lVert x_i\rVert^2,\qquad
 \mu_X=\frac{\operatorname{tr}G-\lambda_{\max}(G)}{L_x^2}>0.
\]
At each feasible interior budget define
\begin{align}\label{eq:energy-constants}
 a_s&=\frac{s(m-s)}{m(m-1)},&
 \kappa_s&=\frac{\det(G+R_s)}{\det R_s},&
 \Lambda_s&=\lambda_{\max}(G+R_s),\nonumber\\
 r_{\min,s}&=\lambda_{\min}(R_s),&
 \mathcal D_s&=\frac{s}{m}I-R_sA_0^{-1},\nonumber\\
 \underline c_s&=\frac{a_s}{\kappa_s\Lambda_s^2},&
 \overline c_s&=\frac{\kappa_s}{r_{\min,s}^2}
 \left(a_s+\frac{m^2\lVert \mathcal D_s\rVert_{\mathrm{op}}^2}{\mu_X}\right).
\end{align}
These symbols are scalar comparison constants, separate from $C_s$, $M_s$,
and the sharp risk $h_s$.

\begin{proposition}[Common-energy comparison]\label{prop:common-energy}
Let $d\ge2$. At every feasible $1\le s<m$,
\begin{equation}\label{eq:common-energy-bound}
 0<\underline c_s\le\overline c_s<\infty,\qquad
 \underline c_s\mathcal E_0(y)\le\operatorname{tr}\Sigma_s(y)
 \le\overline c_s\mathcal E_0(y).
\end{equation}
Consequently, for any two feasible interior budgets $s,t$,
\begin{equation}\label{eq:total-comparison}
 \frac{\underline c_s}{\overline c_t}\operatorname{tr}\Sigma_t(y)
 \le\operatorname{tr}\Sigma_s(y)
 \le\frac{\overline c_s}{\underline c_t}\operatorname{tr}\Sigma_t(y).
\end{equation}
For a fixed $W\succ0$, the same comparison holds for
$\operatorname{tr}(W\Sigma_s)$ after multiplying the lower constant by
$\lambda_{\min}(W)/\lambda_{\max}(W)$ and the upper constant by its
reciprocal. The energy $\mathcal E_0$ vanishes exactly on the zero-row
response space.
\end{proposition}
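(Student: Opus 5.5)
The plan is to write each centered fit error as a resolvent applied to a sum of sampled vectors. Then the calibrated law is compared with uniform sampling through the determinant ratio $\kappa_s$. Write $R=R_s$. The selected normal equation gives $(G_S+R)(w_S-w_0)=X_S^\top y_S-(G_S+R)w_0$. Expanding $X_S^\top y_S-G_Sw_0=\sum_{i\in S}x_i(y_i-x_i^\top w_0)$ turns this into
\[
 (G_S+R)(w_S-w_0)=\sum_{i\in S}b_i-Rw_0
 =\sum_{i\in S}(b_i-\bar b)+\mathcal D_sA_0w_0=:r_S ,
\]
where we used $s\bar b=(s/m)A_0w_0$. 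Calibration (Theorem~\ref{thm:calibration}) gives $\mathbb E w_S=w_0$, so $\operatorname{tr}\Sigma_s(y)=\mathbb E\lVert (G_S+R)^{-1}r_S\rVert^2$. Since $R\preceq G_S+R\preceq G+R$, the resolvent has operator norm between $\Lambda_s^{-1}$ and $r_{\min,s}^{-1}$. This gives
\[
 \Lambda_s^{-2}\,\mathbb E\lVert r_S\rVert^2\le\operatorname{tr}\Sigma_s(y)\le r_{\min,s}^{-2}\,\mathbb E\lVert r_S\rVert^2 .
\]

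Next I would compare the calibrated law with the uniform law $u$ on size-$s$ subsets. Each numerator satisfies $\det R\le\det(G_S+R)\le\det(G+R)$. The normalizer $Z_s$ is therefore squeezed between $\binom ms\det R$ and $\binom ms\det(G+R)$. Hence $\kappa_s^{-1}u(S)\le p_{s,R}(S)\le\kappa_s u(S)$, and this ratio bound transfers to expectations of nonnegative functions. Under $u$, put $c_i=b_i-\bar b$ and $v=\mathcal D_sA_0w_0$. Because $\sum_ic_i=0$, the cross term vanishes and
\[
 \mathbb E_u\lVert r_S\rVert^2=\mathbb E_u\Bigl\lVert\sum_{i\in S}c_i\Bigr\rVert^2+\lVert v\rVert^2 .
\]
The first term equals $a_s\sum_i\lVert c_i\rVert^2=a_s\mathcal E_0(y)$ by the standard without-replacement variance identity for centered vectors. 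The lower bound then follows by dropping $\lVert v\rVert^2$. The upper bound needs $\lVert v\rVert^2\le \lVert\mathcal D_s\rVert_{\mathrm{op}}^2 m^2\lVert\bar b\rVert^2$, using $A_0w_0=m\bar b$.

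The step I expect to carry the real content is the geometric inequality $\mathcal E_0(y)\ge\mu_X\lVert\bar b\rVert^2$. It is the only place where $d\ge2$ enters. Each $b_i$ lies on the line spanned by $x_i$. So $\lVert b_i-\bar b\rVert^2$ is at least the squared distance from $\bar b$ to that line. For $\bar b=\lVert\bar b\rVert\hat u$ this distance is $\lVert\bar b\rVert^2\bigl(1-(x_i^\top\hat u)^2/\lVert x_i\rVert^2\bigr)$, or $\lVert\bar b\rVert^2$ for a zero row. Either way it is at least $\lVert\bar b\rVert^2(\lVert x_i\rVert^2-(x_i^\top\hat u)^2)/L_x^2$. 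Summing over $i$ gives $\lVert\bar b\rVert^2(\operatorname{tr}G-\hat u^\top G\hat u)/L_x^2\ge\mu_X\lVert\bar b\rVert^2$. Moreover $\mu_X>0$ because full column rank with $d\ge2$ leaves a positive trace after removing the top eigenvalue. Combining this with the previous paragraph yields exactly $\overline c_s$.

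Finally, $0<\underline c_s\le\overline c_s<\infty$ is immediate, since $\kappa_s\ge1$ and $r_{\min,s}\le\Lambda_s$. The two-budget comparison \eqref{eq:total-comparison} follows by chaining the bounds at $s$ and $t$ through $\mathcal E_0$. For $W\succ0$, I would sandwich $\lambda_{\min}(W)\operatorname{tr}\Sigma\le\operatorname{tr}(W\Sigma)\le\lambda_{\max}(W)\operatorname{tr}\Sigma$. For the vanishing statement, $\mathcal E_0=0$ forces $\bar b=0$ by the geometric inequality. Then every $b_i=0$ and $A_0w_0=0$, so $w_0=0$, $e_0=y$, and $x_iy_i=0$. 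The converse is direct, and this matches Lemma~\ref{lem:zero-variance}.
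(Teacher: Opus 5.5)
Your proposal is correct and follows the paper's proof essentially step for step. It uses the same resolvent identity $(G_S+R_s)(w_S-w_0)=\sum_{i\in S}(b_i-\bar b)+\mathcal D_sA_0w_0$, the same $\kappa_s$ comparison with the uniform law, the same without-replacement variance identity, and the same row-line projection bound $\mathcal E_0\ge\mu_X\lVert\bar b\rVert^2$. The one small difference is the vanishing set of $\mathcal E_0$: you derive it directly from the projection bound, whereas the paper obtains it from Lemma~\ref{lem:zero-variance} through the two-sided comparison. Both routes are valid.
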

\begin{proof}
Set $\delta_S=w_S-w_0$. The selected normal equation gives
\[
 (G_S+R_s)\delta_S=\sum_{i\in S}b_i-R_sw_0=:t_S.
\]
Let $U_s$ be the uniform law on size-$s$ subsets. Write
$t_S=\sum_{i\in S}(b_i-\bar b)+[(s/m)A_0-R_s]w_0$.
Uniform inclusion probabilities are $s/m$ for one index and
$s(s-1)/[m(m-1)]$ for two distinct indices. Since
$\sum_i(b_i-\bar b)=0$, expansion yields
\begin{equation}\label{eq:uniform-energy}
 \mathbb E_{U_s}\lVert t_S\rVert^2
 =a_s\mathcal E_0+\lVert[(s/m)A_0-R_s]w_0\rVert^2.
\end{equation}
For a nonzero row, $b_i$ belongs to its row line. Projection onto the
orthogonal complement of that line therefore gives
\[
 \lVert b_i-\bar b\rVert^2\ge
 \frac{\lVert x_i\rVert^2\lVert\bar b\rVert^2-(x_i^\top\bar b)^2}{L_x^2}.
\]
For a zero row the right side is zero. Summing and using the largest
eigenvalue of $G$ gives
\[
 \mathcal E_0\ge\mu_X\lVert\bar b\rVert^2,\qquad
 \lVert A_0w_0\rVert^2\le\frac{m^2}{\mu_X}\mathcal E_0.
\]
Thus \eqref{eq:uniform-energy} lies between $a_s\mathcal E_0$ and
$(a_s+m^2\lVert \mathcal D_s\rVert_{\mathrm{op}}^2/\mu_X)\mathcal E_0$.

The order bounds $R_s\preceq G_S+R_s\preceq G+R_s$ imply
\[
 \frac{\lVert t_S\rVert}{\Lambda_s}\le\lVert\delta_S\rVert
 \le\frac{\lVert t_S\rVert}{r_{\min,s}}.
\]
Every determinant weight lies between $\det R_s$ and $\det(G+R_s)$.
Its ratio to the average weight therefore lies between
$\kappa_s^{-1}$ and $\kappa_s$, so
$\kappa_s^{-1}\le p_{s,R_s}(S)/U_s(S)\le\kappa_s$.
Calibration gives $\mathbb E_p\delta_S=0$ and hence
$\operatorname{tr}\Sigma_s=\mathbb E_p\lVert\delta_S\rVert^2$.
Combining the bounds proves \eqref{eq:common-energy-bound}.
Combining budgets proves \eqref{eq:total-comparison}; the extreme
eigenvalues of $W$ prove its query version.
Lemma~\ref{lem:zero-variance} identifies the common zero space.
\end{proof}

\paragraph{Why singular queries need a separate analysis.}
Take two copies of each coordinate row $e_1,e_2$, fix
$A_0=(54/13)I$, and put $y=(1,1,1,1)^\top$, $u=(1,1)^\top$.
The calibrated penalties at sizes two and three are
\[
 R_2=r_2I,\quad r_2=\frac{7+\sqrt{283}}{13},\qquad R_3=3I.
\]
Here the full target coefficient is $(G+A_0)^{-1}X^\top y=(13/80)X^\top y$.
To verify calibration, at size two the count patterns $(2,0),(0,2),(1,1)$
have weights $r(r+2),r(r+2),4(r+1)^2$. Symmetry within each group gives
the all-response mean coefficient
\[
 \frac{3r+2}{6r^2+12r+4}.
\]
Equating it to $13/80$ gives $39r^2-42r-54=0$, with positive root $r_2$.
At size three, each omission has equal probability, and the mean
coefficient is $[1/(r+1)+2/(r+2)]/4$, which equals $13/80$ at $r=3$.
For size three, $u^\top w_S=1/4+2/5=13/20$ deterministically.
At size two it takes the distinct values $2/(2+r_2)$ and $2/(1+r_2)$,
each with positive probability. Thus
\[
 u^\top\Sigma_3(y)u=0,\qquad u^\top\Sigma_2(y)u>0.
\]
No positive lower comparison from size two to size three can hold for
the singular query $W=uu^\top$. Calibration alone does not provide
response-uniform two-sided Loewner comparisons.

\subsection{General-design leave-one-out geometry}\label{app:loo-proof}
Let $m>d\ge1$ and $s=m-1$, with arbitrary $A_0\succ0$ and its calibrated
$R\succ0$. This budget is feasible because
$d_{\mathrm{eff}}(A_0)<d\le m-1$.
Write $B=(G+R)^{-1}$, $L=I-XBX^\top$, and abbreviate $K_X$ by $K$.
For a fixed nonzero $W\succeq0$, define
\begin{equation}\label{eq:loo-scores}
 \begin{gathered}
 \ell_i=x_i^\top Bx_i,\qquad v_i=x_i^\top BWBx_i,\qquad
 D=\sum_i(1-\ell_i),\\
 \omega_i=\frac{v_i}{D(1-\ell_i)},\qquad
 \rho=\max_i\omega_i,\qquad M=\{i:\omega_i=\rho\}.
 \end{gathered}
\end{equation}
These scores combine query energy with the actual omission probabilities.
A \emph{residual response} satisfies $X^\top y=0$; for such a response,
$w_0=0$ and $J_0(y)=\lVert y\rVert^2$.

\begin{theorem}[LOO envelope and attainment]\label{thm:loo-envelope}
Under the preceding assumptions, $\rho>0$. The risk satisfies
$h_{m-1}(X,W)\le\rho$.
The equality kernel at this envelope is
\begin{equation}\label{eq:loo-kernel}
 \ker(\rho K-C_{m-1})
 =\{y:X^\top y=0,\ y_i=0\text{ for }i\notin M\}.
\end{equation}
Thus $h_{m-1}(X,W)=\rho$ if and only if the rows $X_M$ are linearly
dependent. When this holds, \eqref{eq:loo-kernel} is the complete
maximizing space, of dimension $|M|-\operatorname{rank}(X_M)$.
If those rows are independent, $h_{m-1}(X,W)<\rho$.
\end{theorem}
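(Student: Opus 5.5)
The plan is to make everything explicit at $s=m-1$, where the only random object is the deleted index $i$. The first tool is the matrix determinant lemma, which turns the law into explicit omission probabilities. The second is the Sherman--Morrison formula, which turns each selected fit into a rank-one correction of one fixed ridge fit. Calibration then identifies $K$ in terms of $L$, and the bound reduces to a chain of three elementary inequalities whose equality cases give \eqref{eq:loo-kernel}.

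\emph{Law and fits.} For $S=[m]\setminus\{i\}$ we have $\det(G-x_ix_i^\top+R)=\det(G+R)(1-\ell_i)$. Hence the omission probability is $p_i=(1-\ell_i)/D$, and $0\le\ell_i<1$ because $R\succ0$. Put $w_R=BX^\top y$ and $r=Ly=y-Xw_R$. Sherman--Morrison gives
\[
 w_{-i}=w_R-\frac{Bx_ir_i}{1-\ell_i}.
\]
Averaging over $p_i$ yields $\mathbb E w_S=w_R-BX^\top Ly/D$.

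\emph{Calibration identity.} Calibration says this mean equals $(G+A_0)^{-1}X^\top y$ for every $y$, so
\[
 (G+A_0)^{-1}X^\top=BX^\top-BX^\top L/D.
\]
Multiplying on the left by $X$ and subtracting from $I$ gives
\[
 K=L+\frac{(I-L)L}{D},\qquad K-L^2=\Bigl(1+\frac1D\Bigr)(L-L^2).
\]
Since $XBX^\top$ is PSD with eigenvalues in $[0,1)$, $L$ is symmetric with spectrum in $(0,1]$. Therefore $K\succeq L^2$, and $\ker(K-L^2)=\ker(XBX^\top)=\ker X^\top$. This step uses calibration essentially.

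\emph{The chain.} The centered fit is $w_{-i}-\mathbb Ew_S=-Bx_ir_i/(1-\ell_i)+(\text{a term independent of }i)$. Writing the covariance trace as a second moment minus the squared mean gives
\[
 y^\top C_{m-1}y=\sum_i p_i\frac{v_ir_i^2}{(1-\ell_i)^2}-\Bigl\lVert W^{1/2}\textstyle\sum_ip_i\frac{Bx_ir_i}{1-\ell_i}\Bigr\rVert^2
 \le\sum_i\omega_ir_i^2\le\rho\lVert Ly\rVert^2\le\rho\,y^\top Ky.
\]
This proves $h_{m-1}\le\rho$.

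\emph{Positivity of $\rho$.} The rows of $X$ span $\mathbb R^d$, $B$ is invertible and $W\ne0$, so $W^{1/2}Bx_i\ne0$ for some $i$. Hence $\rho>0$.

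\emph{Equality kernel.} Since $\rho K-C_{m-1}\succeq0$, a vector lies in its kernel exactly when $y^\top C_{m-1}y=\rho\,y^\top Ky$, which forces equality in all three steps. The last step, together with $\ker(K-L^2)=\ker X^\top$, forces $X^\top y=0$. Then $Ly=y$, so $r=y$, and the mean term is $BX^\top y/D=0$, giving equality in the first step. The middle step forces $y_i=0$ whenever $\omega_i<\rho$. Conversely, any $y$ with $X^\top y=0$ and support in $M$ gives equality throughout. This proves \eqref{eq:loo-kernel}.

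\emph{Attainment and dimension.} That space is $\{y:\operatorname{supp}y\subseteq M,\ X_M^\top y_M=0\}$, of dimension $|M|-\operatorname{rank}X_M$. It is nonzero iff the rows $X_M$ are dependent. If the kernel is zero, $\rho K-C_{m-1}\succ0$, and $\lambda_{\max}(M_{m-1})<\rho$ follows by compactness of the unit sphere, equivalently from \eqref{eq:normalized-operator}.

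The main obstacle is the calibration identity relating $K$ and $L$. Without it, the residual $r=Ly$ under the sampled penalty $R$ cannot be compared with the Full loss under $A_0$, because $R$ and $A_0$ need not commute with $G$. I would derive the identity directly from the mean formula as above, rather than through spectral bounds such as \eqref{eq:penalty-upper}.
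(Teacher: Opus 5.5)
Your proof is correct and follows essentially the same route as the paper: the determinant lemma and Sherman--Morrison give the LOO law and fits, calibration relates $K$ to $L$, and a three-term nonnegative deficit (the dropped squared mean, the $\operatorname{diag}(\rho-\omega_i)$ slack, and $\rho(K-L^2)$) has equality cases that give \eqref{eq:loo-kernel}. Your identity $K-L^2=(1+1/D)(L-L^2)$ is a slightly tidier form of the paper's split $(K-L)+(L-L^2)$ with $K-L=D^{-1}XBRBX^\top$, because $XBRBX^\top=(I-L)L$.
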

\begin{proof}
Every deleted normal matrix $G+R-x_ix_i^\top$ is positive definite.
The determinant lemma and the rank-one inverse identity give
\begin{equation}\label{eq:loo-deletion}
 0\le\ell_i<1,\qquad p_i=\frac{1-\ell_i}{D},\qquad
 w_{-i}=BX^\top y-\frac{Bx_i(Ly)_i}{1-\ell_i}.
\end{equation}
Also $D=m-\operatorname{tr}(GB)>m-d\ge1$.
The mean of the deleted correction is $D^{-1}BX^\top Ly$.
Since $X^\top L=RBX^\top$, all-response calibration and full row rank
of $X^\top$ give
\begin{equation}\label{eq:loo-calibration-identity}
 (G+A_0)^{-1}=B-D^{-1}BRB,\qquad
 K-L=D^{-1}XBRBX^\top.
\end{equation}
The second matrix is positive semidefinite with kernel $\ker(X^\top)$.
No commutation is used here.

Taking the correction's second moment in \eqref{eq:loo-deletion} and
subtracting its mean square gives the centered response operator
\begin{equation}\label{eq:loo-centered-operator}
 C_{m-1}=L\operatorname{diag}(\omega_i)L
       -D^{-2}LXBWBX^\top L.
\end{equation}
Woodbury gives $L=(I+XR^{-1}X^\top)^{-1}$, so $0\prec L\preceq I$.
Spectral calculus for this matrix gives $L-L^2\succeq0$. Therefore
\begin{equation}\label{eq:loo-full-gap}
 K-L^2=(K-L)+(L-L^2)\succeq0,\qquad
 \ker(K-L^2)=\ker(X^\top).
\end{equation}
Since the vectors $Bx_i$ span coefficient space and $W\ne0$, at least
one $v_i=\lVert W^{1/2}Bx_i\rVert^2$ is positive. Hence $\rho>0$.
Combining the identities gives the exact deficit
\begin{equation}\label{eq:loo-deficit-main}
 \rho K-C_{m-1}=\rho(K-L^2)
   +L\operatorname{diag}(\rho-\omega_i)L
   +D^{-2}LXBWBX^\top L.
\end{equation}
All three terms are positive semidefinite. The first forces
$X^\top y=0$ at equality. Then $Ly=y$, the last term vanishes, and the
middle term vanishes exactly when $y$ is supported on $M$.
This proves \eqref{eq:loo-kernel}. Such responses identify with
$\ker(X_M^\top)$, giving the dimension and attainment criterion.
If the dimension is zero, the deficit is positive definite. Compactness
of $\{y:y^\top Ky=1\}$ then gives $h_{m-1}(X,W)<\rho$.
\end{proof}

The theorem includes tied scores, singular queries, and zero rows.
A zero row has score zero and cannot belong to $M$.
When the envelope is unattained, the theorem does not compute the
smaller sharp value or classify its maximizers.

\begin{corollary}[Combined balance and the coloop qualification]\label{cor:loo-balance}
If $v_i/(1-\ell_i)=a$ for every row, then $a>0$,
$h_{m-1}(X,W)=a/D$, and the complete maximizing space is $\ker(X^\top)$.
Conversely, if every nonzero residual response globally maximizes risk,
then $\omega_i=h_{m-1}(X,W)$ on each non-coloop row.
Here a \emph{coloop} is a row whose deletion lowers column rank.
If there are no coloops, the following conditions are equivalent:
constant scores; every nonzero residual maximizes; and the complete
maximizing space is $\ker(X^\top)$.
\end{corollary}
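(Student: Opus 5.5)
The plan is to read everything off Theorem~\ref{thm:loo-envelope} and the exact deficit \eqref{eq:loo-deficit-main}. For the first claim, assume $v_i/(1-\ell_i)=a$ for all $i$. Then $\omega_i=a/D$ for every row, so $\rho=a/D$ and $M=[m]$. Since $\rho>0$ by the theorem, we get $a>0$. The kernel formula \eqref{eq:loo-kernel} with $M=[m]$ reduces to $\ker(X^\top)$. This space is nonzero because $m>d$ and $X$ has full column rank. Equivalently, the rows $X_M=X$ are $m>d$ vectors in $\mathbb R^d$ and hence dependent. The theorem then gives $h_{m-1}=\rho=a/D$, with complete maximizing space $\ker(X^\top)$.

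For the converse, suppose every nonzero residual response is a global maximizer with value $h:=h_{m-1}(X,W)$. Then $\ker(X^\top)\subseteq\ker(hK-C_{m-1})$, since a maximizer of the Rayleigh quotient of $C_{m-1}$ against $K$ lies in the kernel of $hK-C_{m-1}$, which is positive semidefinite. For residual $y$ we have $Ly=y$ (because $X^\top y=0$), $Ky=y$, and $X^\top Ly=X^\top y=0$. By \eqref{eq:loo-centered-operator}, $C_{m-1}y=L\operatorname{diag}(\omega_i)y$. The kernel condition therefore reads $h\,y=L\operatorname{diag}(\omega)y$ for all $y\in\ker X^\top$. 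I would then use $L=I-XBX^\top$ and apply $X^\top$ to split this into two parts. First, $P\operatorname{diag}(h-\omega)y=0$ with $P$ the orthogonal projector onto $\ker X^\top$. Second, a component in the column space of $X$ must vanish after multiplication by $L$; since $L$ is invertible, $\operatorname{diag}(h-\omega)y$ lies in $\operatorname{col}(X)$ and $L$ acts on it as $I-XBX^\top$. The cleaner route is to take the inner product with $y$: $y^\top\operatorname{diag}(h-\omega)y=0$ for every residual $y$.

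I also need $h\ge\omega_i$ on non-coloop rows, and this is the delicate point. A direct approach is cleanest. The operator $N=\operatorname{diag}(h-\omega)$ satisfies $y^\top N y=0$ for all $y\in\ker X^\top$. In addition, $N y\in\operatorname{col}(X)$ for such $y$, which follows from $h y=L\operatorname{diag}(\omega)y$ after rearranging with $Ly=y$. So $\ker X^\top$ is $N$-invariant up to $\operatorname{col}(X)$, meaning $PNP=0$ on $\ker X^\top$. A row $i$ is a non-coloop exactly when $e_i\notin\operatorname{col}(X)$, i.e.\ $Pe_i\ne0$. The tool for extracting coordinates is the support structure of $\ker X^\top$. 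This space is the cycle space of the row matroid, and non-coloop coordinates are exactly those covered by circuit vectors. The obstacle is getting $N_{ii}=0$ from $PNP=0$ with diagonal $N$, because a bilinear vanishing alone does not force individual diagonal entries to zero. I would first try the stronger relation $Ny\in\operatorname{col}(X)$ for every residual $y$, combined with $y$ ranging over circuit vectors. For a circuit $c$, $Nc$ is supported on the circuit and lies in $\operatorname{col}(X)$. A vector supported on a circuit that lies in $\operatorname{col}(X)=(\ker X^\top)^\perp$ must be orthogonal to $c$. Ideally circuit minimality would also force it to vanish, but that needs care. If this fails I would argue by perturbation. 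Since $hK-C_{m-1}\succeq0$ and its kernel contains $\ker X^\top$, one can test it against $y+\varepsilon z$ to get the first-order kernel identity. Choosing $z=e_i$ then gives $(h-\omega_i)(Pe_i)_i$-type terms, and $(Pe_i)_i=\lVert Pe_i\rVert^2>0$ for non-coloops.

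For the final equivalence, with no coloops: constant scores imply that every nonzero residual maximizes and that the space is exactly $\ker X^\top$, by the first part. Complete space equal to $\ker X^\top$ trivially implies that every residual maximizes. Every residual maximizing implies $\omega_i=h$ on all rows by the converse, since all rows are non-coloops, and hence $v_i/(1-\ell_i)=hD$ is constant.
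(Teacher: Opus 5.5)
Your forward direction and the final cycle of equivalences are fine. The converse, however, has a genuine gap: you never actually derive $\omega_i=h$.

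You correctly reach $hy=L\operatorname{diag}(\omega)y$ for every residual $y$. Your own rearrangement with $Ly=y$ then gives $LNy=0$ for $N=\operatorname{diag}(h-\omega)$. At that point you record only the weak consequence $Ny\in\operatorname{col}(X)$, even though you have noted that $L=(I+XR^{-1}X^\top)^{-1}\succ0$ is invertible. Invertibility gives $Ny=0$ outright, i.e.\ $(h-\omega_i)y_i=0$ for every $i$ and every residual $y$. This is exactly the paper's step of multiplying by $L^{-1}$ and using $L^{-1}y=y$. Now take $y=Pe_i$. Its $i$th entry is $\lVert Pe_i\rVert^2$, which is positive exactly when row $i$ is not a coloop, so $\omega_i=h$ on every non-coloop row. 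No inequality $h\ge\omega_i$ is needed; the conclusion is an equality obtained coordinatewise.

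The fallback routes you propose cannot replace this step. The relations $y^\top Ny=0$ and $Ny\in\operatorname{col}(X)$ for all residual $y$ are each equivalent to $PNP=0$. That condition does not determine the diagonal of a diagonal $N$. For example, take $X=(1,1)^\top$. The only circuit is $c=(1,-1)^\top$, and both rows are non-coloops. The matrix $N=\operatorname{diag}(1,-1)$ satisfies $c^\top Nc=0$ and $Nc=(1,1)^\top\in\operatorname{col}(X)$ with $Nc\ne0$. So circuit minimality does not force $Nc=0$ or $N_{ii}=0$.

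The perturbation idea also yields nothing new. Testing $hK-C_{m-1}\succeq0$ at $y+\varepsilon z$ gives, to first order, only $(hK-C_{m-1})y=0$ again. Its $e_i$ component, $hy_i=(L\operatorname{diag}(\omega)y)_i$, still contains the nondiagonal factor $L$. Removing that factor is precisely the invertibility step above.
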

\begin{proof}
Constant scores make $M$ contain every row, and $m>d$ guarantees a
nonzero residual. The theorem proves the forward statement.
For the converse write $h=h_{m-1}(X,W)$. At any global maximizer $z$,
Rayleigh stationarity gives $C_{m-1}z=hKz$. For residual $z$, $Lz=Kz=z$,
and the centered term in \eqref{eq:loo-centered-operator} vanishes.
Hence $L\operatorname{diag}(\omega_i)z=hz$.
Multiplying by $L^{-1}$ gives $\operatorname{diag}(\omega_i)z=hz$.
Row $i$ is not a coloop exactly when some residual has $z_i\ne0$:
a dependence with that nonzero coefficient expresses row $i$ using the
others, and conversely. Thus $\omega_i=h$ on every such row.
With no coloops this proves constancy and the stated equivalence.
\end{proof}

\paragraph{Attainment on a proper residual subspace.}
Take two copies of $e_1$, followed by two copies of $e_2$, and set
\[
 R=\operatorname{diag}(1,2),\qquad
 A_0=\operatorname{diag}(7/5,20/7),\qquad W=I.
\]
Substitution in \eqref{eq:loo-calibration-identity} verifies calibration.
Here $D=17/6$ and
$(\omega_i)=(1/17,1/17,1/34,1/34)$.
Thus $h_3=1/17$ and the complete maximizing space is
$\operatorname{span}\{(1,-1,0,0)^\top\}$.
The scores need not be constant for some residuals to maximize.

\paragraph{An unattained envelope.}
For $X=(1,2,3)^\top$, $R=W=1$, and $A_0=3/2$,
\eqref{eq:loo-calibration-identity} holds and the scores are
$(1/434,4/341,3/62)$. The unique maximal row is nonzero, hence
independent. Therefore $h_2<3/62$.

\paragraph{Why the coloop condition matters.}
Take
\[
 X=\begin{pmatrix}1&0\\1&0\\0&1\end{pmatrix},\qquad
 R=W=I,\qquad A_0=\operatorname{diag}(5/3,7/4).
\]
The scores are $(1/11,1/11,3/11)$, but
\eqref{eq:loo-centered-operator} gives
\[
 \frac1{11}K-C_2=
 \begin{pmatrix}7/363&7/363&0\\7/363&7/363&0\\0&0&1/121\end{pmatrix}.
\]
This matrix is positive semidefinite with kernel
$\ker(X^\top)=\operatorname{span}\{(1,-1,0)^\top\}$.
Every residual therefore maximizes at $1/11<\rho$.
The third row is a coloop and escapes the converse's constraint.

\subsection{Coefficient-matrix contact is a different equality question}
The preceding theorem fixes $W$ and varies the response.
A coefficient-matrix ceiling instead gives a simultaneous bound for all
coefficient query directions at each response.

\begin{proposition}[Calibrated LOO matrix contact]\label{prop:loo-matrix-contact}
Under the LOO assumptions above, put $H=G+R$ and
$t_\ell=\max_i\ell_i/(1-\ell_i)$. Then
\begin{equation}\label{eq:loo-matrix-ceiling}
 \Sigma_{m-1}(y)\preceq J_0(y)\frac{t_\ell}{D}H^{-1}.
\end{equation}
For $y\ne0$, the kernel of the difference in \eqref{eq:loo-matrix-ceiling}
is nonzero exactly when $X^\top y=0$ and all rows in
$\operatorname{supp}(y)$ are signed copies of one nonzero vector $v$
with globally maximal calibrated leverage $v^\top H^{-1}v$.
In that case the complete kernel in the original coefficient query
coordinates is $\operatorname{span}(v)$; otherwise it is zero.
\end{proposition}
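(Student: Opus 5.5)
The plan is to start from the explicit leave-one-out representation \eqref{eq:loo-deletion}, bound the resulting second-moment matrix by two Cauchy--Schwarz steps, and then read off the kernel by tracking when each slack vanishes. Write $B=H^{-1}$ and $u_i=(Ly)_i/(1-\ell_i)$. By \eqref{eq:loo-deletion}, $w_{-i}-BX^\top y=-Bx_iu_i$ with probability $p_i=(1-\ell_i)/D$, and the mean of this correction is $D^{-1}BX^\top Ly$. Hence
\[
 \Sigma_{m-1}(y)=D^{-1}B\Big(\sum_i\frac{(Ly)_i^2}{1-\ell_i}x_ix_i^\top\Big)B-D^{-2}BX^\top Lyy^\top LXB .
\]
The goal is to compare this with $J_0(y)\,t_\ell D^{-1}B$, which equals $D^{-1}B\,(t_\ell\,y^\top Ky\,H)\,B$.

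The ceiling will follow from a telescoped sum of positive semidefinite slacks. First, Cauchy--Schwarz in the $H$-inner product gives $x_ix_i^\top\preceq(x_i^\top H^{-1}x_i)H=\ell_iH$. Second, $\ell_i/(1-\ell_i)\le t_\ell$. Third, $\lVert Ly\rVert^2\le y^\top Ky$, which is \eqref{eq:loo-full-gap}. Together these write the difference as $D^{-1}B\,\mathcal S\,B$ with
\[
 \mathcal S=t_\ell\,y^\top(K-L^2)y\,H
 +\sum_i(Ly)_i^2\Big(t_\ell-\frac{\ell_i}{1-\ell_i}\Big)H
 +\sum_i\frac{(Ly)_i^2}{1-\ell_i}\big(\ell_iH-x_ix_i^\top\big)
 +D^{-1}X^\top Lyy^\top LX .
\]
Every term of $\mathcal S$ is positive semidefinite, which proves \eqref{eq:loo-matrix-ceiling}. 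Since $B$ is invertible, the kernel in query coordinates $a$ corresponds to $b=Ba$ with $b^\top\mathcal Sb=0$.

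For the equality analysis I would split on whether $y$ is residual. If $X^\top y\ne0$, then by \eqref{eq:loo-full-gap} the first scalar coefficient is positive, so the first term is a positive multiple of $H\succ0$ and the kernel is zero. If $X^\top y=0$, then $Ly=y$ and the last term vanishes. The kernel condition then says that for every $i\in\operatorname{supp}(y)$ two things hold: $\ell_i/(1-\ell_i)=t_\ell$, so row $i$ has globally maximal leverage, and the Cauchy--Schwarz equality $(x_i^\top b)^2=\ell_i\,b^\top Hb$ holds.

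The main obstacle is the last step: showing these conditions force a one-dimensional kernel and exactly the described row structure. First I need $t_\ell>0$. This holds because the rows span coefficient space, so some $\ell_i>0$; hence every support row is nonzero. Next, the Cauchy--Schwarz equality with $b\ne0$ forces $x_i\parallel Hb$. All support rows therefore lie on one line, and since they share the same leverage they have equal $H^{-1}$-norm, so they are $\pm v$ for a single $v$. This forces $b\in\operatorname{span}(H^{-1}v)$, so $a=Hb\in\operatorname{span}(v)$. Conversely, if the support rows are $\pm v$ with maximal leverage and $y$ is residual, then $b=H^{-1}v$ kills every slack, so the kernel is exactly $\operatorname{span}(v)$. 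I also need to check the degenerate cases: $y\ne0$ residual has nonempty support, and the kernel argument must not require $b^\top Hb>0$ separately beyond $b\ne0$.
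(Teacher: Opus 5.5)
Your proposal is correct and follows essentially the paper's proof: the paper writes the same three-term positive semidefinite deficit after whitening by $H^{1/2}$, where its scalar coefficient $J_0-J_R+w^\top Rw$ equals your $y^\top(K-L^2)y$, and its middle term $\sum_i e_i^2\bigl(t_\ell I-a_ia_i^\top/(1-\ell_i)\bigr)$ with $a_i=H^{-1/2}x_i$ is your two middle terms combined. It then reads off the kernel by the same split on whether $X^\top y=0$, followed by the same collinearity and equal-leverage argument for the active rows.
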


Appendix~\ref{app:calibration-support} gives the full positive semidefinite
decomposition and the kernel argument. The leverage is evaluated at the
calibrated penalty $R$, not at the prescribed target $A_0$.
For any $W\succeq0$,
$v_i\le\ell_i\operatorname{tr}(WB)$, so
\[
 \rho\le\frac{t_\ell}{D}\operatorname{tr}(WB).
\]
Thus tracing the matrix ceiling can lose the sharp fixed-query value.
Its contact kernel is also separate from the zero-total-variance space
of Lemma~\ref{lem:zero-variance}.

\paragraph{A noncommuting example with unequal omission probabilities.}
Consider
\begin{equation}\label{eq:loo-witness}
 \begin{gathered}
 X=\begin{pmatrix}1&1\\0&1\\3/5&7/5\\-3/5&-7/5\end{pmatrix},\qquad
 R=\begin{pmatrix}1&1\\1&2\end{pmatrix},\\
 A_0=\frac1{1125}\begin{pmatrix}1607&1583\\1583&3152\end{pmatrix},\qquad
 W=\frac15\begin{pmatrix}28&52\\52&118\end{pmatrix}.
 \end{gathered}
\end{equation}
The matrices requiring positive definiteness have positive diagonal
entries and determinants. They satisfy \eqref{eq:loo-calibration-identity}.
Direct substitution gives
\[
 \begin{gathered}
 (\ell_i)=(41/100,17/50,1/4,1/4),\qquad
 (v_i)=(59/100,33/50,3/4,3/4),\\
 D=11/4,\qquad (p_i)=(59,66,75,75)/275.
 \end{gathered}
\]
Every score is $4/11$, although neither leverage nor query energy is
constant. Hence the sharp value is $4/11$ and the complete maximizing
space is spanned by
$(-3/5,-4/5,1,0)^\top$ and $(3/5,4/5,0,1)^\top$.
There are three distinct row lines in dimension two, so no invertible
feature change makes this a coordinate-replica design.
The upper-right entries of $GR-RG$, $GA_0-A_0G$, and $RA_0-A_0R$ are,
respectively, $-38/25$, $-836/375$, and $-38/1125$.
Finally, $t_\ell=41/59$ and $\operatorname{tr}(WB)=4$.
The traced matrix ceiling is therefore $656/649$, strictly larger than
the sharp value $4/11$ under the same law, target, and query.

\section{Balanced geometry across budgets}\label{sec:geometry}
\subsection{Balanced replicas across budgets}
Repeated-coordinate designs and countwise ridge calculations have a
precedent in \citet[proof of Theorem~18]{derezinski2018}.
Here we use them to study centered covariance over the calibrated subset draw.
The key step is the strict sector ordering; the equality space follows
from the resulting positive-semidefinite deficit.
Suppose now that $m=nd$, with $n,d\ge2$, and group $j$ contains $n$
rows $x_i=\varepsilon_i e_j$, where $\varepsilon_i\in\{-1,1\}$.
Fix $A_0=\lambda_0 I$, $\lambda_0>0$. Symmetry and
Theorem~\ref{thm:calibration} give $R_s=r_s I$ for every $d\le s<m$.
Here $q_j$ denotes a signed group mean. For $i$ in group $j$, write
\begin{equation}\label{eq:contrasts}
\begin{gathered}
 \varepsilon_i y_i=q_j+z_i,\qquad \sum_{i\in j}z_i=0,\\
 E_j=\sum_{i\in j}z_i^2,\qquad b=\frac{n\lambda_0}{n+\lambda_0}.
\end{gathered}
\end{equation}
Then $w_0=nq/(n+\lambda_0)$ and $J_0=\sum_jE_j+b\lVert q\rVert^2$.

The group counts $K_j=|S\cap j|$ have the global law
\begin{equation}\label{eq:count-law}
 \mathbb P(K=k)\ \propto\ \prod_{j=1}^d\binom n{k_j}(r_s+k_j),
 \qquad \sum_jk_j=s.
\end{equation}
There are no fixed group quotas. Set $f(k)=k/(r_s+k)$ and define
\begin{align}\label{eq:geometry-coefficients}
 \alpha_s&=\mathbb E\frac{K_1(n-K_1)}{n(n-1)(r_s+K_1)^2},\\
 \beta_s&=\frac{\mathbb E(f(K_1)-f(K_2))^2}{2b},\\
 \tau_s&=-\frac{\operatorname{Cov}(f(K_1),f(K_2))}{b}.\label{eq:geometry-tau}
\end{align}
Conditional on $K$, the selected rows are independent uniform subsets
within their groups. If $T_j$ is the selected sum of signed contrasts, then
\[
 \mathbb E(T_j\mid K)=0,\qquad
 \operatorname{Var}(T_j\mid K)=\frac{K_j(n-K_j)}{n(n-1)}E_j,
 \qquad (w_S)_j=f(K_j)q_j+\frac{T_j}{r_s+K_j}.
\]
Total covariance and exchangeability give
\begin{equation}\label{eq:block-covariance}
 \Sigma_s(y)=\alpha_s\operatorname{diag}(E_j)
   +b\beta_s\operatorname{diag}(q_j^2)-b\tau_sqq^\top.
\end{equation}
The $\alpha_s$ term comes from sampling within groups. The $\beta_s$
and $\tau_s$ terms come from random group counts. The next theorem shows
that contrasts have a strictly larger risk coefficient than group means
at every stated budget.

\begin{theorem}[Sharp query risk and all maximizers]\label{thm:sharp-geometry}
For the balanced design above, every $d\le s<nd$ satisfies
$\alpha_s>\beta_s$ and $\tau_s>0$. For any fixed nonzero $W\succeq0$,
put $c=\max_jW_{jj}$ and $I_{\max}=\{j:W_{jj}=c\}$. Then
\begin{equation}\label{eq:sharp-balanced}
 h_s(X,W)=\alpha_s c.
\end{equation}
The nonzero maximizing responses are exactly the nonzero elements of
\begin{equation}\label{eq:old-space}
 \mathcal E=\{y:q=0,\ z_i=0\text{ for }i\text{ in groups outside }I_{\max}\}.
\end{equation}
This raw response space has dimension $(n-1)|I_{\max}|$ and is the same
for every stated budget at the fixed target and query.
\end{theorem}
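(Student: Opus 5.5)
Given the block covariance \eqref{eq:block-covariance} and the decomposition $J_0=\sum_jE_j+b\lVert q\rVert^2$, the plan splits into two parts. The first is a reduction: assuming the coefficient facts $\alpha_s>\beta_s$ and $\tau_s>0$, derive the sharp value and the maximizing space. The second is to prove those coefficient facts, which carry all the difficulty.

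\emph{Reduction.} Trace \eqref{eq:block-covariance} against $W$ to get
\[
 \operatorname{tr}(W\Sigma_s)=\alpha_s\sum_jW_{jj}E_j+b\beta_s\sum_jW_{jj}q_j^2-b\tau_s\,q^\top Wq .
\]
Since $W\succeq0$ and $\tau_s>0$, the last term is nonpositive. Bounding $W_{jj}\le c$ then gives
\[
 \operatorname{tr}(W\Sigma_s)\le \alpha_s c\sum_jE_j+b\beta_s c\lVert q\rVert^2\le \alpha_s c\,J_0 .
\]
Call the total gap the deficit. It is a sum of three nonnegative pieces:
\[
 \alpha_s\sum_j(c-W_{jj})E_j,\qquad b\Bigl(\alpha_s c-\beta_s\sum_jW_{jj}q_j^2/\lVert q\rVert^2\Bigr)\lVert q\rVert^2,\qquad b\tau_s\,q^\top Wq .
\]
If $c>0$, the middle piece is at least $b(\alpha_s-\beta_s)c\lVert q\rVert^2$. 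This vanishes only at $q=0$, which is where strictness $\alpha_s>\beta_s$ is used. We need $c>0$: this holds because a nonzero PSD $W$ has a positive diagonal entry.

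With $q=0$, the first piece vanishes exactly when $E_j=0$, i.e.\ $z=0$ in group $j$, for every $j\notin I_{\max}$. Conversely, such $y$ make every piece zero. So the maximizers are exactly $\mathcal E$.

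For the dimension count, the map $y\mapsto(\varepsilon_iy_i)$ is an isometric bijection. Contrasts within a group form a space of dimension $n-1$, giving $(n-1)|I_{\max}|$. The set $\mathcal E$ involves only the signs, $W$, and $I_{\max}$, so it is independent of $s$.

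Attainment needs $\mathcal E\ne0$, which holds since $n\ge2$. It also needs $\alpha_s>0$. This follows because $s<nd$ forces $\mathbb P(0<K_1<n)>0$: a configuration with some $0<k_j<n$ exists unless every $k_j\in\{0,n\}$. Even in that all-$\{0,n\}$ case, exchanging one row between groups gives a positive-probability configuration with partial counts.

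\emph{Coefficient inequalities.} I would treat $\tau_s>0$ first. This is negative covariance of $f(K_1),f(K_2)$ under the count law \eqref{eq:count-law} with fixed total $s$. The weights $\binom n{k}(r+k)$ are log-concave in $k$, since a product of log-concave sequences is log-concave. A fixed-sum product of independent log-concave laws is negatively associated (the Efron / Joag-Dev–Proschan result). Because $f$ is strictly increasing and $K_1$ is nondegenerate, $\operatorname{Cov}<0$ strictly.

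For $\alpha_s>\beta_s$, I would first use calibration to express $b$ through $r_s$. Calibration says $\mathbb E f(K_1)=n/(n+\lambda_0)=b/\lambda_0$, which ties $b$ to the law. Then
\[
 \beta_s=\frac{\operatorname{Var}f(K_1)-\operatorname{Cov}(f(K_1),f(K_2))}{b}.
\]
The goal is to compare this termwise with $\mathbb E[K(n-K)/(n(n-1)(r+K)^2)]$. I would try a coupling that moves one row between groups and bounds the variance of $f$ by its discrete derivative,
\[
 f(k+1)-f(k)=\frac{r}{(r+k)(r+k+1)},
\]
so that the comparison reduces to $r$ versus $b$, together with the bound $R_s\preceq(s/m)A_0$ from Theorem~\ref{thm:calibration}.

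\textbf{Main obstacle.} I expect the strict inequality $\alpha_s>\beta_s$ uniformly over all $d\le s<nd$, $n$, $d$, $\lambda_0$ to be the hard step, since $r_s$ is only implicitly defined. Near the corners — $s=d$ with small $\lambda_0$, and $s=nd-1$ — the margin may degenerate. I anticipate needing an analytic bound in the bulk together with an exact sign certificate, i.e.\ a polynomial positivity check in $r$ and the counts, at the extreme budgets. That is consistent with the paper's mention of computer-assisted proofs.
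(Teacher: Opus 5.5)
Your reduction is correct and matches the paper's deficit identity \eqref{eq:sharp-deficit}. With $b>0$, $c>0$, $\beta_s\ge0$ and $\tau_s>0$, the three nonnegative pieces force $q=0$ and contrasts supported on $I_{\max}$, and the dimension count and budget independence follow as you say. The genuine gap is that the theorem \emph{asserts} $\alpha_s>\beta_s$ at every $d\le s<nd$, and you do not prove it. Everything else rests on that sign.

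The route you sketch is unlikely to close, because the inequality is asymptotically tight.
\begin{itemize}
\item At $s=d$ with $r\downarrow0$, one has $1-\beta/\alpha\to n/[d(n-1)+1]$ (Proposition~\ref{prop:small-penalty-gap}). Meanwhile $b=nrQ/Z\to0$, so $\beta$ is a ratio of two vanishing quantities.
\item At $s=nd-1$, the exact formula $\beta/\alpha=dr(r+n-1)/\{(r+n)[d(r+n-1)+1]\}$ tends to one as $r\to\infty$.
\end{itemize}
So a coupling or discrete-derivative variance bound that loses any constant factor fails.

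In particular, the bound $R_s\preceq(s/m)A_0$ is too weak for your purpose. At $s=d$, $r\downarrow0$, it only gives $\lambda_0\ge nr$, hence $b\gtrsim nr$. In fact $\lambda_0/r\to d(n-1)+1$ and $b\sim[d(n-1)+1]r$. Since $\beta\propto1/b$, even with the numerator $\tfrac12\mathbb E(f(K_1)-f(K_2))^2$ computed exactly, your upper bound for $\beta$ tends to $(n-1)(d-1)/n^2$. That is not below $\alpha\to1/n$ once $(n-1)(d-1)\ge n$. You must use the exact identity $b=n(1-\mathbb Ef(K_1))$; the paper proves the sign for every $r>0$ with $\lambda_0$ induced by $r$.

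Your guess about where the difficulty lies is also off. The leave-one-out endpoint has a one-line closed form; the middle budgets are the hard part.

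What is missing is an exact algebraic handle on both coefficients. The paper gets it from $g(x)=(1+x)^{n-1}[r+(r+n)x]$.
\begin{itemize}
\item Deleting one or two determinant factors gives $\alpha=A/Z$ and $\beta=[nT-(n-1)A]/[(n+r)Q]$. The sign is then equivalent to $A\{(n+r)Q+(n-1)Z\}>nZT$.
\item An auxiliary product-weight experiment gives $A/T=\mathbb E[(r+V)/(r+1+U)]$.
\item For $n\ge3$, $d\ge4$, the sign then follows from two reciprocal lower bounds (the lattice bound and weighted Cauchy). A Newton-inequality variance bound places $\mathbb EH$ above the positive root of an explicit quadratic. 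Exact integer polynomial certificates on rational cells cover the root domain, and monotonicity certificates carry the sign from the root to the actual mean.
\item Separate arguments handle $n=2$, $d=2,3$, and $s=nd-1$.
\end{itemize}

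A smaller point concerns your proof of $\tau_s>0$. Negative association only gives $\operatorname{Cov}(f(K_1),f(K_2))\le0$. For strictness you need two further facts: the conditional law of $K_2$ given $K_1=k$ is stochastically decreasing, and it \emph{genuinely changes} with $k$ (its mean is $(s-k)/(d-1)$). Combine these with the independent-copy covariance identity; that is how the paper concludes.
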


\begin{proof}
The strict signs determine the sharp value and every maximizing response
through the exact deficit
\begin{align}\label{eq:sharp-deficit}
 \alpha_scJ_0-\operatorname{tr}(W\Sigma_s)
 ={}&\alpha_s\sum_j(c-W_{jj})E_j\nonumber\\
 &+b\sum_j(\alpha_sc-\beta_sW_{jj})q_j^2\nonumber\\
 &+b\tau_sq^\top Wq.
\end{align}
Every mean coefficient is strictly positive. Equality therefore forces
all group means to vanish and all contrasts to lie in $I_{\max}$.
Conversely those responses attain the bound. Off-diagonal entries of
$W$ affect individual risks but neither the sharp value nor this space.

The hard step is the strict sign $\alpha_s>\beta_s$, including middle
budgets. Its proof is computer-assisted and exact. Deletion identities
reduce it to a reciprocal moment. A variance bound places an auxiliary
mean above a quadratic root. Two reciprocal lower bounds cover different
parts of the root domain, and finite integer polynomial certificates
prove the needed signs and monotonicity on that entire domain.
Here are the precise reductions used by that sign proof. For
$g(x)=(1+x)^{n-1}[r+(r+n)x]$, put
\[
\begin{split}
 Z&=[x^s]g(x)^d,\quad
 Q=[x^s](1+x)^{d(n-1)+1}[r+(r+n)x]^{d-1},\\
 T&=[x^{s-1}](1+x)^{d(n-1)}[r+(r+n)x]^{d-2},\\
 A&=[x^{s-1}]g(x)^{d-1}
       \sum_{j=0}^{n-2}\binom{n-2}{j}\frac{x^j}{r+1+j}.
\end{split}
\]
Deletion of one or two determinant factors gives
$\alpha=A/Z$ and $\beta=[nT-(n-1)A]/[(n+r)Q]$.
Thus it suffices to prove $A\{(n+r)Q+(n-1)Z\}>nZT$.
For $n\ge3,d\ge4$, select $s-1$ items with product weights from
$d(n-1)$ weight-one items and $d-2$ items of weight $(r+n)/r$.
For disjoint ordinary blocks of sizes $n-2,n$, with counts $U,V$,
coefficient extraction gives $A/T=\mathbb E[(r+V)/(r+1+U)]$.
Two lower bounds are used: the pointwise integer bound
\[
 \frac1{r+1+U}\ge\frac{r+2-U}{(r+1)(r+2)}
\]
and weighted Cauchy. The variance of the number of unselected
exceptional items places its mean above the positive root of an explicit
quadratic. At that root, three rational cells cover the full domain.
On the two lower cells the integer bound has positive residual; on the
upper cell Cauchy does. Exact positive-coefficient expansions also prove
the monotonicity needed to pass from the root to the actual mean.
Appendix~\ref{app:geometry} gives every defining polynomial, the cell
maps and their inverses, both finite seams, and the minimum-budget face.
It also proves the separate $n=2$ and $d=2,3$ cases, and the direct LOO
endpoint. These cases cover exactly the theorem's domain. Newton's
coefficient inequality gives strict negative covariance of the increasing
count scores, hence $\tau>0$; its boundary-aware conditional argument is
included there. All large coefficient checks use exact integer arithmetic.
The preceding deficit then proves the theorem.
\end{proof}

For example, when $n=d=2$, $s=3$, and $\lambda_0=10/7$, calibration
gives $r_s=1$ and $\alpha_s=1/16$. For $W=I$, all responses
$(a,-a,b,-b)$ maximize the ratio in the positive-copy realization.
The sharp value is $1/16$, while the isotropic matrix ceiling
$\Sigma_s\preceq\alpha_sJ_0I$ gives the larger trace bound $1/8$.
At $s=nd$ every covariance is zero and every nonzero response maximizes.
The theorem excludes that endpoint and makes no assertion of monotone
risk across budgets or of optimality over other estimators.

\subsection{A quantitative margin at the minimum budget}
The sign above holds for every positive penalty. Near zero penalty at
$s=d$, its size also has a simple bound.
\begin{proposition}[Minimum-budget margin]\label{prop:small-penalty-gap}
In the balanced model with $n,d\ge2$ and $s=d$, if the calibrated raw
penalty satisfies $0<r_d\le1/(4d^2)$, then
\[
 \alpha_d-\beta_d\ge\frac1{4[d(n-1)+1]}.
\]
For fixed $n,d$, as the raw penalty $r\downarrow0$ and the induced target
also tends to zero,
\[
 \alpha\longrightarrow\frac1n,\qquad
 \beta\longrightarrow\frac{(n-1)(d-1)}{n[d(n-1)+1]},\qquad
 \frac{\lambda_0}{r}\longrightarrow d(n-1)+1.
\]
\end{proposition}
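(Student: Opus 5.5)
The plan is to expand the count law \eqref{eq:count-law} at $s=d$ in powers of the raw penalty $r=r_d$, since at $r=0$ it collapses onto a single count vector. The weight $\prod_j\binom n{k_j}(r+k_j)$ with $\sum_jk_j=d$ is a polynomial in $r$. Its $r^0$ part is nonzero only when every $k_j\ge1$, which forces $k=(1,\dots,1)$ with weight $n^d$. Its $r^1$ part comes from count vectors with exactly one zero, one $2$ and the rest $1$. There are $d(d-1)$ such vectors, each of weight $r\binom n2\cdot2\cdot n^{d-2}$. So, up to normalization, $\mathbb P(K=\mathbf 1)=1-r\,d(d-1)(n-1)/n+O(r^2)$ and $\mathbb P(K_1=0)=\mathbb P(K_1=2)=r(d-1)(n-1)/n+O(r^2)$. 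Everything else is $O(r^2)$.

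\textbf{Step 1: the target.} Calibration in the balanced model reads $n/(n+\lambda_0)=\mathbb E f(K_1)$, obtained by comparing $w_0=nq/(n+\lambda_0)$ with the mean of $(w_S)_j=f(K_j)q_j+T_j/(r+K_j)$. With $f(1)=1-r+O(r^2)$, $f(0)=0$ and $f(2)=1+O(r)$, this gives
\[
\mathbb E f(K_1)=1-r-\frac{r(d-1)(n-1)}{n}+O(r^2)=1-\frac{r\,[d(n-1)+1]}{n}+O(r^2).
\]
Hence $\lambda_0/r\to d(n-1)+1$, and also $b=n\lambda_0/(n+\lambda_0)=\lambda_0(1+O(r))$.

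\textbf{Step 2: the limits of $\alpha$ and $\beta$.} At $K_1=1$ the summand of $\alpha$ equals $1/\{n(1+r)^2\}$, so $\alpha\to1/n$. For $\beta$, the squared difference $(f(K_1)-f(K_2))^2$ vanishes on $K=\mathbf 1$. It is $1+O(r)$ when exactly one of $K_1,K_2$ is zero, and $O(r^2)$ otherwise. Therefore
\[
\mathbb E(f(K_1)-f(K_2))^2=\frac{2r(d-1)(n-1)}{n}+O(r^2).
\]
Dividing by $2b\sim2r[d(n-1)+1]$ gives the stated limit for $\beta$. The limiting gap is then
\[
\frac1n-\frac{(n-1)(d-1)}{n[d(n-1)+1]}=\frac{1}{d(n-1)+1}.
\]
The claimed bound is a quarter of this, so it suffices to make the errors explicit.

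\textbf{Step 3: quantitative bounds for $r\le1/(4d^2)$ (main obstacle).} This is where the real work lies: the $O(r^2)$ terms must be replaced by explicit inequalities that are uniform in $n$.

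For $\alpha$, I would use the lower bound $\alpha\ge\mathbb P(K=\mathbf 1)/\{n(1+r)^2\}$. To bound $\mathbb P(K\ne\mathbf 1)$, I would compare the total weight of count vectors with $z\ge1$ zeros to $n^d$. Such a vector carries a factor $r^z$ and a surplus $\prod_j\binom n{k_j}k_j$, and this surplus is bounded by $n^d(n-1)^z$ times combinatorial factors of size at most $d^{2z}$. The geometric series in $4d^2r\le1$ should give $\mathbb P(K\ne\mathbf 1)\le C\,r d^2$ for an absolute constant $C$.

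For $\beta$, I would bound $(f(K_1)-f(K_2))^2\le1$ on the event that some $K_j=0$. On the complementary event, where all counts are at least $1$ but some exceed $1$, I would use $|f(k)-f(l)|\le r$ for $k,l\ge1$. To lower bound $b$, I would not expand $\lambda_0$; instead I would invert Step 1 exactly, $\lambda_0/n=1/\mathbb E f(K_1)-1\ge1-\mathbb E f(K_1)$, and bound $1-\mathbb E f(K_1)$ below by the exact contributions $\mathbb P(K=\mathbf 1)\,r/(1+r)+\mathbb P(K_1=0)$.

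Both the numerator and the denominator of $\beta$ are then governed by the same one-zero weight, so their ratio can be controlled tightly. The remaining task is to check that the accumulated relative errors stay below the factor $3/4$ of slack once $r\le1/(4d^2)$. I expect this bookkeeping, keeping every constant independent of $n$, to be the delicate part. I would first try writing all probabilities as ratios to the single weight $n^d$, so that $n$-dependence cancels through $(n-1)/n\le1$.
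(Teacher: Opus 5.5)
Your Steps~1--2 are correct. At $s=d$ only $K=\mathbf 1$ survives at order $r^0$, and the $d(d-1)$ vectors with one zero and one two enter at relative order $r(n-1)/n$. This gives $\lambda_0/r\to M+1$, $\alpha\to1/n$ and $\beta\to(n-1)(d-1)/[n(M+1)]$, with $M=d(n-1)$. That is a valid alternative to the paper, which reads the limits off $Z=n^d$, $A=n^{d-1}$, $Q=n^{d-1}(M+1)$, $T=n^{d-2}M$ at $r=0$, together with $b=nrQ/Z$.

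The gap is Step~3. The explicit bound on $0<r\le1/(4d^2)$ is the substance of the proposition, and the error control you sketch cannot deliver it. The target margin $\frac1{4(M+1)}$ is at most a fraction $\frac1{2d}$ of $\alpha\approx\frac1n$, so $\alpha$ and $\beta$ must each be controlled to relative accuracy $O(1/d)$. On the window, however, the one-zero vectors have total weight $d(d-1)\,r(n-1)(r+2)/[2n(r+1)^2]$ relative to $K=\mathbf 1$. This approaches $\frac14$ at $r=1/(4d^2)$ for large $n,d$, so $\mathbb P(K=\mathbf 1)$ can be as small as about $4/5$. Your bound $\alpha\ge\mathbb P(K=\mathbf 1)/\{n(1+r)^2\}$ then discards an amount of order $d^2r/n$, which is a constant multiple of $1/n$ near the top of the window. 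The entire gap is only $\frac1{M+1}\le\frac2{dn}$. Since $\beta=\frac1n(1-O(1/d))$ there, for large $d$ your lower bound for $\alpha$ falls below $\beta$ itself, whatever upper bound for $\beta$ you use. The same defect affects any error term of size $Crd^2$ relative to $\alpha$ with an absolute constant $C$, including your proposed $\mathbb P(K\ne\mathbf 1)\le Crd^2$.

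A direct route would need three repairs. First, keep the $K_1=1$ mass from configurations whose zero and two lie outside group~1, so that the loss is governed by $\mathbb P(K_1\ne1)\approx2(d-1)r(n-1)/n=O(1/d)$. Second, restrict the event in $\beta$ to $\{K_1=0\}\cup\{K_2=0\}$: taken over all groups it has probability of order $d^2r$, while $b$ is only of order $dr$. Third, as you suggest, write the numerator of $\beta$ and $b=n(1-\mathbb Ef(K_1))$ (an exact identity) over one normalization.

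Even with these repairs, the first-order term in $\alpha=\frac1n-\frac rn\bigl[2+(d-1)\frac{3n-2}{2n}\bigr]+O(r^2)$ consumes up to roughly $3/8$ of the gap at $r=1/(4d^2)$. The two-zero configurations also change $\mathbb P(K_1=0)$ by a relative amount of order $d^2r$, which matters at this accuracy. Every constant must therefore be tracked, uniformly in $n$, to within about a factor of two, and this is exactly what is left undone.

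The paper avoids configuration tails altogether. Its deletion identities, the auxiliary experiment, and the integer bound $1/(r+1+U)\ge(r+2-U)/[(r+1)(r+2)]$ reduce $\alpha-\beta$ to a ratio with numerator $P(h)=P(0)+p_1h+p_2h^2$. Here $h$ is the mean unselected exceptional count, $p_1,p_2>0$, and $P(0)\ge dn^4(M-1)[2-d^2(3r+4r^2+r^3)]$; the $3d^2r$ term is the same first-order loss. The drift identity then gives $h\le rd^2/2$, hence $Q_n/Q_0\le9/8$ and $Z_n/Z_0<3/2$. The final constant $131072/499851$ clears $1/4$ by only about five percent.
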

\begin{proof}
Put $M=d(n-1)$ and use the deletion quantities $A,Q,T,Z$ from the
preceding proof at $s=d$. In the auxiliary experiment, the number $H$
of unselected exceptional items has mass proportional to
\[
 \binom{d-2}{j}\binom M{j+1}\left(\frac r{r+n}\right)^j,
 \qquad 0\le j\le d-2.
\]
The ordinary selected count is $1+H$. Write $h=\mathbb EH$, and let
$U,V$ count disjoint ordinary blocks of sizes $n-2,n$. Define
\begin{align*}
 Q_n&=rnd(nd-1)+nd(M+1)+n^2dh,\\
 Z_n&=rQ_n+nd[r(nd-1)+n(1+h)],\\
 H_n&=(n+r)Q_n+(n-1)Z_n,\\
 B&=Mr+n(1+h),\qquad C=Mr(d-1)-(1+h)r(d-1).
\end{align*}
Adding the deleted factors and taking ordinary factorial moments gives
\[
 \frac QT=\frac{Q_n}{dM},\quad \frac ZT=\frac{Z_n}{dM},\quad
 \mathbb E(r+V)=\frac BM,\quad
 \mathbb E[(r+V)U]=\frac{(n-2)C}{M(M-1)}.
\]
These are also the specialization $u=1$ of
\eqref{eq:actual-mean-identities}; they remain valid when $n=2$ or $d=2$.
The integer reciprocal bound gives
\[
 \frac AT\ge
 \frac{(r+2)(M-1)B-(n-2)C}{M(M-1)(r+1)(r+2)}.
\]
Consequently, if
\[
 P(h)=[(r+2)(M-1)B-(n-2)C]H_n
       -n(r+1)(r+2)M(M-1)Z_n,
\]
then
\begin{equation}\label{eq:margin-reduction}
 \alpha-\beta\ge
 \frac{dP(h)}{(M-1)(r+1)(r+2)(n+r)Q_nZ_n}.
\end{equation}
The short expansions in Appendix~\ref{app:margin-algebra} show that
$P(h)=P(0)+p_1h+p_2h^2$, with $p_1,p_2>0$, and
\[
 P(0)\ge dn^4(M-1)[2-d^2(3r+4r^2+r^3)].
\]
On the stated window the subtracted expression is at most
$3/4+1/16+1/1024<1$. Hence $P(h)\ge dn^4(M-1)$.

The adjacent mass ratio for $H$ gives the drift identity
\[
 r(d-2)(M-1)=r(nd-2)h+nh+n\mathbb EH^2.
\]
Since $H^2\ge H$, we obtain
$0\le h\le r(d-2)(M-1)/(2n)\le rd^2/2\le1/8$.
For $Q_0=nd(M+1)$ and $Z_0=n^2d$, it follows that
\begin{align*}
 Q_n/Q_0
 &=1+\frac{r(nd-1)+nh}{M+1}
 \le1+2r+r(d-2)/2\le1+rd\le9/8,\\
 Z_n/Z_0
 &=1+h+\frac{r(nd-1)}n+
          \frac{r(M+1)}n\frac{Q_n}{Q_0}
 \le1+\frac18+\frac{17}8rd<\frac32,\\
 \frac{(r+1)(r+2)(n+r)}{2n}
 &\le\frac{17}{16}\left(\frac{33}{32}\right)^2
 =\frac{18513}{16384}.
\end{align*}
Here $(nd-1)/(M+1)\le2$, $(M+1)/n\le d$, $rd\le1/8$,
and $r\le1/16$ were used. Substitution into
\eqref{eq:margin-reduction} yields
\[
 \alpha-\beta\ge
 \frac1{M+1}\frac{1/2}{(18513/16384)(9/8)(3/2)}
 =\frac{131072}{499851(M+1)}>\frac1{4(M+1)}.
\]
Finally, the coefficient definitions at $r=0$ give
$Z=n^d$, $A=n^{d-1}$, $Q=n^{d-1}(M+1)$, and $T=n^{d-2}M$.
Their positive denominators give the asserted limits, including
$\alpha-\beta\to1/(M+1)$. Since $b=nrQ/Z$ and
$\lambda_0=nb/(n-b)$, also $\lambda_0/r\to M+1$.
\end{proof}
The variance normalization $b$ vanishes at this boundary. Setting all
count scores to one before normalizing would lose the nonzero limit of
$\beta$. The proposition gives a finite bound only when the calibrated
penalty meets its stated window.

\subsection{Why the budget restriction matters}
Calibration can exist below $d$. The strict sector order can then reverse.
Take two positive copies of each coordinate in dimension three, and
sample $s=2$. The count patterns are permutations of $(2,0,0)$ and
$(1,1,0)$. Their individual unnormalized masses are respectively
$r^2(r+2)$ and $4r(r+1)^2$. Thus, with $A_r=5r^2+10r+4$,
\[
 Z=3rA_r,\qquad
 \rho=\mathbb E f(K_1)=\frac{2(5r+4)}{3A_r},\qquad
 \lambda_0=\frac{2(1-\rho)}\rho
           =\frac{15r^2+20r+4}{5r+4}.
\]
For example, the numerator for $\mathbb Ef(K_1)$ is
$2r^2+8r(r+1)$: one pattern has $K_1=2$ and two have $K_1=1$.
The map $r\mapsto\lambda_0$ increases from $1$ to infinity, since its
derivative is $(75r^2+120r+60)/(5r+4)^2>0$. This agrees with the
feasibility condition $2>6/(2+\lambda_0)$.

The same six patterns give
\[
 \mathbb Ef(K_1)^2=\frac{4r^2/(r+2)+8r}{Z},\qquad
 \mathbb E[f(K_1)f(K_2)]=\frac{4r}{Z}.
\]
Only the two patterns with $K_1=1$ contribute to $\alpha$.
Using $b=2(1-\rho)$ in the coefficient definitions therefore gives
\begin{align*}
 \alpha&=\frac4{3A_r},\qquad
 \beta=\frac{4(r+1)}{(r+2)(15r^2+20r+4)},\\
 \tau&=\frac{4(5r^2+5r+2)}{3A_r(15r^2+20r+4)},\\
 \beta-\alpha
 &=-\frac{4(5r^2+2r-4)}{3(r+2)A_r(15r^2+20r+4)}.
\end{align*}
For the physical rank-one query $W=vv^\top$, $v=(1,1,1)^\top$, the
contrast coefficient is $\alpha$. The normalized mean operator is
$\beta I-\tau vv^\top$, whose largest eigenvalue is $\beta$ on
$v^\perp$. Hence the sharp risk is $\max(\alpha,\beta)$.
The transition occurs at
\[
 r_*=(\sqrt{21}-1)/5,\qquad
 \lambda_*=(8+2\sqrt{21})/5.
\]
For $1<\lambda_0<\lambda_*$, all maximizers are pure means with
$q\perp v$. At equality, they are all sums of arbitrary contrasts and
means with $q\perp v$. For $\lambda_0>\lambda_*$, precisely the pure
contrasts maximize. Indeed the remaining mean eigenvalue is
$\beta-3\tau<\beta$. This exact phase change occurs inside the feasible
calibrated domain. It does not contradict Theorem~\ref{thm:sharp-geometry},
whose budgets satisfy $s\ge d$.

\section{Frame geometry at two and three deletions}\label{sec:frames}
\subsection{Two deletions and flat physical queries}
At two deletions, we ask which queries make every nonzero residual
response worst case on an equiangular tight frame.
Assume that $X$ is an existing real unit-row equiangular tight frame
(ETF), with $d\ge2$ and $m\ge d+2$. Thus, for $g=m/d$,
\begin{equation}\label{eq:etf-definition}
 X^\top X=gI,\qquad
 (x_i^\top x_j)^2=\mu^2=\frac{g-1}{m-1}\quad(i\ne j).
\end{equation}
Fix $A_0=\lambda_0I$, $\lambda_0>0$, and use $s=m-2$.
Every such target has a unique calibrated penalty $rI$, also unique
among SPD penalties. Under this law, deleting any pair of distinct
original rows has the same probability. This uses the classical ETF pair geometry
\citep{bodmann2005}; it does not assume that ETFs exist for all $(m,d)$.

Put $b=(g+r)^{-1}$, $a=1-b$, $\delta=a^2-b^2\mu^2$, and
$N=\binom m2$. The residual coefficient is
\begin{equation}\label{eq:etf-cr}
 c_R=\frac{b^2\{(m-2)(a^2+b^2\mu^2)+2ab(g-2)\}}
 {N\delta^2}.
\end{equation}
\begin{theorem}[ETF query criterion]\label{thm:etf-query}
Under these assumptions, fix any nonzero $W\succeq0$ and put
$w=\operatorname{tr}(W)/d$. The following are equivalent:
\begin{enumerate}
\item $x_i^\top Wx_i=w$ for every original row;
\item every nonzero response in $\ker(X^\top)$ globally maximizes risk;
\item the complete maximizing space is $\ker(X^\top)$.
\end{enumerate}
When they hold, $h_{m-2}(X,W)=wc_R$, and the maximizing space has
dimension $m-d$. Singular queries are included.
\end{theorem}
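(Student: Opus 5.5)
The plan is to follow the leave-one-out argument of Theorem~\ref{thm:loo-envelope}, with rank-two deletions in place of rank-one ones. Write $U=U_{ij}=[x_i\ x_j]$ and $H=(g+r)I$, so that $B=bI$ and $L=I-bXX^\top$. On the deleted pair, $I-U^\top BU=\begin{pmatrix}a&-b c_{ij}\\-b c_{ij}&a\end{pmatrix}$ with $c_{ij}=x_i^\top x_j=\pm\mu$. This $2\times2$ matrix, call it $M_{ij}$, has determinant $\delta$; the determinant lemma therefore gives the same weight to every pair, so the law is uniform over pairs with mass $1/N$. Woodbury then gives the exact deletion formula
\[
 w_{-ij}=bX^\top y-bU_{ij}M_{ij}^{-1}(Ly)_{ij}.
\]

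First, I will compute $C_{m-2}$ as the average of the outer products of the corrections minus the square of their mean. This parallels \eqref{eq:loo-centered-operator}, and the mean correction is again of the form $(\text{const})\,bX^\top Ly$. On residuals $Ly=y$ and $bX^\top y=0$, so the centered term vanishes and
\[
 y^\top C_{m-2}y=\frac{b^2}{N}\sum_{i<j}y_{ij}^\top M_{ij}^{-1}U_{ij}^\top WU_{ij}M_{ij}^{-1}y_{ij}.
\]
Expanding this over pairs produces diagonal query energies $x_i^\top Wx_i$ and off-diagonal terms $x_i^\top Wx_j$ weighted by $c_{ij}$. To collapse the off-diagonal sums I will use the ETF identities: for a residual $y$, $\sum_{j\ne i}c_{ij}y_j=-y_i$, together with $\sum_{j}c_{ij}x_j=gx_i$ and $c_{ij}^2=\mu^2$. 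The goal is the identity
\[
 y^\top C_{m-2}y=\frac{b^2}{N\delta^2}\sum_i\omega_i\,y_i^2\qquad(X^\top y=0),
\]
with a linear combination $\omega_i$ of $x_i^\top Wx_i$ and $\operatorname{tr}W$. When $x_i^\top Wx_i=w$ for all $i$, this combination reduces to $w\{(m-2)(a^2+b^2\mu^2)+2ab(g-2)\}$, which accounts for $c_R$ in \eqref{eq:etf-cr}. I expect the bookkeeping for the cross terms $x_i^\top Wx_j$ to be the main obstacle. The obvious fear is that these terms depend on $W$ beyond its diagonal row energies. I will try to show that after summing over $j$ with the signs $c_{ij}$ they telescope, via $\sum_j c_{ij}x_j=gx_i$, into multiples of $x_i^\top Wx_i$ and $y_i^2$.

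Second, for (1)$\Rightarrow$(3) I will split $y=y_R+y_C$ with $y_R\in\ker X^\top$ and $y_C\in\operatorname{range}X$, and construct an exact deficit
\[
 wc_RK-C_{m-2}=\text{(positive definite on }\operatorname{range}X)+\text{(zero on }\ker X^\top),
\]
as in \eqref{eq:loo-deficit-main}. Since $K$ and $L$ are spectral functions of $XX^\top=gP$, they act by scalars on the two subspaces. The cross terms between the subspaces should vanish by the symmetry of the uniform pair law together with calibration. On $\operatorname{range}X$ I will compare with a quantity like $K-L^2$, whose kernel is $\ker X^\top$ by \eqref{eq:loo-full-gap}. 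The difficulty here is to show that $C_{m-2}$ restricted to $\operatorname{range}X$ sits strictly below $wc_RK$. If the direct comparison is too loose, I will reduce it through the whitening symmetry of the ETF to a scalar inequality in $a,b,\mu,g$ and check that inequality directly. Once (3) is established, (2) is immediate; the dimension equals $\dim\ker X^\top=m-d$, and the value is $wc_R$.

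Third, for (2)$\Rightarrow$(1) I will argue as in Corollary~\ref{cor:loo-balance}. If every residual maximizes at value $h$, then Rayleigh stationarity gives $C_{m-2}z=hz$ on $\ker X^\top$. The residual formula above then says that $\operatorname{diag}(\omega_i)-h I$ vanishes as a quadratic form on $\ker X^\top$, hence $P_R\operatorname{diag}(\omega_i)P_R=hP_R$, where $P_R=I-P$ is the residual projection. Taking diagonal entries, and using that every row has the same residual weight $1-d/m$, constrains the $\omega_i$. Taking off-diagonal entries, and using that every off-diagonal entry of $P_R$ equals $\mp\mu d/m$ and is therefore nonzero, should force all $\omega_i$ to be equal, and hence all $x_i^\top Wx_i$ to be equal to $w$. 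Rows of an ETF with $m\ge d+2$ are never coloops, so no row escapes this constraint.
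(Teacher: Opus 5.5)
\paragraph{Gap in (1)$\Rightarrow$(3).}
Your residual-sector formula is correct. For $X^\top y=0$ the cross terms collapse through $\sum_{j\ne i}T_{ij}\,x_i^\top Wx_j=(g-1)\,x_i^\top Wx_i$ and $XX^\top y=XWX^\top y=0$. This gives $\omega_i=\alpha\,x_i^\top Wx_i+b^2\mu^2 g\operatorname{tr}W$ with $\alpha=(m-2)a^2+2ab(g-2)-2b^2\mu^2$.

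The genuine gap is the implication (1)$\Rightarrow$(3), which carries most of the theorem. Neither mechanism you propose there works as stated. Write $P=XX^\top/g$, $P_\perp=I-P$, $\ell=1-bg$ and $D_h=\operatorname{diag}(x_i^\top Wx_i)$.
\begin{itemize}
\item The cross block does not vanish by symmetry of the pair law or by calibration. For an arbitrary query it equals $P_\perp C_{m-2}P=\frac{b^2\ell(\alpha+gab)}{N\delta^2}P_\perp D_hP$. So it vanishes because of flatness, and is nonzero otherwise.
\item More seriously, on $\operatorname{range}X$ the flat-query operator is not scalar. It is $PC_{m-2}P=wu\,P-\tau\,XWX^\top$, with $u=b^2\ell^2(A+2gab)/(N\delta^2)$ and $\tau=b^2\ell^2(e^2-N\theta)/(N^2\delta^2)$. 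Here $\theta=a^2+b^2\mu^2$, $A=(m-2)\theta+2ab(g-2)$ and $e=(m-1)a+b(g-1)$. Since $XWX^\top$ on $\operatorname{range}X$ is similar to $gW$, no ETF symmetry reduces this to one scalar inequality when $W$ is flat but not isotropic. The singular flat queries covered by the theorem are of this kind.
\end{itemize}

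You therefore need two strict signs. The first is $\tau>0$, i.e.\ $e^2>N\theta$, so that the $W$-dependent term can only lower risk. The second is $c_Rk_F>u$, where $k_F=\lambda_0/(g+\lambda_0)$ is the fitted eigenvalue of $K$.

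The LOO-style route through $K-L^2$ cannot give the second sign, because $C_{m-2}\preceq wc_RL^2$ fails. Indeed $u-c_R\ell^2=2gab^3\ell^2/(N\delta^2)>0$, and $(Xv)^\top C_{m-2}(Xv)=wu\lVert Xv\rVert^2$ whenever $Wv=0$. The missing slack is calibration's identity $k_F=\ell+g\eta$, where $\eta=b\ell e/(N\delta)>0$ is the mean-correction coefficient. It yields $c_Rk_F-u>b^3g\ell(A-2a\ell)/(N\delta^2)$. You must then still prove $A>2a\ell$ for every $r>0$.

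\paragraph{The converse is a valid different route.}
The paper argues through the cross block. Stationarity on residuals forces $P_\perp C_{m-2}P=0$. The positive coefficient then gives $[D_h,P]=0$, and $T_{ij}\ne0$ equalizes the row energies.

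Your residual-compression route also closes, despite the paper's remark that it would not. However, the forcing comes from the diagonal entries, not the off-diagonal ones. Absorb $b^2/(N\delta^2)$ into the value $h_\star$. Using $(P_\perp)_{ii}=k/m$ and $(P_\perp)_{il}^2=\mu^2d^2/m^2$ for $l\ne i$, the $(i,i)$ entry of $P_\perp\operatorname{diag}(\omega)P_\perp=h_\star P_\perp$ reads
\[
\omega_i\,\frac{k^2-\mu^2d^2}{m^2}+\frac{\mu^2d^2}{m^2}\sum_l\omega_l=\frac{h_\star k}{m}.
\]
Since $k^2-\mu^2d^2=k\{k(m-1)-d\}/(m-1)>0$, $\omega$ is constant.

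To pass from constant $\omega_i$ to constant row energies you must add $\alpha>0$. As a function of $a/b=g-1+r$, $\alpha/b^2$ is positive at $a/b=g-1$ and increasing beyond it. The identity $\sum_i x_i^\top Wx_i=g\operatorname{tr}W$ then pins the common energy to $w$.

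This route uses only the residual sector. The paper's route reuses the cross-block coefficient from the full operator it needs anyway for sufficiency.
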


\begin{proof}\label{app:etf-proof}
\par\medskip\noindent\textbf{Calibration and the centered operator.}
Assume the ETF conditions of Theorem~\ref{thm:etf-query}. Put
$T=XX^\top$, $P=T/g$, $P_\perp=I-P$, $q=\mu^2=(g-1)/(m-1)$,
and $h=g-1$. Here $q$ is the squared ETF coherence, not a group mean.
Let $k=m-d\ge2$. Then
\begin{equation}\label{eq:etf-retained-positive}
 h^2-q=\frac{k\{k(m-1)-d\}}{d^2(m-1)}>0,
 \qquad k(m-1)-d=(k-1)m>0.
\end{equation}
The Gram matrix after deleting two rows has eigenvalues $g$ with
multiplicity $d-2$, and $h+\mu,h-\mu$. All are positive.

For a positive raw penalty $r$, define
\begin{equation}\label{eq:etf-basic-coefficients}
 b=(g+r)^{-1},\quad a=1-b,\quad \ell=1-bg=br,\quad
 \delta=a^2-b^2q,\quad N=\binom m2,\quad
 L=I-bT=P_\perp+\ell P.
\end{equation}
For a deleted pair $D_0=\{i,j\}$ let $E_{D_0}$ select its two
coordinates, $X_{D_0}=E_{D_0}X$, and $t=T_{ij}$. Its deletion matrix is
\begin{equation}\label{eq:etf-pair-inverse}
 Q_{D_0}=I_2-bE_{D_0}TE_{D_0}^\top
 =\begin{pmatrix}a&-bt\\-bt&a\end{pmatrix},\qquad
 Q_{D_0}^{-1}=\delta^{-1}\begin{pmatrix}a&bt\\bt&a\end{pmatrix}.
\end{equation}
The deletion matrix $Q_{D_0}$ is positive definite. Every retained determinant is
$(g+r)^d\delta$, so the actual law is uniform on the $N$ original
pairs. This is the pair geometry associated with ETFs
\citep{bodmann2005}; it does not use a rotation of the response coordinates.
The fit can be written as
\begin{equation}\label{eq:etf-correction}
 w_S=bX^\top y-z_{D_0}(y),\qquad
 z_{D_0}(y)=bX_{D_0}^\top Q_{D_0}^{-1}E_{D_0}Ly.
\end{equation}
Counting diagonal and off-diagonal pair occurrences gives
\begin{align}
 \sum_{D_0}E_{D_0}^\top Q_{D_0}^{-1}E_{D_0}
  &=\delta^{-1}\{[(m-1)a-b]I+bT\},\nonumber\\
 e&=(m-1)a+b(g-1),\qquad
 \eta=\frac{b\ell e}{N\delta}>0,\nonumber\\
 \mathbb E z_{D_0}(y)&=\eta X^\top y,\qquad
 \mathbb E w_S=f(r)X^\top y,\quad f(r)=b-\eta.\label{eq:etf-mean}
\end{align}
To identify $f$, apply this mean identity to $y=Xv$ and take traces
of the mean fit map. The retained spectrum above gives
\begin{equation}\label{eq:etf-calibration}
 f(r)=\frac1m\left\{
 \frac{(d-2)g}{g+r}+
 \frac{h+\mu}{h+\mu+r}+\frac{h-\mu}{h-\mu+r}\right\}.
\end{equation}
It decreases continuously and strictly from $1/g$ to zero. Hence every
prescribed $\lambda_0>0$ has exactly one positive scalar solution of
$f(r)=1/(g+\lambda_0)$. Theorem~\ref{thm:calibration} gives uniqueness
among all SPD penalties because $s=m-2\ge d$ is feasible. At this
calibration,
\begin{equation}\label{eq:etf-full-loss}
 k_F=1-gf=\frac{\lambda_0}{g+\lambda_0}=\ell+g\eta>0,
 \qquad K=P_\perp+k_FP.
\end{equation}
Thus this construction reaches a prescribed target, not just a target
chosen after the raw penalty.

\par\medskip\noindent\textbf{The operator for any symmetric query.}
Let $H=XWX^\top$, $h_i=H_{ii}$, and $D_h=\operatorname{diag}(h_i)$.
Define
\begin{equation}\label{eq:etf-operator-coefficients}
 \alpha=a^2(m-2)+2ab(g-2)-2b^2q,\quad
 \beta=b^2q,\quad \gamma=ab,\quad \theta=a^2+b^2q.
\end{equation}
The coefficient covariance is
\begin{equation}\label{eq:etf-actual-covariance}
 \operatorname{Cov}(w_S(y))=
 \frac1N\sum_{D_0}z_{D_0}(y)z_{D_0}(y)^\top
   -\eta^2X^\top yy^\top X.
\end{equation}
For a pair $D_0=\{i,j\}$, signed multiplication in
\eqref{eq:etf-pair-inverse} gives
\begin{align}
 (Q_{D_0}^{-1}H_{D_0D_0}Q_{D_0}^{-1})_{11}
  &=\frac{a^2h_i+2ab\,tH_{ij}+b^2q h_j}{\delta^2},\nonumber\\
 (Q_{D_0}^{-1}H_{D_0D_0}Q_{D_0}^{-1})_{12}
  &=\frac{ab\,t(h_i+h_j)+\theta H_{ij}}{\delta^2}.
 \label{eq:etf-signed-product}
\end{align}
The second diagonal entry swaps $i,j$. Since $TH=HT=gH$, summing the
first line over $j\ne i$ has numerator
$[a^2(m-1)+2ab(g-1)-b^2q]h_i+b^2q\operatorname{tr}(H)$.
Each off-diagonal entry occurs in one pair. Therefore
\begin{align}
 M_W&=\frac{\alpha D_h+\beta\operatorname{tr}(H)I
                 +\gamma(TD_h+D_hT)+\theta H}{N\delta^2},\nonumber\\
 C&=b^2LM_WL-\eta^2H.\label{eq:etf-arbitrary-operator}
\end{align}
This is the response operator, not the coefficient covariance matrix.
The last subtraction centers at the actual mean in \eqref{eq:etf-mean}.
Before imposing any flatness, its cross block is
\begin{equation}\label{eq:etf-cross-block}
 P_\perp CP=\frac{b^2\ell(\alpha+g\gamma)}{N\delta^2}P_\perp D_hP.
\end{equation}
Indeed $H$ and $T$ kill residuals, while $L$ acts as the identity on
residuals and as $\ell$ on the fitted space.

\par\medskip\noindent\textbf{Flat-query deficit, strict signs, and converse.}
If $h_i=w=\operatorname{tr}(W)/d$ for every row, then
$D_h=wI$ and $\operatorname{tr}(H)=mw$. Define
\begin{align}
 A&=\alpha+m\beta=(m-2)(a^2+b^2q)+2ab(g-2),\nonumber\\
 c_R&=\frac{b^2A}{N\delta^2},\qquad
 u=\frac{b^2\ell^2(A+2gab)}{N\delta^2},\nonumber\\
 \tau&=\frac{b^2\ell^2(e^2-N\theta)}{N^2\delta^2},\qquad
 \xi=c_Rk_F-u.\label{eq:etf-flat-coefficients}
\end{align}
Substitute $D_h=wI$ in \eqref{eq:etf-arbitrary-operator} and use
$LH=HL=\ell H$. The result is
\begin{equation}\label{eq:etf-flat-deficit}
 C=wc_RP_\perp+wuP-\tau H,\qquad
 wc_RK-C=w\xi P+\tau H.
\end{equation}
The coefficient $c_R$ is \eqref{eq:etf-cr}. It is also the isotropic
residual coefficient: its numerator is
$(m-1)[a^2+b(2a+b)q]-[a^2+2ab+b^2q]=A$,
using $(m-1)q=g-1$.

We now prove all strict signs needed for sufficiency and necessity.
The assumptions imply
\begin{equation}\label{eq:etf-sign-domain}
 m\ge4,\qquad h=g-1\ge\frac2{m-2}>0,\qquad
 mh>2,\qquad a=b(h+r)>bh,\qquad 0<\ell<1.
\end{equation}
First,
\begin{equation}
 A-2a\ell=a[(m-4)a+2b(2g-3)]+(m-2)b^2q>0.
\end{equation}
To check the sign even if $2g-3<0$, the expression in square brackets
is at least $b[(m-4)h+2(2g-3)]=b(mh-2)>0$.
Since $k_F=\ell+g\eta>\ell$, it follows that
\begin{equation}\label{eq:etf-xi-positive}
 \xi=\frac{b^2\{A(k_F-\ell^2)-2gab\ell^2\}}{N\delta^2}
 >\frac{b^3g\ell(A-2a\ell)}{N\delta^2}>0.
\end{equation}
In particular $A>0$, so $c_R>0$.

For the second sign put $t_0=a/b=h+r>h$. Then
\begin{equation}
 \frac{e^2-N\theta}{b^2}=F(t_0),\qquad
 F(t)=\frac{(m-1)(m-2)}2t^2+2(m-1)th+h^2-\frac{mh}2.
\end{equation}
This polynomial increases strictly for $t\ge h>0$, and
$F(h)=mh[(m+1)h-1]/2>0$. Hence $\tau>0$.
Finally,
\begin{equation}
 \frac{\alpha+g\gamma}{b^2}=G(t_0),\qquad
 G(t)=(m-2)t^2+(3h-1)t-\frac{2h}{m-1}.
\end{equation}
For $t\ge h$,
$G'(t)\ge2(m-2)h+3h-1\ge3+3h>0$.
Also $G(h)=h[(m+1)h-1-2/(m-1)]>0$, because
$(m+1)h\ge2(m+1)/(m-2)>2$ and $1+2/(m-1)\le5/3$.
Thus $\alpha+g\gamma>0$ and the scalar in
\eqref{eq:etf-cross-block} is strictly positive. These signs hold for
all positive raw penalties in the stated domain.

For $W\succeq0$, $W\ne0$, we have $H\succeq0$ and $w>0$.
The first term in \eqref{eq:etf-flat-deficit} has kernel
$\ker(X^\top)$ and the second term vanishes on that space. Consequently
\begin{equation}
 \ker(wc_RK-C)=\ker(X^\top).
\end{equation}
There are $m-d>0$ residual dimensions. They attain $wc_R$, while every
response with a fitted component has strictly smaller normalized risk.
The argument does not invert $W$, so singular queries are included.
For $W=wI$, writing $c_F=u-g\tau$ gives the isotropic sector gap
$c_Rk_F-c_F=\xi+g\tau>0$. Positivity of covariance also gives $c_F\ge0$.
Thus no separate sign certificate for that special case is needed here.

For necessity, suppose every nonzero residual is a global maximizer,
with sharp value $h_\star$. Since $K\succ0$, full Rayleigh stationarity
gives $Cz=h_\star Kz=h_\star z$ on residuals. Hence $PCP_\perp=0$,
and by symmetry $P_\perp CP=0$. The positive coefficient in
\eqref{eq:etf-cross-block} implies $P_\perp D_hP=PD_hP_\perp=0$.
Thus $D_h$ commutes with $P$. Entrywise,
$(h_i-h_j)T_{ij}/g=0$. Every off-diagonal $T_{ij}$ is nonzero since
$q>0$, so all $h_i$ agree. Their sum is $g\operatorname{tr}(W)=mw$;
therefore each is $w$. This proves the equivalence in
Theorem~\ref{thm:etf-query}. Constancy of a residual compression alone
would not prove this converse; the cross block is essential.

\end{proof}

The flat symmetric query class has $d(d+1)/2-m$ traceless directions.
Indeed the row projectors have positive definite Frobenius Gram matrix
$(1-q)I+q\mathbf1\mathbf1^\top$, so they are independent; their sum is
$gI$ \citep[Section~5.1]{fickus2018}. Appendix~\ref{app:etf-witness}
gives the full dimension argument and an actual $(16,6)$ frame with a
rank-three nonisotropic flat query. Failure of flatness excludes the
claim that every residual maximizes. It does not classify individual
residual maximizers or the nonflat sharp value.
For the same flat query and prescribed target, single deletion also has
constant row scores, so Theorem~\ref{thm:loo-envelope} gives the same
complete maximizing space. The penalties are separately calibrated.

\subsection{Three deletions under unequal determinant weights}
We now keep isotropic queries and allow one more deletion. Pair deletion
was uniform because every pair has the same squared inner product.
At three deletions the signed triangle product changes the determinant.
The two triangle classes and their incidence counts are classical ETF
geometry \citep[Section~5.2, Lemma~5.19 and Proposition~5.20]{bodmann2005}.
Their determinant masses follow from the triple spectra. We retain those
unequal masses in the centered covariance calculation. The key step is
the strict normalized sector gap that identifies every maximizing response.

The formulas below summarize the first two resolvent moments under this
weighted law. For isotropic queries, they give one covariance coefficient
for residual responses and one for fitted responses. Under the assumptions
below, the theorem proves a strict gap after normalization by the Full loss.
Thus the residual space is exactly the maximizing space.

Assume an existing real unit-row ETF with $d\ge3$ and $k=m-d\ge3$.
Keep $g=m/d$, $T=XX^\top$, $P=T/g$, and
$q=k/[d(m-1)]$ and $\mu=\sqrt q>0$. Fix $A_0=\lambda_0I$, $\lambda_0>0$, and
$W=wI$, $w>0$. At $s=m-3$ the same positive scalar penalty $rI$ is used
in the determinant law and the selected fit. Define
\begin{align}
 z&=g+r,\quad h=z-1,\quad
 t=\frac{2(g-1)(g-2)}{(m-1)(m-2)},\quad D=h^3-3hq-t,\nonumber\\
 H_1&=\frac{d-3}{z}+\frac{3(h^2-q)}D,\qquad
 H_2=\frac{d-3}{z^2}+
       \frac{3[h(h^2-q)+3t]}{(h^2-4q)D},\label{eq:triple-moments}\\
 M&=d-rH_1,\quad B=d-2rH_1+r^2H_2,\quad
 k_F=1-\frac Md,\nonumber\\
 c_R&=\frac{H_1-rH_2-B/g}{k},\qquad
 c_F=\frac Bm-\frac{gM^2}{m^2}.\label{eq:triple-sectors}
\end{align}
The symbol $h$ in these formulas is a scalar resolvent parameter, not a
sharp risk. All quantities are evaluated at the unique calibrated root
$M(r)/m=1/(g+\lambda_0)$.

\begin{theorem}[Isotropic ETF geometry at three deletions]
\label{thm:etf-triple}
Under the preceding assumptions, every prescribed $\lambda_0>0$ has
exactly one calibrated positive scalar penalty, also unique among all SPD
penalties. The centered isotropic response operator is
\[
 C_{m-3}(X,I)=c_R(I-P)+c_FP,\qquad
 K_X=(I-P)+k_FP,\quad k_F=\frac{\lambda_0}{g+\lambda_0}.
\]
For every positive $r$, $c_Rk_F-c_F>0$ and $c_F\ge0$. Consequently
\begin{equation}\label{eq:triple-deficit}
 h_{m-3}(X,wI)=wc_R,\qquad
 wc_RK_X-C_{m-3}(X,wI)=w(c_Rk_F-c_F)P.
\end{equation}
The complete maximizing response space is $\ker(X^\top)$, of dimension
$m-d$. Every response with a nonzero fitted component has strictly
smaller normalized risk.
\end{theorem}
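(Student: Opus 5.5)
The plan follows the template of the two-deletion proof of Theorem~\ref{thm:etf-query}, now with a nonuniform law. First I record the triple geometry. For a deleted triple $D_0=\{i,j,l\}$ the deletion matrix is $Q_{D_0}=I_3-bE_{D_0}TE_{D_0}^\top$, with $b=1/z$. Its off-diagonal entries are $-bT_{ij}$, where $T_{ij}=\pm\mu$. The retained determinant is $z^d\det Q_{D_0}$, and $\det Q_{D_0}$ depends only on the sign of the triangle product $T_{ij}T_{jl}T_{li}=\pm\mu^3$. This gives the two triangle classes of \citet{bodmann2005}, with masses $z^{-3}(h^3-3hq\mp2\mu^3)$ up to the common factor. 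Using their incidence counts, I would check that the normalized sum over triples of $E_{D_0}^\top Q_{D_0}^{-1}E_{D_0}$ is a combination of $I$ and $T$ alone. The required counts are how many triples of each class contain a given pair, together with the average product sign for a pair. The constant $t$ and the denominator $D=h^3-3hq-t$ encode exactly these averaged masses. By the rotation-free argument used in \eqref{eq:etf-mean}, the mean fit is then $f(r)X^\top y$.

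As in \eqref{eq:etf-calibration}, I would identify $f$ by taking traces on $y=Xv$. The mean fit map has trace $\mathbb E\operatorname{tr}\{(G_S+rI)^{-1}G_S\}=d-r\,\mathbb E\operatorname{tr}(G_S+rI)^{-1}$. The retained Gram matrix has eigenvalue $g$ with multiplicity $d-3$, and its other three eigenvalues come from the $3\times3$ block $I-E_{D_0}TE_{D_0}^\top/\ldots$. Averaging the resolvent trace under the determinant weights gives $H_1$, so $mf=M$. The determinant weights cancel the denominators, which is why $D$ appears linearly. I would then show that $M(r)/m$ decreases strictly from $1/g$ to $0$, for example by writing $r\mathbb E\operatorname{tr}(G_S+r)^{-1}$ as a determinant-weighted average and using the monotone odds argument of Theorem~\ref{thm:calibration}. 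This gives a unique scalar root. Since $s=m-3\ge d$, uniqueness among all SPD penalties follows from Theorem~\ref{thm:calibration}.

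For the operator with $W=wI$, the invariance of the law under the ETF's incidence structure forces $C=F_S$-moment$-F_0$-moment to commute with $P$. I would argue this directly: $\mathbb E F_S^\top F_S$ is a polynomial in $T$ after averaging, since the triple sums of $E^\top Q^{-1}Q^{-1}E$ again reduce to $I$ and $T$ by the class counts. Hence $C=c_R(I-P)+c_FP$. The coefficients can then be read off by traces, which avoids entrywise sums. On the fitted space, $y=Xv$ gives $\operatorname{tr}$ of $\mathbb E(w_S-w_0)(w_S-w_0)^\top$ per unit, namely $\mathbb E\operatorname{tr}\{G_S(G_S+r)^{-2}G_S\}\cdot$ normalization minus the squared mean. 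This yields $c_F=B/m-gM^2/m^2$, with $H_2$ the averaged second resolvent moment. The second moment on the $3$-block needs the factor $(h^2-4q)$ from the cofactor algebra. On the full space, $\operatorname{tr}C=\mathbb E\operatorname{tr}(F_S^\top F_S)-\operatorname{tr}F_0^\top F_0$ equals $kc_R+dc_F$. This produces $c_R=(H_1-rH_2-B/g)/k$, because $\operatorname{tr}(F_S^\top F_S)=\operatorname{tr}G_S(G_S+r)^{-2}$. Nonnegativity $c_F\ge0$ is automatic, since $c_F$ is the variance on fitted responses. With $K=(I-P)+k_FP$ and $k_F=\lambda_0/(g+\lambda_0)=1-M/d$, the deficit \eqref{eq:triple-deficit} is immediate. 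Given the strict gap, its kernel is $\ker P=\ker X^\top$, which has dimension $m-d$.

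The main obstacle is the strict sign $c_Rk_F-c_F>0$ for every $r>0$, under $d,k\ge3$ and the actual unequal masses. I would clear denominators using $z$, $D$, $h^2-4q$ and $k$. Then I would substitute $h=g-1+r$ and eliminate $q$ and $t$ via $q=k/[d(m-1)]$ and the expression for $t$ in $g,m$. This gives a polynomial in $r$ with coefficients rational in $(d,k)$. I would first try to show, as for $F$ and $G$ in the two-deletion proof, that all coefficients of the expansion in $r$ and in the shifted variables $d-3$, $k-3$ are nonnegative, with at least one positive. If that fails, I would split the analysis: a positive leading coefficient with monotonicity for $r\ge g-1$, and a separate bound near $r=0$ using $h>2\mu$, which follows from $h^2-q>0$ as in \eqref{eq:etf-retained-positive} at three deletions. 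I expect the final check to need an exact computer-assisted certificate like those in Appendix~\ref{app:geometry}.
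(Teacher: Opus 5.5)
Your structural steps are sound and follow the paper closely. These are the two triangle classes and their masses, and the reduction of every weighted triple sum of a function of the local Gram $J$ to a combination of $I$ and $T$ via the per-pair and per-vertex incidence counts. They also include the trace identifications $c_F=B/m-gM^2/m^2$ and $c_R=(H_1-rH_2-B/g)/k$, and the deficit and kernel argument. You identify the mean map by the direct triple sum rather than by differentiating \eqref{eq:triple-partition}; both work. The quadratic atom that actually appears is $E_{D_0}^\top Q_{D_0}^{-1}JQ_{D_0}^{-1}E_{D_0}$, not $E_{D_0}^\top Q_{D_0}^{-2}E_{D_0}$. It is again affine in $J$, however, and averages the same way.

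The gap is the strict sign $c_Rk_F-c_F>0$. Your plan for it would fail, not merely remain unfinished, because the inequality is false on the formal domain $d,k\ge3$.

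\emph{The counterexample.} Take $(d,k)=(7,3)$. Then $(g-1)^2-4q=-1/147$, so $h^2-4q$ vanishes at $r_p=2/\sqrt{21}-3/7>0$. At that point $D=2\mu^3-t>0$ and $h(h^2-q)+3t=6\mu^3-1/49\neq0$, so $H_2$ has a genuine pole. Since $H_2$ enters the gap with coefficient $-r^2[(1+r/g)H_1/(dk)+1/m]<0$, the formal gap tends to $-\infty$ as $r\downarrow r_p$, even though $h>2\mu$ there. Hence neither a positive-coefficient certificate in $(r,d-3,k-3)$ nor any bound using only $h>2\mu$ at the given $r$ can succeed. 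Separately, $h>2\mu$ does not follow from $h^2-q>0$, which only gives $h>\mu$.

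\emph{The missing idea.} The sign depends on which parameter pairs are realized by actual ETFs. The paper proves $g-1>2\mu$ for every actual frame. For $k\ge4$ this comes from $(g-1)^2-4q=k[(k-4)d+k(k-1)]/[d^2(m-1)]>0$. For $k=3$ it passes to the Naimark complement, a unit-row ETF of $m$ vectors in $\mathbb R^3$. Its linearly independent rank-one projectors force $m\le6$, hence $d=k=3$. Only after this reduction is the cleared polynomial $\mathcal H$ certified, with the shift $d=u+3$, $k=v_0+4$. The single pair $d=k=3$ is handled by an explicit rational function.

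Some restriction of $(d,k)$ to realizable frames must precede any coefficient check. One option is the Naimark argument above. Another is $g-1\ge2\mu$: both triangle classes occur, and $G_S\succeq0$ on a positive triangle, so this already excludes $k=3$, $d\ge7$.
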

\begin{proof}
\par\medskip\noindent\textbf{Triangle classes and positivity.}
For a deleted triple $D_0=\{i,j,l\}$ put
$e=\operatorname{sign}(T_{ij}T_{jl}T_{li})\in\{-1,1\}$.
A diagonal sign switch makes the three off-diagonal entries of its local
Gram $J=E_{D_0}TE_{D_0}^\top$ all equal to $e\mu$.
Thus $J$ has eigenvalues $1+2e\mu,1-e\mu,1-e\mu$.
The retained Gram has eigenvalues $g$ repeated $d-3$ times,
$g-1-2e\mu$ once, and $g-1+e\mu$ twice. Therefore
\begin{equation}\label{eq:triple-det}
 \det(G_S+rI)=z^{d-3}\delta_e,\qquad
 \delta_e=(h-2e\mu)(h+e\mu)^2=h^3-3hq-2e\mu^3.
\end{equation}
The $(i,j)$ entry of $T^2=gT$, multiplied by $T_{ij}$, gives
\[
 \sum_{l\ne i,j}T_{ij}T_{il}T_{jl}=(g-2)q.
\]
Hence the number of type-$e$ triples through every pair, through every
vertex, and in total is respectively
\begin{equation}\label{eq:triple-incidence}
 n_e=\frac{m-2+e(g-2)/\mu}{2},\quad
 \nu_e=\frac{(m-1)n_e}{2},\quad N_e=\frac{m(m-1)n_e}{6}.
\end{equation}
Both classes occur, since
\[
 (m-2)^2q-(g-2)^2=\frac{m^2(d-1)(k-1)}{d^2(m-1)}>0.
\]
With $N=\binom m3$, summing \eqref{eq:triple-det} yields
\begin{equation}\label{eq:triple-law}
 Z_s(r)=Nz^{d-3}D,\qquad
 \mathbb P(S=D_0^c)=\frac{\delta_e}{ND}.
\end{equation}
All triples in one class have the same individual probability.
This probability differs between the two classes.

Every retained Gram is positive definite even at $r=0$. For $k\ge4$,
\[
 (g-1)^2-4q=\frac{k[(k-4)d+k(k-1)]}{d^2(m-1)}>0.
\]
For $k=3$, complete the columns of $X/\sqrt g$ to an orthogonal matrix
and multiply its remaining columns by $\sqrt{m/k}$. This gives a unit-row
ETF in dimension three. Its row projectors have positive definite
Frobenius Gram matrix $(1-q')I+q'\mathbf1\mathbf1^\top$, with $q'<1$,
so they are linearly independent in the six-dimensional space of
symmetric $3$ by $3$ matrices. Therefore $m\le6$. Since $d\ge3$, the
only possible pair is $d=k=3$, where $(g-1)^2-4q=1/5$.
Thus $g-1>2\mu$ in every actual case. In particular $z$,
$h\pm2\mu$, $\delta_e$, $D$, and $h^2-4q$ are positive.
This is the usual Naimark-complement and projector-dimension argument;
it does not assert existence at other formal parameter pairs.

\par\medskip\noindent\textbf{Calibration with the target fixed first.}
Expanding the selected determinants into original-row minors and counting
the supersets of each minor gives
\begin{equation}\label{eq:triple-partition}
 Z_s(X,r)=\sum_{j=0}^d\binom{m-j}{s-j}r^{d-j}e_j(X^\top X).
\end{equation}
At $X^\top X=gI$, the matrix derivative of $\log Z_s$ with respect to
the Gram is scalar. Differentiating instead with respect to each original
row identifies the entire mean map as $f(r)X^\top$. Taking the trace of
its product with $X$ gives
\[
 mf(r)=\mathbb E\operatorname{tr}[G_S(G_S+rI)^{-1}]
       =d-rZ_s'(r)/Z_s(r)=M(r).
\]
Every coefficient in \eqref{eq:triple-partition} is positive, because
$s=m-3\ge d$. Under the distribution on $j$ proportional to those
terms, $M=\mathbb E j$ and
\[
 \frac{dM}{d\log r}=-\operatorname{Var}(j)<0,\qquad
 M(0+)=d,\quad M(+\infty)=0.
\]
Thus $M/m=1/(g+\lambda_0)$ has a unique positive solution for every
fixed positive target. Theorem~\ref{thm:calibration} gives uniqueness
among SPD penalties. In particular $k_F=1-M/d=rH_1/d>0$.

\par\medskip\noindent\textbf{The centered operator has two sectors.}
Put $L=I-T/z$ and $Q_{D_0}=I_3-J/z$. Solving the retained normal equation
gives
\[
 w_S=z^{-1}X^\top y-z_{D_0}(y),\qquad
 z_{D_0}(y)=z^{-1}X_{D_0}^\top Q_{D_0}^{-1}E_{D_0}Ly.
\]
The mean correction is $(z^{-1}-f)X^\top y$. To compute its second moment,
put $a=e\mu$ and define
\begin{align*}
 A_e&=\frac{z^2(1+a-2q)(a-2h)}{(h-2a)^2(h+a)^2},\\
 B_e&=\frac{z^2(h^2+2h-a+2q)}{(h-2a)^2(h+a)^2}.
\end{align*}
Interpolation of $z^2x/(z-x)^2$ at $1+2a,1-a$ gives
$Q_{D_0}^{-1}JQ_{D_0}^{-1}=A_eI_3+B_eJ$.
Its diagonal entries are $A_e+B_e$, and its $(i,j)$ entry is $B_eT_{ij}$.
Using the exact probabilities and the incidences in
\eqref{eq:triple-incidence}, its embedded average is $A_*I+B_*T$, where
\begin{align*}
 A_*&=\frac1{ND}\sum_{e=\pm1}n_e\delta_e
       \left[\frac{m-1}{2}A_e+\frac{m-3}{2}B_e\right],\\
 B_*&=\frac1{ND}\sum_{e=\pm1}n_e\delta_e B_e.
\end{align*}
Subtracting the actual correction mean square yields
\begin{equation}\label{eq:triple-local-operator}
 C=z^{-2}L(A_*I+B_*T)L-(z^{-1}-f)^2T.
\end{equation}
Since $T$ is zero on residuals and equals $gI$ on fitted responses,
this proves the asserted two-sector form. No transitivity of a frame
symmetry group is needed.

The retained spectra and actual class weights give
$H_j=\mathbb E\operatorname{tr}(G_S+rI)^{-j}$, $j=1,2$.
For example,
\begin{align*}
 \delta_e\left[\frac1{h-2a}+\frac2{h+a}\right]&=3(h^2-q),\\
 \delta_e\left[\frac1{(h-2a)^2}+\frac2{(h+a)^2}\right]
 &=\frac{3(h^2-2ha+3q)}{h-2a}.
\end{align*}
Averaging the second line gives
$3[h(h^2-q)+3t]/[(h^2-4q)D]$, proving
\eqref{eq:triple-moments}. In particular $H_2$ is not simply $-H_1'$;
the law changes with $r$.

For the original-coordinate fit map
$F_S=(G_S+rI)^{-1}X_S^\top E_S$, let $O=\mathbb EF_S^\top F_S$.
Then
\[
 \operatorname{tr}O=H_1-rH_2,\qquad
 \operatorname{tr}(PO)=B/g,\qquad C=O-f^2T.
\]
The first identity follows from $F_SF_S^\top=(G_S+rI)^{-1}G_S(G_S+rI)^{-1}$;
the second uses $F_SX=(G_S+rI)^{-1}G_S$.
Dividing the residual and fitted traces by $k$ and $d$ proves
\eqref{eq:triple-sectors}. Actual covariance gives $c_F\ge0$.
The term $gM^2/m^2$ is the necessary fitted mean-square subtraction.

\par\medskip\noindent\textbf{Strict gap and complete equality.}
Appendix~\ref{app:triple-certificate} constructs an integer polynomial
$\mathcal H(r,d,k)$ directly from these moments and proves the identity
\[
 c_Rk_F-c_F=\frac{3d^3r\mathcal H}{m\mathcal Z^2\mathcal E\mathcal F^2},
\]
with all denominator factors positive. After $d=u+3$, $k=v+4$, the
polynomial has 581 positive integer coefficients, including a positive
constant. The appendix gives its defining expression, the exact
division and the common-denominator derivation. The ancillary checker
reconstructs the polynomial before comparing every coefficient.
For the only remaining actual case $d=k=3$, the gap is
\[
 \frac{r(5r^2+10r+4)(5r^3+25r^2+30r+6)}
 {2(r+1)^2(5r^2+10r+1)(5r^2+10r+2)^2}>0.
\]
This is an exact algebraic proof on the whole stated domain.
Finally, for $y=y_R+y_F$ with residual and fitted components, the risk
ratio is
\[
 \frac{w[c_R\lVert y_R\rVert^2+c_F\lVert y_F\rVert^2]}
 {\lVert y_R\rVert^2+k_F\lVert y_F\rVert^2}.
\]
Its deficit from $wc_R$ times the denominator is
$w(c_Rk_F-c_F)\lVert y_F\rVert^2$. The coefficient is strictly positive,
and $k>0$ ensures nonzero residuals. This proves both the sharp value
and every equality case.
\end{proof}

For a concrete unequal-weight example, use the $(16,6)$ frame in
Appendix~\ref{app:etf-witness}. At $r=1/3$ there are 320 positive and
240 negative triangles. Each positive triangle has probability
$49/27680$, while each negative triangle has probability $5/2768$.
Equations~\eqref{eq:triple-moments}--\eqref{eq:triple-sectors} give
\[
 \lambda_0=\frac{28712}{63969},\quad
 c_R=\frac{46863}{885760},\quad
 c_F=\frac{1291585}{1654953984},\quad
 c_Rk_F-c_F=\frac{452910521}{66198159360}>0.
\]
In this example, the displayed $\lambda_0$ is the prescribed target
for every subset. The theorem separately proves calibration for every prescribed
positive target.

Combining the pair, triple, and LOO results gives the same complete
space $\ker(X^\top)$ at $s=m-1,m-2,m-3$, for the same existing frame,
scalar target, and isotropic query in the triple theorem's domain.
Each budget uses its own calibrated penalty. The sharp values need not
agree or be monotone. The three-deletion theorem makes no assertion for
nonisotropic queries or for other budgets.

\section{A same-sample ridge--HT mixture}\label{sec:mixture}
At a balanced design, the strict gap between contrast and mean risk lets us
change the estimator while keeping the same sampled rows and target.
We use this freedom to lower the worst-response risk for every nonzero
positive semidefinite query. Calibration uniqueness fixes the penalty within
the matched ridge family. It does not rule out a different unbiased
estimator on the same subset.
In the balanced model, let $a=n/(n+\lambda_0)$, $\mu=s/d$, and $c_H=a/\mu$.
Each original row has inclusion probability $s/m$. Thus the estimate
\[
 (w_H)_j=c_H\sum_{i\in S\cap j}\varepsilon_i y_i
\]
and the matched ridge estimate $w_R$ both have mean $w_0$ under the
same determinant law. Consider $w(t)=(1-t)w_R+tw_H$.

Put $u_t(k)=(1-t)/(r_s+k)+tc_H$ and $g_t(k)=ku_t(k)$.
Define $\alpha_s(t),\beta_s(t),\tau_s(t)$ by
\eqref{eq:geometry-coefficients}--\eqref{eq:geometry-tau},
replacing $1/(r_s+k)$ by $u_t(k)$
and $f(k)$ by $g_t(k)$. They are the contrast and mean coefficients
in the same covariance decomposition \eqref{eq:block-covariance} for
$w(t)$, and equal the ridge coefficients at $t=0$.
Their formulas are given in \eqref{eq:mixture-coeff} below.

\begin{proposition}[A finite improving mixture]\label{prop:mixture}
Under the assumptions of Theorem~\ref{thm:sharp-geometry}, an explicit
$T_s>0$, depending only on the features, target, and budget, satisfies
the following. For every $0<t\le T_s$ and every nonzero $W\succeq0$,
\begin{align*}
 \sup_{y\ne0}\frac{\operatorname{tr}(W\operatorname{Cov}(w(t)))}{J_0(y)}
 &=\alpha_s(t)\max_jW_{jj}\\
 &<\alpha_s(0)\max_jW_{jj}.
\end{align*}
The complete maximizing raw response space remains $\mathcal E$.
\end{proposition}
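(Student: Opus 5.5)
The plan is to rerun the proof of Theorem~\ref{thm:sharp-geometry} for the mixture, with the coefficients $\alpha_s(t),\beta_s(t),\tau_s(t)$ in place of the ridge ones. The choice of $T_s$ is driven by three requirements: the sector signs persist, the covariance keeps its block form, and the contrast coefficient strictly decreases.

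\textbf{Block form.} Conditional on the counts $K$, the selected rows in each group are still independent uniform subsets. Undoing signs, the mixture fit is
\[
 w(t)_j=g_t(K_j)q_j+u_t(K_j)T_j .
\]
This follows because $(w_H)_j=c_H(K_jq_j+T_j)$ and $(w_R)_j=f(K_j)q_j+T_j/(r_s+K_j)$.

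\textbf{Unbiasedness.} Each original row has inclusion probability $s/m$, since the count law \eqref{eq:count-law} is exchangeable across groups and uniform within them. Hence $\mathbb E w_H=w_0$. Calibration gives $\mathbb E w_R=w_0$, so $\mathbb E g_t(K_1)=a$ for every $t$.

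\textbf{Covariance.} The law of total covariance, used exactly as for \eqref{eq:block-covariance}, gives
\[
 \operatorname{Cov}(w(t))=\alpha_s(t)\operatorname{diag}(E_j)+b\beta_s(t)\operatorname{diag}(q_j^2)-b\tau_s(t)qq^\top .
\]
Each coefficient is an explicit quadratic polynomial in $t$, namely \eqref{eq:mixture-coeff}. Here $\alpha_s(t)=\mathbb E\,\omega(K_1)u_t(K_1)^2$ with weight $\omega(k)=k(n-k)/[n(n-1)]$.

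\textbf{Deficit and choice of $T_s$.} The deficit identity \eqref{eq:sharp-deficit} holds verbatim with $t$-dependent coefficients. So the sharp value is $\alpha_s(t)\max_jW_{jj}$ and the maximizing space is exactly $\mathcal E$, provided two conditions hold:
\[
 \alpha_s(t)>\beta_s(t)\quad\text{and}\quad \tau_s(t)\ge0 .
\]
The strict inequality makes every mean coefficient $\alpha_s(t)c-\beta_s(t)W_{jj}$ positive. The condition on $\tau_s(t)$ makes the last term nonnegative.

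At $t=0$ both conditions hold strictly, by Theorem~\ref{thm:sharp-geometry}. Since $\alpha_s-\beta_s$ and $\tau_s$ are quadratics in $t$, each stays positive up to its first positive root, or forever if it has none. This gives explicit positive thresholds $t_1,t_2$ depending only on $n,d,\lambda_0,s$.

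\textbf{Strict improvement.} We also need $\alpha_s(t)<\alpha_s(0)$. Since $\alpha_s(t)$ is a convex quadratic, it suffices that $\alpha_s'(0)<0$. Then $\alpha_s(t)<\alpha_s(0)$ on $(0,t_3)$ with $t_3=-2\alpha_s'(0)/\alpha_s''(0)$. Finally set $T_s=\tfrac12\min(t_1,t_2,t_3)$, or any explicit fraction of that minimum.

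\textbf{Main obstacle: the sign of $\alpha_s'(0)$.} Compute
\[
 \alpha_s'(0)=2\,\mathbb E\!\left[\frac{\omega(K)}{r+K}\Big(c_H-\frac1{r+K}\Big)\right].
\]
Calibration, $\mathbb E f(K)=a$ with $\mathbb E K=s/d$, rewrites $c_H=\mathbb E[K/(r+K)]/\mathbb E K$. The needed sign becomes the correlation inequality
\[
 \mathbb E K\;\mathbb E\frac{\omega(K)}{(r+K)^2}\;>\;\mathbb E\frac{K}{r+K}\;\mathbb E\frac{\omega(K)}{r+K}.
\]
I would first try a Chebyshev/FKG-type argument: $\phi(k)=1/(r+k)$ is decreasing, and I would compare the tilted measures $\omega\,dP$ and $k\,dP$. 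The difference is
\[
 \mathbb E_P\!\left[\omega\phi^2\right]\mathbb E_P K-\mathbb E_P[K\phi]\,\mathbb E_P[\omega\phi].
\]
I would write it as a symmetrized double sum over independent copies $K,K'$, with summand $\phi(K)\phi(K')\,\omega(K)K'\,[\phi(K)-\phi(K')]$ up to symmetrization. Then I would try to show the pairwise terms have the right sign using log-concavity of the count law. The count law is log-concave, because binomial times linear factors are, and Newton's inequality was already used for $\tau>0$.

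If a clean sign argument fails, the fallback is the route already taken for $\alpha_s>\beta_s$: express $\alpha_s'(0)$ via the deletion coefficients $A,T,Z$, and certify its sign with exact polynomial bounds. That certificate is where I expect most of the work to lie.
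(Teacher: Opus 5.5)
Most of your reduction is sound: the block form $w(t)_j=g_t(K_j)q_j+u_t(K_j)T_j$, unbiasedness, the $t$-dependent covariance and deficit, and thresholds from the first positive roots of the quadratics $\alpha_s(t)-\beta_s(t)$ and $\tau_s(t)$. The paper instead gets $\tau_s(t)>0$ on all of $[0,1]$ from strict monotonicity of $g_t$ and the conditional stochastic ordering of the counts, and takes $T_s=\min\{1/2,D/(2Q),g/(2H_*)\}$. Your root-based choice is equally explicit, with $t_3=\infty$ if $\alpha_s''(0)=0$.

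The genuine gap is the step you call the main obstacle. You never prove $\alpha_s'(0)<0$, yet the strict inequality $\alpha_s(t)<\alpha_s(0)$ rests entirely on it. Your instinct to compare tilted measures is right, but neither log-concavity of the count law nor a polynomial certificate is needed. Also, your double-sum summand carries a spurious factor $\phi(K')$. Its expectation is $\mathbb E[\omega\phi^2]\,\mathbb E[K\phi]-\mathbb E[\omega\phi]\,\mathbb E[K\phi^2]$, which is not your target difference.

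The missing idea is to factor the weight as $\omega(k)=k\cdot(n-k)/[n(n-1)]$. Then pass to the size-biased law $\nu(k)=k\,\mathbb P(K_1=k)/\mu$, where $\mu=s/d$. Write $h=\phi$, $h(k)=1/(r_s+k)$. Calibration then says $c_H=\mathbb E f(K_1)/\mu=\mathbb E_\nu h(K_1)$, and
\[
 \alpha_s'(0)=2\,\mathbb E\bigl[\omega(K_1)h(K_1)\{c_H-h(K_1)\}\bigr]
 =-\frac{2\mu}{n(n-1)}\operatorname{Cov}_\nu\bigl((n-K_1)h(K_1),\,h(K_1)\bigr).
\]
Both functions are strictly decreasing on positive counts. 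Chebyshev's covariance inequality therefore makes this covariance nonnegative under any count law. Equivalently, your symmetrization done correctly, with $K,K'$ independent copies of $K_1$, has summands $\frac{KK'}{2n(n-1)}[\phi(K)-\phi(K')][(n-K)\phi(K)-(n-K')\phi(K')]\ge0$.

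For strictness you must still check that $\nu$ is not a point mass. The positive support $\max(1,s-(d-1)n),\dots,\min(n,s)$ of $K_1$ contains two consecutive integers, because $n\ge2$, $s\ge d\ge2$ and $s<nd$. With this step inserted, the rest of your argument goes through.
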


\begin{proof}
Fix a balanced design, $d\le s<nd$, and its exactly calibrated
$r=r_s>0$. All expectations below use the actual count law
\eqref{eq:count-law}. Put $a=n/(n+\lambda_0)$,
$b=n(1-a)$, $\mu=s/d$, and $c_H=a/\mu$. Define
\[
 \begin{gathered}
 h(k)=\frac1{r+k},\qquad f(k)=kh(k),\\
 \delta(k)=c_H-h(k),\qquad v(k)=k\delta(k),\\
 h_t(k)=h(k)+t\delta(k),\qquad g_t(k)=f(k)+tv(k).
 \end{gathered}
\]
The mixture is
$(w(t))_j=h_t(K_j)\sum_{i\in S\cap j}\varepsilon_i y_i$.
By symmetry every original row has inclusion probability $s/(nd)$.
Hence $\mathbb E w_H=aq=w_0$, and linearity proves
$\mathbb E w(t)=w_0$ for every real $t$.

\subsection{The covariance quadratics}
Conditional on all counts, the selected contrast sums are independent
across groups, have mean zero, and variance
$K_j(n-K_j)E_j/[n(n-1)]$. Thus, with
$\omega(k)=k(n-k)/[n(n-1)]$,
\begin{align}
 \alpha(t)&=\mathbb E[\omega(K_1)h_t(K_1)^2],\nonumber\\
 \beta(t)&=\frac{\mathbb E[(g_t(K_1)-g_t(K_2))^2]}{2b},\qquad
 \tau(t)=-\frac{\operatorname{Cov}(g_t(K_1),g_t(K_2))}{b},\label{eq:mixture-coeff}\\
 \operatorname{Cov}(w(t))
 &=\alpha(t)\operatorname{diag}(E_j)
      +b\beta(t)\operatorname{diag}(q_j^2)-b\tau(t)qq^\top.\nonumber
\end{align}
This calculation includes the covariance between the ridge and
inclusion-weighted estimates on their shared subset. Their cross
contrast coefficient is $c_H\mathbb E[\omega(K_1)h(K_1)]$;
their mean cross covariance is
$\operatorname{Cov}(f(K_i),c_HK_j)$.
Exchangeability makes that cross covariance symmetric. No independence
between the two estimates is assumed.

For $0\le t\le1$, the score is strictly increasing:
\[
 \frac d{dk}g_t(k)=\frac{(1-t)r}{(r+k)^2}+tc_H>0.
\]
The actual-law conditional ordering proved at the start of
Appendix~\ref{app:geometry} therefore gives $\tau(t)>0$ throughout
this interval.

The positive marginal support of $K_1$ is
$\max(1,s-(d-1)n),\ldots,\min(n,s)$.
It contains two consecutive positive integers: if its upper endpoint is
$n$, then $n-1$ is possible since $s<nd$; otherwise the upper endpoint
is $s\ge2$ and the lower positive endpoint is one.
Consider the size-biased probability $\nu(k)=k\mathbb P(K_1=k)/\mu$.
Calibration says
\[
 \mathbb E_\nu h(K_1)=\frac{\mathbb E f(K_1)}\mu=c_H.
\]
Differentiating the finite sum defining $\alpha$ gives
\begin{equation}\label{eq:negative-derivative}
 \alpha'(0)=-\frac{2\mu}{n(n-1)}
       \operatorname{Cov}_\nu((n-K_1)h(K_1),h(K_1))<0.
\end{equation}
Both functions inside this covariance are strictly decreasing on
positive counts, with derivatives $-(n+r)/(r+k)^2$ and
$-1/(r+k)^2$. The independent-copy covariance identity and the two
distinct positive support points make the covariance strictly positive.

Put
\[
 A=\alpha(0),\quad D=-\alpha'(0)>0,\quad
 Q=\mathbb E[\omega(K_1)\delta(K_1)^2].
\]
Then $\alpha(t)=A-Dt+Qt^2$. Also $Q>0$: if it vanished,
$\delta$ would vanish on every positive-$\omega$ atom, making the
derivative zero in contradiction to \eqref{eq:negative-derivative}.
Similarly,
\begin{align*}
 B_0&=\beta(0),\\
 B_1&=\mathbb E[(f(K_1)-f(K_2))(v(K_1)-v(K_2))]/b,\\
 B_2&=\mathbb E[(v(K_1)-v(K_2))^2]/(2b)\ge0,\\
 \beta(t)&=B_0+B_1t+B_2t^2.
\end{align*}
Theorem~\ref{thm:sharp-geometry} supplies the strict gap $g=A-B_0>0$.
Define the finite step
\begin{equation}\label{eq:mixture-step}
 H_*=D+|B_1|+Q+B_2,\qquad
 T_s=\min\left\{\frac12,\frac{D}{2Q},\frac{g}{2H_*}\right\}>0.
\end{equation}
For $0<t\le T_s$,
\[
 \alpha(t)\le A-Dt/2<A,\qquad
 \alpha(t)-\beta(t)
 \ge g-[D+|B_1|+Q+B_2]t\ge g/2>0,
\]
where $t\le1$ bounds the quadratic terms in the second inequality.
The gap, rather than a limiting continuity argument alone, gives this
explicit interval.

For $c=\max_jW_{jj}>0$, the exact deficit is
\begin{align*}
 \alpha(t)cJ_0-\operatorname{tr}(W\operatorname{Cov}(w(t)))
 ={}&\alpha(t)\sum_j(c-W_{jj})E_j\\
 &+b\sum_j[\alpha(t)c-\beta(t)W_{jj}]q_j^2
    +b\tau(t)q^\top Wq .
\end{align*}
Every mean coefficient is at least $bc[\alpha(t)-\beta(t)]>0$.
Also $\alpha(t)>0$, since an intermediate count has positive probability
and $h_t$ is positive. Equality therefore holds exactly on the same
nonzero contrast space $\mathcal E$. This proves
Proposition~\ref{prop:mixture}, including arbitrary singular and
nondiagonal PSD queries and all ties. For a finite grid of budgets,
$\min_sT_s>0$ supplies one common weight.

\end{proof}
We call $0<t\le T_s$ the protected interval: the contrast risk falls and
the contrast--mean gap stays positive there.
The mixture uses the same sample, labels, target and law. The weight uses
features only. It lowers each query's worst-response risk, but can raise
an individual response's risk. The inclusion-weighted endpoint uses the
Horvitz--Thompson principle \citep{horvitz1952}. Combining dependent
unbiased estimates is a standard construction \citep{lavancier2016}.
Appendix~\ref{app:mixture} gives count formulas and an adverse response.

\subsection{An exact boundary under a mean-share constraint}\label{app:gain-share}
Unrestricted maximizing responses have $q=0$, and hence $w_0=0$.
To examine responses with a nonzero Full mean, define the deterministic
loss share
\[
 \eta(y)=\frac{b\|q\|^2}{J_0(y)},\qquad
 \mathcal C_\delta=\{y\ne0:\eta(y)\ge\delta\},\quad 0<\delta\le1.
\]
This share measures the fraction of Full loss carried by the group means.
It is a response restriction, not a population signal-to-noise ratio.
For each fixed query, let $H_\delta(t,W)$ be the supremum of normalized
mixture risk over $\mathcal C_\delta$.

\begin{proposition}[Mean-share risk and all equality cases]\label{prop:share-risk}
For $0\le t\le1$ and $0\ne W\succeq0$, put
\[
 c=\max_jW_{jj},\quad p_t=\alpha(t)c,\quad
 B_t(W)=\beta(t)\operatorname{diag}(W_{jj})-\tau(t)W,\quad
 \ell_t=\lambda_{\max}(B_t(W)).
\]
The sharp value at exact share $\eta(y)=\delta$ is
$(1-\delta)p_t+\delta\ell_t$, and
\begin{equation}\label{eq:gain-share-value}
 H_\delta(t,W)=\max\{\ell_t,(1-\delta)p_t+\delta\ell_t\}.
\end{equation}
At exact share $0<\delta<1$, all maximizers have $q$ in the complete
top eigenspace of $B_t(W)$, contrasts supported only in groups with
$W_{jj}=c$, and
$\sum_j E_j=(1-\delta)b\|q\|^2/\delta$.
At $\delta=1$ they are nonzero pure means in that eigenspace.
For the lower-bound-share class, $p_t>\ell_t$ forces share $\delta$;
$p_t<\ell_t$ forces share one; and $p_t=\ell_t$ allows every share
in $[\delta,1]$, with these same conditions on each nonzero sector.
For $W=0$, every admissible response has zero risk.
\end{proposition}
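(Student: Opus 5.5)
We reduce the constrained problem to a two-sector Rayleigh quotient built on the covariance decomposition \eqref{eq:mixture-coeff}. For $0\le t\le1$ and any response $y$ with contrasts $z$ and signed group means $q$,
\[
 \operatorname{tr}(W\operatorname{Cov}(w(t)))
 =\alpha(t)\sum_jW_{jj}E_j+b\,q^\top B_t(W)\,q ,
\]
where we use $\operatorname{tr}(W\operatorname{diag}(q_j^2))=q^\top\operatorname{diag}(W_{jj})q$ and $\operatorname{tr}(Wqq^\top)=q^\top Wq$. Write $E=\sum_jE_j$ and $J_0=E+b\lVert q\rVert^2$. Then the share is $\eta=b\lVert q\rVert^2/J_0$ and $1-\eta=E/J_0$. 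The two sectors are decoupled: the contrast vectors $(E_j)$ and the mean vector $q$ range independently. The only link between them is the share.

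\textbf{Exact-share value.} We bound the two terms separately. The contrast term satisfies $\alpha(t)\sum_jW_{jj}E_j\le p_tE$, with equality exactly when $E_j=0$ for every $j$ with $W_{jj}<c$. Here we need $\alpha(t)>0$, which holds on $[0,1]$ because $h_t>0$ and an intermediate count has positive probability, as in the proof of Proposition~\ref{prop:mixture}. The mean term satisfies $q^\top B_tq\le\ell_t\lVert q\rVert^2$, with equality exactly on the top eigenspace of the symmetric matrix $B_t(W)$. Dividing by $J_0$, the normalized risk is at most $(1-\eta)p_t+\eta\ell_t$.

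At a fixed share $0<\delta<1$, we attain this bound by choosing $q$ in the top eigenspace, contrasts supported in $I_{\max}$, and scaling so that $E=(1-\delta)b\lVert q\rVert^2/\delta$. Such contrasts exist because $n\ge2$ gives nonzero zero-sum vectors in each group. Equality in the bound forces both termwise equalities, provided the corresponding sector is nonzero. At share $\delta$, both $E>0$ and $q\ne0$, so both conditions are forced. This gives exactly the stated maximizer characterization. At $\delta=1$ we have $E=0$, so $z=0$, and the maximizers are the nonzero pure means in the top eigenspace.

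\textbf{Lower-bound class.} The exact-share value $(1-\eta)p_t+\eta\ell_t=p_t+\eta(\ell_t-p_t)$ is affine in $\eta$. Over $\eta\in[\delta,1]$ it is maximized at $\eta=\delta$ if $p_t>\ell_t$, at $\eta=1$ if $p_t<\ell_t$, and is constant if $p_t=\ell_t$. Taking the supremum over this interval gives \eqref{eq:gain-share-value}. The equality cases follow by combining this endpoint analysis with the exact-share characterization at the optimal share or shares; when $p_t=\ell_t$, the same sector conditions apply at every share in $[\delta,1]$.

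We also need the supremum to be attained. It is, because each value in the formula is attained by an explicit response. Finally, when $W=0$ the risk is identically zero.

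\textbf{Main obstacle.} The delicate step is the equality bookkeeping. Sector conditions apply only when the sector is nonzero, which is why the pure-mean case $\delta=1$ is separate. We must also state the tie $p_t=\ell_t$ carefully. Note that $\ell_t$ may be negative or zero; this affects nothing, since the argument uses only the eigen-bound, not its sign. No strict sector inequality from Theorem~\ref{thm:sharp-geometry} is needed here, because $t\in[0,1]$ is arbitrary.
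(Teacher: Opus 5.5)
Your proof is correct and is essentially the paper's argument: split the normalized risk into the contrast and mean Rayleigh bounds, attain them independently (using $n\ge2$ and a top eigenvector of $B_t(W)$), require equality only in sectors of positive energy, and read off the lower-bound-share cases from the slope of the affine function $p_t+\eta(\ell_t-p_t)$ on $[\delta,1]$. One small correction to a side remark: $\ell_t$ cannot be negative, because $b\,q^\top B_t(W)q$ is the risk of a pure-mean response, so $B_t(W)\succeq0$; as you say, though, the argument never uses the sign of $\ell_t$.
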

\begin{proof}
At fixed share $\eta$, the contrast and mean Rayleigh bounds give
$(1-\eta)p_t+\eta\ell_t$. Both bounds are attained independently:
$n\ge2$ supplies contrasts in every maximal group, and the mean
block has a top eigenvector. Equality in their sum requires equality
in each sector of positive energy. The function is affine in $\eta$.
Its maximum on $[\delta,1]$ and all its equality cases follow by its
slope. These sets are energy-constrained cones, generally not linear spaces.
\end{proof}

\begin{theorem}[Exact all-query improvement phase]\label{thm:share-phase}
Use the feature coefficients and protected interval $0<t\le T=T_s$
in \eqref{eq:mixture-step}. Set
\[
 K_\delta=(1-\delta)D-\delta B_1,\qquad
 L_\delta=(1-\delta)Q+\delta B_2\ge0.
\]
For a fixed protected weight,
\[
 H_\delta(t,W)<H_\delta(0,W)\quad\hbox{for every }0\ne W\succeq0
 \quad\Longleftrightarrow\quad K_\delta>L_\delta t.
\]
Some protected positive weight works exactly when $K_\delta>0$.
In that case one may take $t_\delta=\min\{T,K_\delta/(2L_\delta)\}$
if $L_\delta>0$, and $t_\delta=T$ otherwise.
Every protected weight works exactly when $K_\delta>L_\delta T$.
If $K_\delta\le0$, no positive scalar weight in this mixture family
improves every query, even outside the protected interval.
\end{theorem}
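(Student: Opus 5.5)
The plan is to reduce the all-query comparison to a scalar quadratic in $t$, using Proposition~\ref{prop:share-risk} on both sides. The sufficiency direction also needs a comparison of top eigenvalues; I expect that to be the real work.

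\emph{Step 1 (which branch of \eqref{eq:gain-share-value} is active).} Fix $0\ne W\succeq0$ and $c=\max_jW_{jj}$. On $0\le t\le T$ we have $\tau(t)>0$, so $B_t(W)\preceq\beta(t)\operatorname{diag}(W_{jj})\preceq\beta(t)cI$. This gives $\ell_t\le\beta(t)c$. The protected interval also gives $\alpha(t)>\beta(t)$, so $p_t>\ell_t$, and the same holds at $t=0$ by Theorem~\ref{thm:sharp-geometry}. Hence
\[
 H_\delta(t,W)=(1-\delta)\alpha(t)c+\delta\ell_t
\]
for every $0\le t\le T$.

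\emph{Step 2 (adverse query and necessity).} Take $u=e_1+e_2$, which needs $d\ge2$, and $W=uu^\top$. Then $c=1$, and $x=(e_1-e_2)/\sqrt2$ satisfies $Wx=0$ with $x^\top\operatorname{diag}(W_{jj})x=1$. So $x^\top B_tx=\beta(t)$ for every real $t$.

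For $0\le t\le T$, Step~1 gives $\ell_t\le\beta(t)$, so $\ell_t=\beta(t)$ exactly. With $\alpha(t)=A-Dt+Qt^2$ and $\beta(t)=B_0+B_1t+B_2t^2$,
\[
 H_\delta(t,W)-H_\delta(0,W)=(1-\delta)(-Dt+Qt^2)+\delta(B_1t+B_2t^2)=-t\,(K_\delta-L_\delta t).
\]
Improvement at this query therefore forces $K_\delta>L_\delta t$.

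For $t$ outside the protected interval, Step~1 is not available. I would still bound $H_\delta(t,W)$ from below by a test response at exact share $\delta$. It has mean direction $x$ and contrasts in group $1$ scaled as in Proposition~\ref{prop:share-risk}, so it attains $(1-\delta)\alpha(t)+\delta\beta(t)$. At $t=0$ the formula is exact. The same identity then gives $H_\delta(t,W)-H_\delta(0,W)\ge -t(K_\delta-L_\delta t)$, and this is $\ge0$ for every $t>0$ when $K_\delta\le0$, because $L_\delta\ge0$. This proves the final clause.

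\emph{Step 3 (sufficiency, the main obstacle).} Assume $K_\delta>L_\delta t$. By Step~1 it suffices to show
\[
 \ell_t-\ell_0\le\bigl(\beta(t)-\beta(0)\bigr)c
\]
for every $W$. Then the change in $H_\delta$ is at most $-t(K_\delta-L_\delta t)c<0$. If $x$ is a unit top eigenvector of $B_t$, then
\[
 \ell_t-\ell_0\le x^\top(B_t-B_0)x .
\]
I would write $x^\top B_tx$ as the normalized mean-sector variance $b^{-1}\operatorname{tr}(W\operatorname{Cov}(\operatorname{diag}(g_t(K))x))$, read off from \eqref{eq:mixture-coeff}, and expand $g_t=f+tv$. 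The $t$-linear cross term and the $t^2$ term are then covariances of increasing count scores under the actual law. I would try the conditional ordering from Appendix~\ref{app:geometry} to show the off-diagonal parts are non-positive in the direction that matters, so that the difference is dominated by the diagonal terms $\bigl(\beta(t)-\beta(0)\bigr)\sum_jW_{jj}x_j^2\le\bigl(\beta(t)-\beta(0)\bigr)c$. If $\beta(t)<\beta(0)$, I would instead use $\sum_jW_{jj}x_j^2\ge0$ together with the slack $\alpha(t)-\beta(t)\ge g/2$. I am least sure of the sign control here.

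\emph{Step 4 (choice of weight).} Once the equivalence holds, the remaining claims are about the linear condition $K_\delta>L_\delta t$ on $(0,T]$. It has a solution exactly when $K_\delta>0$. The value $t_\delta=\min\{T,K_\delta/(2L_\delta)\}$ satisfies it, or $t_\delta=T$ when $L_\delta=0$. It holds for all protected weights exactly when it holds at the endpoint, that is, when $K_\delta>L_\delta T$.
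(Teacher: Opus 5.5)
Your Steps 1, 2 and 4 and the final obstruction are correct and follow the paper's argument. Your adverse query $(e_1+e_2)(e_1+e_2)^\top$ works as well as the paper's flat $vv^\top$ with $v_j\in\{-1,1\}$.

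The gap is the sufficiency direction in Step 3. Write $y=\beta(t)-\beta(0)$. Along your top eigenvector, $x^\top(B_t(W)-B_0(W))x=y\sum_jW_{jj}x_j^2-(\tau(t)-\tau(0))\,x^\top Wx$. For $W=I$ your target bound $\ell_t-\ell_0\le yc$ is exactly $\tau(t)\ge\tau(0)$, so everything hinges on how $\tau$ moves relative to $\beta$.

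The conditional ordering of Appendix~\ref{app:geometry} cannot supply this. It gives $\tau(t)>0$ at each fixed $t$, not a comparison across $t$. It also does not sign $\tau_1,\tau_2$, because the Horvitz--Thompson correction score $v(k)=c_Hk-f(k)$ need not be increasing: at $n=d=s=2$, $r=1$ one has $v(0)=0>v(1)=-1/22$.

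When $y<0$, your target bound is neither needed nor evidently true. What you need is $\ell_t\le\ell_0$, strictly when $\delta=1$, since the $\alpha$ term then drops out. Your fallback does not give this. The slack $\alpha(t)-\beta(t)\ge g/2$ only fixes the active branch of \eqref{eq:gain-share-value}. The bound $\sum_jW_{jj}x_j^2\ge0$ controls only the diagonal term, not $-(\tau(t)-\tau(0))\,x^\top Wx$.

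The missing idea is an exact identity coming from the fixed total count. Put $\gamma(t)=\beta(t)-d\tau(t)$. Exchangeability gives $\gamma(t)=\operatorname{Var}(\sum_jg_t(K_j))/(db)$. Since $\sum_jK_j=s$, you have $\sum_jg_t(K_j)=(1-t)\sum_jf(K_j)+tc_Hs$, so $\gamma(t)=(1-t)^2\gamma(0)$.

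Set $P_W=\operatorname{diag}(W_{jj})-W/d$ and $R_W=W/d$. Both are positive semidefinite because $W\preceq d\operatorname{diag}(W_{jj})$. Then $B_t(W)=\beta(t)P_W+\gamma(t)R_W$ and $P_W+R_W\preceq cI$.

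\emph{Case $y\ge0$.} Here $B_t(W)-B_0(W)=yP_W+(\gamma(t)-\gamma(0))R_W\preceq ycI$. This is exactly your desired bound, and your Step 3 conclusion follows.

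\emph{Case $y<0$.} Here $B_t(W)\preceq\rho B_0(W)$ with $\rho=\max\{\beta(t)/\beta(0),(1-t)^2\}<1$; drop the second entry if $\gamma(0)=0$. Also $\ell_0\ge(\beta(0)-\tau(0))c>0$. Hence $\ell_t<\ell_0$, and $H_\delta$ strictly decreases for every nonzero $W$, including at $\delta=1$. The scalar condition holds automatically here, since $t(K_\delta-L_\delta t)=(1-\delta)(A-\alpha(t))-\delta y>0$.

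The equivalence closes through this case split on the sign of $\beta(t)-\beta(0)$, not through a single comparison valid for all $W$.
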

\begin{proof}
On $[0,T]$, $\alpha(t)>\beta(t)$ and $\tau(t)>0$, so
$p_t>\ell_t$ and \eqref{eq:gain-share-value} uses share $\delta$.
Put $\gamma(t)=\beta(t)-d\tau(t)$. Exchangeability gives
\[
 \gamma(t)=\frac{\operatorname{Var}(\sum_jg_t(K_j))}{db}
           =(1-t)^2\gamma(0)\ge0,
\]
since $\sum_jK_j=s$ is fixed. Define
$P_W=\operatorname{diag}(W_{jj})-W/d$ and $R_W=W/d$.
Both are PSD: the Gram representation of $W$ and Cauchy--Schwarz give
$W\preceq d\operatorname{diag}(W_{jj})$.
Thus $B_t(W)=\beta(t)P_W+\gamma(t)R_W$ and $P_W+R_W\preceq cI$.

Write $x=A-\alpha(t)>0$ and $y=\beta(t)-B_0$.
If $y\ge0$, the preceding identity gives
$\ell_t-\ell_0\le yc$ and hence
\[
 H_\delta(0,W)-H_\delta(t,W)
 \ge c\{(1-\delta)x-\delta y\}=ct(K_\delta-L_\delta t).
\]
A flat rank-one query $W=vv^\top$, $v_j\in\{-1,1\}$, attains this
bound: its mean top eigenspace is $v^\perp$ and $\ell_t=\beta(t)$.
This proves necessity and sufficiency when $y\ge0$.

If $y<0$, set $\rho=\max\{\beta(t)/B_0,(1-t)^2\}<1$ when
$\gamma(0)>0$, and $\rho=\beta(t)/B_0<1$ otherwise.
Here $B_0>0$ because unequal counts have positive probability and
$f$ is strictly increasing. We have $B_t(W)\preceq\rho B_0(W)$.
Also $\ell_0\ge(B_0-\tau(0))c>0$, since
$B_0-\tau(0)=((d-1)B_0+\gamma(0))/d>0$.
Consequently $\ell_t<\ell_0$ for every nonzero query. Both sectors
strictly improve, including the pure-mean class $\delta=1$.
The scalar condition holds as well because $(1-\delta)x-\delta y>0$.

The weight conclusions follow from $L_\delta\ge0$.
For the final obstruction use the flat query and one baseline attaining
response of share $\delta$, with mean orthogonal to $v$.
For every real $t>0$ its risk change equals
$-K_\delta t+L_\delta t^2\ge0$. The endpoint supremum is at least
this response's risk. Thus it cannot fall below the baseline supremum.
\end{proof}

A useful uniform window follows directly. Put
$L_+=\max\{0,B_1+B_2T\}$ and $\delta_{\rm safe}=D/(D+2L_+)>0$.
Since $A-\alpha(t)\ge Dt/2$ and
$B_t(W)-B_0(W)\preceq tL_+cI$, every
$0<\delta\le\delta_{\rm safe}/2$ and $0<t\le T$ obeys
\begin{equation}\label{eq:gain-share-window}
 H_\delta(0,W)-H_\delta(t,W)\ge Dtc/4.
\end{equation}
For finitely many budgets, minima give a common positive share window
and weight. This does not imply monotonicity with budget.

For the exact example $n=d=s=2$, $r=1$, $\lambda_0=12/5$,
Appendix~\ref{app:mixture} gives
\[
 \alpha(t)=\frac{(11-t)^2}{1331},\quad
 \beta(t)=\frac{(11+4t)^2}{2178},\quad T=\frac{847}{3116}.
\]
Some positive weight improves every query exactly when $\delta<9/31$.
At $t=1/4$, the exact threshold is $\delta<783/2807$.
At share $1/2$, the flat-query sharp risk instead increases by
$1241/383328$. These follow by substitution in
$-K_\delta t+L_\delta t^2$. They delimit this scalar mixture family;
they do not classify other estimators or perturbed mean-share classes.

\section{Stability under full recalibration}\label{sec:transfer}
We now return to the unrestricted response class $y\ne0$. The mean-share restriction in Section~\ref{sec:mixture} is not imposed here. We first treat arbitrary full-column-rank designs, then return to the balanced designs of Theorem~\ref{thm:sharp-geometry}. Their full equality space has a use when
the exact row structure is broken.
Keep the raw response coordinates, target $A_0$, and physical query $W$
fixed, and replace $X$ by $X+H$. At each design recompute the full fit,
calibrated penalty, sampling law, and selected fit.
The question is whether searching only the old maximizing responses still
finds nearly the worst response for the perturbed design. The answer is
yes to second order, provided we search the whole space and use the new
Full loss.

Use the response matrix and normalized operator from
\eqref{eq:response-operator}--\eqref{eq:normalized-operator} at each design.
All operator norms below are spectral norms.
We first bound the recalibrated operator, then study response-space
restriction, and finally compare the two estimator risks.

\subsection{General-design sensitivity}
\begin{theorem}[Sensitivity of the fully recalibrated procedure]\label{thm:general-sensitivity}
Fix a full-column-rank $X\in\mathbb R^{m\times d}$, an SPD target $A_0$,
a fixed PSD physical query $W$, and $d\le s<m$.
Put $\epsilon=\|HA_0^{-1/2}\|_F$ and recompute the Full fit, calibrated
penalty, determinant law and selected fit at $X+H$.
The explicit constants in \eqref{eq:radius-constants}--\eqref{eq:explicit-eta}
give, for $\epsilon\le r_0$,
\[
 \|M_s(X+H,W)-M_s(X,W)\|\le\eta_s(\epsilon)\le\mathcal M_s\epsilon,
 \qquad |h_s(X+H,W)-h_s(X,W)|\le\eta_s(\epsilon).
\]
For every fixed nonzero raw response,
$e^{-2\epsilon}\le J_0(X+H,y)/J_0(X,y)\le e^{2\epsilon}$.
These constants require no enumeration of row subsets.
\end{theorem}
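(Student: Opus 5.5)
The plan is to prove the three conclusions in the order in which they depend on each other: first the Full-loss ratio, then stability of the calibrated penalty, then a bound on the normalized operator that needs no subset enumeration.

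\textbf{Full loss.} Whiten by the target. With $\widetilde X=XA_0^{-1/2}$ and $\widetilde H=HA_0^{-1/2}$, we have $K_X=(I+\widetilde X\widetilde X^\top)^{-1}$, and
\[
 J_0(X,y)=\min_v\bigl\{\lVert y-\widetilde Xv\rVert^2+\lVert v\rVert^2\bigr\}.
\]
Fix the minimizer $v$ for one design and insert it into the objective for the other design. The objective changes by at most a factor controlled by $\lVert \widetilde Hv\rVert\le\epsilon\lVert v\rVert$. Using $2ab\le a^2+b^2$, this gives a multiplicative distortion of $(1+\epsilon)^2\le e^{2\epsilon}$. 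Exchanging the two designs gives the lower bound. Equivalently,
\[
 e^{-2\epsilon}K_X\preceq K_{X+H}\preceq e^{2\epsilon}K_X,
\]
and hence $\lVert K_{X+H}^{\pm1/2}K_X^{\mp1/2}\rVert\le e^{\epsilon}$.

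\textbf{Calibrated penalty.} Next I bound the change in $R_s$. In the whitened spectral coordinates of the proof of Theorem~\ref{thm:calibration}, $R_s$ is determined by the inverse of the moment map of Lemma~\ref{lem:moment-map}. That map has the positive definite indicator covariance as its Hessian. Because $s\ge d$ and the coefficients $c_k$ with $k\le d$ are all positive, this covariance has an explicit lower eigenvalue bound in terms of the ratio bounds in \eqref{eq:penalty-sandwich}. The implicit-function derivative of $C\mapsto R_s$ is then bounded by that Hessian gap, together with the derivative of $D=G^{1/2}(G+A_0)^{-1}G^{1/2}$, which is of order $\epsilon$.

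I will also need $R_s$ to stay in the sandwich. This holds at every design, because \eqref{eq:penalty-sandwich} is design-uniform relative to $A_0$. Consequently $r_{\min,s}$ and $\Lambda_s$ stay bounded along the whole segment $X+uH$, $u\in[0,1]$. Integrating the derivative along this segment gives $\lVert R_s(X+H)-R_s(X)\rVert\le c_R\epsilon$, provided $\epsilon\le r_0$.

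\textbf{Response operator and normalized operator.} Write
\[
 C_s=\mathbb E_pF_S^\top WF_S-F_0^\top WF_0,
\]
which holds because calibration gives $\mathbb E_pF_S=F_0$. Every $F_S$ satisfies $\lVert F_S\rVert\le 1/(2\sqrt{\lambda_{\min}(R)})$, and its derivative in $(X,R)$ is bounded by the same resolvent norms. Every determinant weight ratio moves by a factor at most $\exp\{\operatorname{tr}[(G_S+R)^{-1}\Delta]\}$, which is uniform in $S$. Hence the law's total variation moves by $O(\epsilon)$ with no enumeration, in the same way that Proposition~\ref{prop:common-energy} used only uniform weight bounds.

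Combining these three perturbations with the first step, and conjugating by $K^{-1/2}$, gives $\lVert M_s(X+H)-M_s(X)\rVert\le\eta_s(\epsilon)$. Here $\eta_s(\epsilon)$ is the explicit sum of terms of the form (exponential factor $-1$) plus the Lipschitz pieces, so it is bounded by $\mathcal M_s\epsilon$ for $\epsilon\le r_0$. Weyl's inequality applied to $\lambda_{\max}$ in \eqref{eq:normalized-operator} then gives the bound on the sharp risk.

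\textbf{Main obstacle.} I expect the hard step to be the penalty perturbation. The Hessian lower bound must be made explicit and must hold uniformly along the segment. Moreover, the calibrated eigenbasis need not vary smoothly when target eigenvalues coincide. I would first try to avoid eigenvectors altogether. The idea is to differentiate the matrix identity $\Phi_s(C)=D$ as a map on symmetric matrices and bound the inverse of its Fréchet derivative by the minimum over pairs of divided differences $(\rho_i-\rho_j)/(q_i-q_j)$. The exact difference identity in the proof of Theorem~\ref{thm:calibration} shows these quotients are positive, and the $c_1$ term bounds them below.
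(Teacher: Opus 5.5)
Your skeleton matches the paper's proof. You invert the fixed-budget moment map using a Hessian floor that is uniform over the box given by the penalty sandwich. You then integrate the resulting penalty Lipschitz bound along $X+uH$. The law is controlled through a log-determinant derivative that is uniform in $S$, which gives a total-variation bound with no enumeration. Finally you bound the fit atoms and conclude with the Rayleigh principle.

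Several of your sub-steps are sound alternatives to the paper's.
\begin{itemize}
\item The variational loss argument is correct and more elementary than the paper's derivative of $\log(y^\top Ky)$. It even gives the factor $(1+\epsilon)^2$ with no radius.
\item For the conjugation, your bound $\|K_{X+H}^{\pm1/2}K_X^{\mp1/2}\|\le e^{\epsilon}$ is equivalent to $e^{-2\epsilon}T^2\preceq (T')^2\preceq e^{2\epsilon}T^2$, where $T=K_X^{-1/2}$ and $T'=K_{X+H}^{-1/2}$. Operator monotonicity of the square root then gives the additive bound $\|T'-T\|\le(e^{\epsilon}-1)\|T\|$, which is what you actually need. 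The paper instead gets $\|\tfrac{d}{dx}T\|\le v\epsilon$ from the Sylvester integral.
\item Using the exact difference identity to get $(\rho_i-\rho_j)/(q_i-q_j)\ge c_1/P_s(q)$ is a valid explicit substitute for the paper's swap-integration bound in the off-diagonal directions.
\item The bound $\|F_S\|\le(2\sqrt{\lambda_{\min}(R)})^{-1}$ is true, since $A^{-1}G_SA^{-1}\preceq A^{-1}-\lambda_{\min}(R)A^{-2}$ for $A=G_S+R$.
\end{itemize}

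As a proof of the statement as written, however, there is a gap. The theorem asserts that the specific constants of \eqref{eq:radius-constants}--\eqref{eq:explicit-eta} work. Your plan neither produces them nor supplies its own central quantitative input.

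\textbf{The curvature floor.} You assert an explicit floor for $\operatorname{Cov}(\mathbf 1_J)$ but give no mechanism for it. Positivity of the $c_k$ yields only positive definiteness. The divided differences do not reach the eigenvalue directions, for example the all-ones direction, where the relevant quantity is $\operatorname{Var}|J|$. Restricting $\min_c\mathbb E(\xi^\top\mathbf 1_J-c)^2$ to $J=\emptyset$ and the singletons does give an explicit floor. But that floor carries the factor $\pi(\emptyset)=c_0/P_s(q)$, which can be exponentially small in $d$, and it is not $\kappa_s$.

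\textbf{The paper's device} is a marked-slot representation. Choose $s$ of $m$ slots with weights $1+q_j$ on the feature slots and $1$ elsewhere. Then mark each chosen feature slot independently with probability $q_j/(1+q_j)$. The marked set has law proportional to $c_{|J|}q_J$, so conditional independence gives $\operatorname{Cov}(\mathbf 1_J)\succeq\operatorname{diag}\{\rho_j/(1+q_j)\}$. The slot-inclusion bound $\mathbb P(j\text{ chosen})\ge(s/m)(1+q_j)/(1+q_+)$ then gives $\kappa_s=b_sq_-/\{(1+q_-)(1+q_+)\}$. The constants $\mathcal L_s,L_X,C_D,C_F,L_p$ are all built from this.

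\textbf{The remaining bookkeeping} also has to change to land on $\eta_s$.
\begin{itemize}
\item The paper works in the fixed target coordinates $Z=XA_0^{-1/2}$. Your Gram-whitened coordinates move with $X$ and add $G^{\pm1/2}$ derivative terms.
\item The paper keeps $T$ inside the centered PSD atom $(F_ST)^\top V(F_ST)$, with $F_S$ centered at the Full map. It then uses $\|F_ST\|\le B_Ft$, $\|\tfrac{d}{dx}(F_ST)\|\le C_F\epsilon$ and $\operatorname{TV}\le\tanh(L_p\epsilon/2)$.
\end{itemize}
Your uncentered split $\mathbb E_pF_S^\top WF_S-F_0^\top WF_0$, with separate conjugation, gives a valid bound of the same order. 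Its constants are different, so you would prove the qualitative statement rather than the one stated.
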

\begin{proof}
Put $Z=XA_0^{-1/2}$, $V=A_0^{-1/2}WA_0^{-1/2}$ and
$P=A_0^{-1/2}R_sA_0^{-1/2}$. Along the feature segment, use the explicit
Gram box $[\ell,u]$ and constants of Appendix~\ref{app:transfer}.
Calibration has marked-set activation means
$\rho_j=\lambda_j/(1+\lambda_j)$ and odds $q_j=\lambda_j/p_j$.
Its log-coordinate Hessian is $\operatorname{Cov}(\mathbf1_J)$.
Select $s$ slots with weights $1+q_j$ on feature slots and one on the
others, then independently mark a selected feature slot with probability
$q_j/(1+q_j)$. Conditional variance gives
\[
 \operatorname{Cov}(\mathbf1_J)\succeq
 \operatorname{diag}\{\rho_j/(1+q_j)\}\succeq\kappa_sI.
\]
The explicit $\kappa_s>0$ in \eqref{eq:calibration-modulus} follows
from the penalty sandwich and slot inclusion bounds.
Integrating this Hessian bound along a coordinate swap bounds the
spectral divided differences as well. Thus it includes all symmetric
matrix directions and repeated eigenvalues. Inverting the moment
map gives $\|\dot P\|_F\le L_X\epsilon$.

For $F_S=(Z^\top D_SZ+P)^{-1}Z^\top D_S-(I+Z^\top Z)^{-1}Z^\top$
and $T=(I+ZZ^\top)^{1/2}$, ordered differentiation of the ridge fits
and the equation $T\dot T+\dot TT=EZ^\top+ZE^\top$ give
\[
 \|F_ST\|\le B_Ft,\qquad
 \|(F_ST)^{\boldsymbol\cdot}\|\le C_F\epsilon.
\]
The log derivative of each unnormalized determinant weight is at most
$L_p\epsilon$. The normalized endpoint laws therefore have total
variation at most $\tanh(L_p\epsilon/2)$, as proved in
\eqref{eq:law-tv}. Since the risk atom is PSD and at most
$\|V\|B_F^2t^2I$, integrating its derivative and changing its law yield
\[
 \|M_s(X+H)-M_s(X)\|
 \le\|V\|\{2B_FtC_F\epsilon+
           B_F^2t^2\tanh(L_p\epsilon/2)\}=\eta_s(\epsilon).
\]
The Rayleigh principle gives the sharp-value bound.
Finally $K=(I+ZZ^\top)^{-1}$ obeys
$|\partial_x\log(y^\top Ky)|\le2\epsilon$ along the segment,
using $\|K^{1/2}Z\|\le1$ and $\|K^{1/2}\|\le1$.
Integration gives the loss ratio. Appendix~\ref{app:transfer} supplies
the stated constants and each derivative and likelihood bound.
\end{proof}

\subsection{Restriction to the old complete maximizing space}
\begin{theorem}[Recalibrated transfer]\label{thm:transfer}
Let $X_0$ be a balanced design as in Theorem~\ref{thm:sharp-geometry},
and fix nonzero $W\succeq0$. Let $\mathcal E$ be its complete maximizing
raw response space. For each $d\le s<nd$, there are explicit positive
constants $r,L_s,v,\gamma_s$ such that, for
$\epsilon=\lVert HA_0^{-1/2}\rVert_F\le r$, all recalibrated objects
exist and
\begin{equation}\label{eq:operator-lipschitz}
 \lVert M_s(X_0+H)-M_s(X_0)\rVert\le\eta_s(\epsilon)
 \le L_s\epsilon.
\end{equation}
Here $\gamma_s$ is the gap below the full top eigenspace at $X_0$.
If $\eta_s<\gamma_s/2$ and $\omega=v\epsilon<1$, put
\[
 \theta_s=\frac{\eta_s}{\gamma_s-\eta_s}+\frac{\omega}{1-\omega}.
\]
Then $|h_s(X_0+H)-h_s(X_0)|\le\eta_s$ and
\begin{align}\label{eq:raw-regret}
 0\le{}&h_s(X_0+H)
 -\sup_{0\ne y\in\mathcal E}
 \frac{y^\top C_s(X_0+H)y}{y^\top K_{X_0+H}y}\nonumber\\
 \le{}&(h_s(X_0)+\eta_s)\min\{1,\theta_s^2\}.
\end{align}
One positive radius makes these conditions hold at every stated budget.
\end{theorem}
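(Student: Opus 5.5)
The plan has three parts: the operator bound, the gap at the base, and a Davis--Kahan type estimate for the restricted Rayleigh quotient. The one new ingredient is a change of inner product, because $\mathcal E$ is a raw response space while $M_s$ is expressed in $K$-normalized coordinates.

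\emph{Operator bound.} Inequality \eqref{eq:operator-lipschitz} is Theorem~\ref{thm:general-sensitivity} applied at $X=X_0$, which is a full-rank balanced design. I take $r\le r_0$ and $L_s=\mathcal M_s$. Because the constants of that theorem are explicit, taking minima over the finitely many budgets $d\le s<nd$ gives one radius valid at every budget. The Rayleigh principle, or Weyl's inequality, then gives $|h_s(X_0+H)-h_s(X_0)|\le\eta_s$.

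\emph{Gap at the base.} By Theorem~\ref{thm:sharp-geometry}, the top eigenspace of $M_s(X_0)$ is exactly $K_{X_0}^{1/2}\mathcal E$. The deficit \eqref{eq:sharp-deficit} is positive definite on a complement of this space, so the next eigenvalue is strictly smaller. I define $\gamma_s>0$ as this spectral gap; it is computable from $\alpha_s$, $\beta_s$, $\tau_s$ and the diagonal of $W$.

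\emph{Regret.} Write $M'=M_s(X_0+H)$ and $K'=K_{X_0+H}$. The restricted supremum over $\mathcal E$ equals the top eigenvalue of $M'$ compressed to the subspace $\mathcal F'=K'^{1/2}\mathcal E$. Let $\Pi'$ be the spectral projection of $M'$ for eigenvalues above $h_s(X_0)-\gamma_s/2$. Let $\mathcal F=K_{X_0}^{1/2}\mathcal E$ be the old top space.

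I bound the distance from $\mathcal F'$ to the range of $\Pi'$ in two steps.

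1. Davis--Kahan (sin~$\Theta$) with gap $\gamma_s-\eta_s$ gives $\|(I-\Pi')P_{\mathcal F}\|\le\eta_s/(\gamma_s-\eta_s)$.
2. The map $K'^{1/2}K_{X_0}^{-1/2}$ is within $\omega=v\epsilon$ of the identity, with $v$ read off from the loss-ratio estimate of Theorem~\ref{thm:general-sensitivity} (log derivative at most $2\epsilon$). Hence $\mathcal F$ and $\mathcal F'$ differ in gap by at most $\omega/(1-\omega)$.

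Adding these gives $\sin\Theta\le\theta_s$ between $\mathcal F'$ and the range of $\Pi'$.

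Now pick a unit top eigenvector $u$ of $M'$. Since $\eta_s<\gamma_s/2$, $u$ lies in the range of $\Pi'$. Take a vector $f\in\mathcal F'$ aligned with $u$ up to angle $\theta_s$. Write $f=\cos\phi\,u+\sin\phi\,u^\perp$. Then
\[
f^\top M'f\ \ge\ \cos^2\phi\,h'+\sin^2\phi\,\lambda_{\min}(M')\ \ge\ h'(1-\theta_s^2),
\]
where $h'=h_s(X_0+H)$. Here the cross term vanishes because $u$ is an eigenvector, and $\lambda_{\min}(M')\ge0$ because $M'$ is PSD. This gives a regret of at most $h'\theta_s^2\le(h_s(X_0)+\eta_s)\theta_s^2$. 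The bound by $h'$ alone, and the lower bound $0$, are trivial because $\mathcal E$ is a subset of all responses.

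\emph{Main obstacle.} The hard step is the change of inner product. One must make $v$ explicit so that $\|K'^{1/2}K_{X_0}^{-1/2}-I\|\le\omega$, and then control how this non-orthogonal map tilts the subspace by $\omega/(1-\omega)$. I would first try the bound
\[
\|K'^{1/2}K_{X_0}^{-1/2}-I\|\le e^{\epsilon}-1,
\]
derived from the Loewner sandwich $e^{-2\epsilon}K_{X_0}\preceq K'\preceq e^{2\epsilon}K_{X_0}$ and operator monotonicity of the square root. If the spectral-norm control from this sandwich is insufficient, I would instead integrate $\dot K$ along the segment, using $\|K^{1/2}Z\|\le1$.
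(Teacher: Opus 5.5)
Your outline matches the paper's proof. You take $L_s=\mathcal M_s$ and $r\le r_0$ from Theorem~\ref{thm:general-sensitivity}, and the base gap from the strict deficit. You bound the part of a new top eigenvector outside $\mathcal E$ by $\eta_s/(\gamma_s-\eta_s)$; the paper gets this by projecting the eigen-equation onto $\mathcal E^\perp$, which is your Davis--Kahan step. The remaining ingredients are the tilt $\omega/(1-\omega)$ from the moving normalization and the Rayleigh estimate with vanishing cross term.

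The step that does not go through as written is the one you flagged. The sandwich $e^{-2\epsilon}K_{X_0}\preceq K'\preceq e^{2\epsilon}K_{X_0}$ and operator monotonicity control only symmetric objects. Write $N=K'^{1/2}K_{X_0}^{-1/2}$. You control $N^\top N=K_{X_0}^{-1/2}K'K_{X_0}^{-1/2}$, and the congruence $\widetilde D=K_{X_0}^{-1/4}(K'^{1/2}-K_{X_0}^{1/2})K_{X_0}^{-1/4}$, which satisfies $\|\widetilde D\|\le e^{\epsilon}-1$. But $N-I=K_{X_0}^{1/4}\widetilde D K_{X_0}^{-1/4}$ is a similarity of $\widetilde D$, not a congruence. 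Singular values near one do not make a nonsymmetric matrix close to $I$. So the full-space inequality $\|N-I\|\le e^{\epsilon}-1$ is not established. On the whole space this route only gives $\|N-I\|\le(1+n/\lambda_0)^{1/4}(e^{\epsilon}-1)$.

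The repair is to use $N$ only on $\mathcal F$, which is all your tilt argument needs. Since $\mathcal E\subseteq\ker X_0^\top$, $K_{X_0}$ is the identity on $\mathcal E$, so $\mathcal F=\mathcal E$; also $K_{X_0}\preceq I$. For $y\in\mathcal E$ this gives $(N-I)y=K_{X_0}^{1/4}\widetilde D y$, hence $\|(N-I)y\|\le(e^{\epsilon}-1)\|y\|$. Taking $v=(e^{r}-1)/r$ then yields $\omega=v\epsilon$ for $\epsilon\le r$, and the rest of your argument is complete.

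The paper instead writes $K^{1/2}=T^{-1}$ with $T=(I+ZZ^\top)^{1/2}\succeq I$. Then $\|(T')^{-1}-T^{-1}\|\le\|T'-T\|$, and it bounds $\|\dot T\|\le v\epsilon$ with $v=z+r_0$ from the Sylvester equation $T\dot T+\dot TT=EZ^\top+ZE^\top$. Your repaired constant uses only the loss comparison already in Theorem~\ref{thm:general-sensitivity}, and it does not grow with $z=\sqrt{n/\lambda_0}$. Your fallback of integrating $\dot K$ would bound $K'-K$ and would still need a square-root derivative step like the paper's.

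Two smaller points. The common radius must also enforce $L_s\epsilon<\gamma_s/2$ and $v\epsilon<1$ at every budget; the paper uses $r_*=\min_s\{r_0,1/(2v),\gamma_s/(4\mathcal M_s)\}$. Also, $\gamma_s$ depends on all of $W$ through $\lambda_{\max}(\beta_s\operatorname{diag}(W_{jj})-\tau_sW)$, not only on its diagonal.
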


The normalized coordinate is $u=K_X^{1/2}y$, so keeping the raw space
$\mathcal E$ fixed means searching $K_X^{1/2}\mathcal E$ at the new design.
The first term in $\theta_s$ controls movement of the top eigenspace;
the second controls this change of normalization.
The right side of \eqref{eq:raw-regret} is $O(\epsilon^2)$ at a fixed
base. The optimum itself can move by $O(\epsilon)$. The response must
be reoptimized within the entire old space, including all ties, using
the perturbed denominator. A selected old maximizing vector need not
have quadratic regret.

\subsection{The balanced gap and the unchanged raw space}
At the balanced base, contrasts in group $j$ are normalized eigenvectors
with eigenvalue $\alpha_sW_{jj}$, while the normalized mean block is
\[
 B_{\mathrm{mean},s}=\beta_s\operatorname{diag}(W_{jj})-\tau_sW.
\]
To check this normalization, use the orthonormal signed group-mean
coordinates $\sqrt n\,q$. The loss matrix equals $b/n$ on that sector,
so division by the loss changes its risk block into the matrix above.
On all contrast directions, $K_{X_0}=I$. Thus the full top eigenspace
is precisely the raw space $\mathcal E$ in \eqref{eq:old-space}.
Its gap is explicitly
\begin{equation}\label{eq:base-gap}
 \gamma_s=\alpha_sc-
 \max\left\{\alpha_s\max_{j\notin I_{\max}}W_{jj},
             \lambda_{\max}(B_{\mathrm{mean},s})\right\}>0,
\end{equation}
where an absent contrast term is omitted. Positivity follows from
$B_{\mathrm{mean},s}\preceq\beta_scI$ and $\alpha_s>\beta_s$.
In particular a lower bound is the minimum of
$c(\alpha_s-\beta_s)$ and any present positive contrast gap.

Let $M=M_s(X_0)$, $M'=M_s(X_0+H)$, $\eta=\eta_s$, and
$h'=\lambda_{\max}(M')$. The variational characterization gives
$|h'-h_s(X_0)|\le\eta$. If $u'$ is a unit top eigenvector of $M'$
and $P$ is the orthogonal projector onto $\mathcal E$, projecting its
eigenvalue equation onto $\mathcal E^\perp$ gives
\[
 \lVert(I-P)u'\rVert\le\frac{\eta}{\gamma_s-\eta}.
\]
Indeed the spectrum of $M$ on that complement is at most
$h_s(X_0)-\gamma_s$, while $h'\ge h_s(X_0)-\eta$.
This estimate applies to every new top eigenvector, regardless of ties.

The normalized response coordinate is $u=K_X^{1/2}y=T^{-1}y$.
Thus the unchanged raw space $\mathcal E$ has normalized image
$(T')^{-1}\mathcal E$ at the endpoint. The inverse identity and $T,T'\succeq I$
give
\[
 \lVert(T')^{-1}-T^{-1}\rVert
 \le\lVert T'-T\rVert\le v\epsilon=\omega.
\]
Since $T^{-1}=I$ on $\mathcal E$, the map
$(T')^{-1}|_{\mathcal E}$ differs from the identity by at most $\omega$.
Its least singular value is at least $1-\omega$. Resolving its parallel
and perpendicular components shows that the projector onto
$(T')^{-1}\mathcal E$ differs from $P$ by at most
$\omega/(1-\omega)$ (the two spaces have the same dimension).
Thus a unit new maximizer has distance at most
\[
 \theta_s=\frac{\eta}{\gamma_s-\eta}+\frac{\omega}{1-\omega}
\]
from that endpoint normalized space.

If its projection is nonzero, normalize this projection to obtain $v'$.
For a top eigenvector $u'$ of a symmetric positive semidefinite $M'$,
\[
 h'-(v')^\top M'v'
 \le(h'-\lambda_{\min}(M'))
          [1-|\langle u',v'\rangle|^2]
 \le h'\theta_s^2.
\]
If the projection is zero use the trivial upper bound $h'$. Optimizing
over the endpoint normalized image is exactly optimizing over unchanged
raw $y\in\mathcal E$ with the endpoint loss denominator.
Since $h'\le h_s(X_0)+\eta$, this proves \eqref{eq:raw-regret}.

At the balanced base $\sigma=z=\sqrt{n/\lambda_0}$, so the constants
above are explicit. Over the finite grid $d\le s<nd$, the radius
\[
 r_*=\min_{d\le s<nd}
 \left\{r_0,\frac1{2v},\frac{\gamma_s}{4\mathcal M_s}\right\}>0
\]
gives $\omega\le1/2$ and $\eta_s\le\gamma_s/4$ at every size.
It proves the asserted common local order. No uniformity is asserted
as dimension grows, the target penalty vanishes, or a query gap closes.

\subsection{A larger space with a query-uniform radius}
The old space can have a small external gap when two query diagonals are
nearly tied. Retaining more contrast groups removes this dependence.
For $I\subseteq\{1,\ldots,d\}$ containing every maximal query group,
let $\mathcal E_I$ be the unchanged raw contrasts supported on $I$.
It has dimension $(n-1)|I|$. The base top-to-complement gap is at least
\[
 \gamma_I=\min\{\alpha_s(c-\max_{j\notin I}W_{jj}),
                       (\alpha_s-\beta_s)c\}>0,
\]
with the first term omitted if $I$ contains every group.
This follows from the exact contrast eigenvalues and
$\beta_s\operatorname{diag}(W_{jj})-\tau_sW\preceq\beta_scI$.
For $\mathcal E_{\rm all}=\ker X_0^\top$ it gives
$\gamma_{\rm all}=(\alpha_s-\beta_s)c$.
For $I_\vartheta=\{j:W_{jj}\ge(1-\vartheta)c\}$, $0<\vartheta\le1$,
it gives $\gamma_I\ge c\min\{\alpha_s\vartheta,\alpha_s-\beta_s\}$.

\begin{theorem}[Enlarged contrast-space transfer]\label{thm:enlarged}
Keep the balanced base, target and fixed physical query, and recalibrate
exactly at $X_1=X_0+H$. Write $K_1=K_{X_1}$,
$M_1=K_1^{-1/2}C_s(X_1,W)K_1^{-1/2}$ and $h_1=\lambda_{\max}(M_1)$.
Use the finite surrogate $\widetilde M$ and error $e=e_{\rm end}$ from
\eqref{eq:common-surrogate}--\eqref{eq:endpoint-error-bound}, or any valid
symmetric surrogate with $\|M_1-\widetilde M\|\le e$.
Let $U$ and $Q$ be orthonormal bases of $K_1^{1/2}\mathcal E_I$ and its
orthogonal complement. Define
\[
 a=\lambda_{\max}(U^\top\widetilde M U),\quad
 d_0=\lambda_{\max}(Q^\top\widetilde M Q),\quad
 b=\|Q^\top\widetilde M U\|,\quad g=a-d_0-2e,\quad B=b+e.
\]
If $h_{1,I}$ is the supremum over nonzero raw $y\in\mathcal E_I$ with
the endpoint loss denominator, then, when $g>0$,
\begin{equation}\label{eq:enlarged-regret}
 0\le h_1-h_{1,I}\le
 \frac{2B^2}{\sqrt{g^2+4B^2}+g}\le\frac{B^2}{g}.
\end{equation}
At every endpoint an alternative bound is
$h_1-h_{1,I}\le\lambda_{\max}(\widetilde M)-a+2e$.
Every response of true sharp deficit $h_1-r_1(y)\le\rho$, where
$r_1(y)=y^\top C_s(X_1,W)y/(y^\top K_1y)$, obeys, if $g>0$,
\begin{equation}\label{eq:enlarged-leakage}
 \min_{v\in\mathcal E_I}
 \sqrt{\frac{(y-v)^\top K_1(y-v)}{y^\top K_1y}}
 \le\min\left\{1,\frac{B+\sqrt{B^2+g\rho}}{g}\right\}.
\end{equation}
For an exact maximizer the sharper bound is $B/\sqrt{g^2+B^2}$.
For the full contrast space, one explicit positive feature radius works
for all nonzero PSD queries after normalization $\max_jW_{jj}=1$, and
all budgets $d\le s<m$. On that ball the regret is $O(\epsilon^2)$ and
the leakage is $O(\epsilon+\sqrt\rho)$, uniformly over those queries and
budgets. The same holds for each fixed threshold $\vartheta>0$ with
its query-dependent space $\mathcal E_{I_\vartheta}$.
\end{theorem}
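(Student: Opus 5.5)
The plan is a two-block Rayleigh comparison in the normalized coordinates $u=K_1^{1/2}y$, where the $K_1$-distance in \eqref{eq:enlarged-leakage} is just the sine of the angle between $u$ and $K_1^{1/2}\mathcal E_I$. Write a unit $u$ as $u=\cos\varphi\,Ux+\sin\varphi\,Qz$ with unit $x,z$. Then $h_{1,I}=\lambda_{\max}(U^\top M_1U)$, because restricting raw $y$ to $\mathcal E_I$ with the endpoint denominator is exactly restricting $u$ to the range of $U$. I will first transfer the surrogate data to the true operator using $\|M_1-\widetilde M\|\le e$:
\[
 h_{1,I}\ge a-e,\qquad \lambda_{\max}(Q^\top M_1Q)\le d_0+e=:d_1,\qquad \|Q^\top M_1U\|\le b+e=B .
\]
Hence $h_{1,I}-d_1\ge g$. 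Expanding the quadratic form then gives
\[
 u^\top M_1u\le \cos^2\varphi\, h_{1,I}+\sin^2\varphi\, d_1+2B\,|\sin\varphi\cos\varphi| .
\]
This is the quadratic form of the $2\times2$ matrix $\begin{pmatrix}h_{1,I}&B\\B&d_1\end{pmatrix}$ evaluated at $(\cos\varphi,\sin\varphi)$.

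For the regret bound \eqref{eq:enlarged-regret}, I bound $h_1$ by the top eigenvalue of that $2\times2$ matrix. Its excess over $h_{1,I}$ is $(\sqrt{g'^2+4B^2}-g')/2=2B^2/(\sqrt{g'^2+4B^2}+g')$, with $g'=h_{1,I}-d_1\ge g$. This expression decreases in $g'$, so it is at most the value at $g$, which is at most $B^2/g$. The lower bound $h_1\ge h_{1,I}$ is trivial. The alternative bound follows from $h_1\le\lambda_{\max}(\widetilde M)+e$ and $h_{1,I}\ge a-e$.

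For the leakage bound \eqref{eq:enlarged-leakage}, put $\sigma=|\sin\varphi|$. A response with deficit at most $\rho$ satisfies
\[
 h_1-\rho\le u^\top M_1u\le h_{1,I}-g\sigma^2+2B\sigma .
\]
Combining this with $h_1\ge h_{1,I}$ gives $g\sigma^2-2B\sigma-\rho\le0$, so $\sigma\le(B+\sqrt{B^2+g\rho})/g$. The cap at $1$ is trivial. For an exact maximizer, $(\cos\varphi,\sigma)$ must maximize the $2\times2$ quadratic form, so it is its top eigenvector. Computing that eigenvector's angle gives $\sigma\le B/\sqrt{g^2+B^2}$ after using $g'\ge g$.

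The substantive part is the uniform radius, and I expect it to be the main obstacle. Normalize $c=\max_jW_{jj}=1$. At the balanced base, the top-to-complement gap of $\mathcal E_{\rm all}=\ker X_0^\top$ is $\gamma_{\rm all}=\alpha_s-\beta_s>0$ by Theorem~\ref{thm:sharp-geometry}. This gap is independent of $W$ and positive at each of the finitely many budgets. Next I bound every perturbation quantity by $O(\epsilon)$ uniformly in the query. Since $W\preceq d\operatorname{diag}(W_{jj})\preceq dI$, we have $\|W\|\le d$. Theorem~\ref{thm:general-sensitivity} then gives $\|M_1-M_s(X_0,W)\|\le d\mathcal M_s\epsilon$. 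Separately, $\|T_1-T_0\|\le v\epsilon$ with $T_0=I$ on contrasts bounds the rotation of $K_1^{1/2}\mathcal E_{\rm all}$ away from $\mathcal E_{\rm all}$, as in the proof of Theorem~\ref{thm:transfer}.

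At the base, the cross block $b$ vanishes and $a-d_0=\gamma_{\rm all}$. After the perturbation, I expect $B=O(\epsilon)$ and $g\ge\gamma_{\rm all}-O(\epsilon)-2e$. The error $e=e_{\rm end}$ also needs to be $O(\epsilon)$ with a query-free constant, and I would check this directly from \eqref{eq:endpoint-error-bound}. Taking the minimum over $d\le s<m$ gives one radius with $g\ge\gamma_{\rm all}/2$, which yields regret $O(\epsilon^2)$ and leakage $O(\epsilon+\sqrt\rho)$ uniformly. For $\mathcal E_{I_\vartheta}$, the same argument applies with the gap $\min\{\alpha_s\vartheta,\alpha_s-\beta_s\}$.
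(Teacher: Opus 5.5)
Your regret, alternative and leakage arguments are the paper's, and they are correct. You transfer the compressions through $\|M_1-\widetilde M\|\le e$, bound $u^\top M_1u$ by the $2\times2$ block form, use that $2B^2/(\sqrt{g'^2+4B^2}+g')$ decreases in the gap, and solve $g\sigma^2-2B\sigma\le\rho$.

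\textbf{Gap: the exact-maximizer bound.} The claim that $(\cos\varphi,\sigma)$ must maximize the $2\times2$ quadratic form is false. That form only majorizes $u^\top M_1u$, and $h_1$ is generally strictly below its top eigenvalue. For instance, if $Q^\top M_1U=0$ but $e>0$, the top eigenvector of $M_1$ has $\sigma=0$. The $2\times2$ form with off-diagonal entry $B\ge e$ is maximized at a tilted vector. From the majorant alone you only get $h_{1,I}\le h_1\le q(\cos\varphi,\sigma)$, i.e.\ $\tan\varphi\le 2B/g$. That is the $\rho=0$ case of \eqref{eq:enlarged-leakage} and loses a factor two in the tangent.

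The missing ingredient is stationarity of the top eigenvector. Projecting $M_1u=h_1u$ onto the complement gives
\[
 (h_1I-Q^\top M_1Q)\,Q^\top u=Q^\top M_1U\,U^\top u .
\]
Here $h_1I-Q^\top M_1Q\succeq(h_1-d_0-e)I$ and $h_1-d_0-e\ge h_{1,I}-d_0-e\ge g>0$. Taking norms gives $g\sigma\le B\cos\varphi$, hence $\sigma\le B/\sqrt{g^2+B^2}$, with no assumption on ties.

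\textbf{The uniform claim.} Your outline matches the paper's. Using Theorem~\ref{thm:general-sensitivity} with $\|V\|\le d/\lambda_0$ to bound $\eta=\|M_1-M_0\|$ is a legitimate substitute for the paper's joint majorant in $h=\epsilon+\|\Delta P\|$. However, the steps you label as expectations are where the work lies. Let $f=\omega/(1-\omega)$ bound the rotation of $K_1^{1/2}\mathcal E_I$. You need two explicit inequalities:
\begin{itemize}
\item $g\ge\gamma-(\alpha_s+\gamma)f^2-2\eta-4e$. This uses block-diagonality of $M_0$ to show that projecting a base top vector onto the new trial space keeps Rayleigh quotient at least $\alpha_s(1-f^2)$.
\item $B\le\eta+2\alpha_sf+2e$. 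This uses that the base cross block vanishes and that $\|M_0\|=\alpha_s$ after normalization.
\end{itemize}
You also need a query-free bound on $e_{\rm end}$. The paper proves $e_{\rm end}\le\bar e(h)=O(h^2)$ in Appendix~\ref{app:enlarged}; this requires a lower bound on $\mu$ and control of the calibrated penalty change. Your weaker requirement $e=O(\epsilon)$ suffices for the stated orders, but it is a genuine estimate on \eqref{eq:endpoint-error}, not a formality.

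A minor point: at the base you only have $a-d_0\ge\gamma_{\rm all}$, not equality, because the mean block's top eigenvalue is merely at most $\beta_sc$.
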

\begin{proof}
Let $A=\lambda_{\max}(U^\top M_1U)$,
$D=\lambda_{\max}(Q^\top M_1Q)$ and $b_1=\|Q^\top M_1U\|$.
Compression and the variational principle give $A-D\ge g$ and $b_1\le B$.
For a unit vector $x=Uu+Qv$,
\[
 x^\top M_1x\le A\|u\|^2+D\|v\|^2+2b_1\|u\|\|v\|.
\]
The largest eigenvalue of the corresponding two-by-two matrix, minus
$A$, is $2b_1^2/(\sqrt{(A-D)^2+4b_1^2}+A-D)$.
It decreases with the gap and increases with the cross norm. This proves
\eqref{eq:enlarged-regret}, the classical block Rayleigh bound
\citep{lili2005}. Weyl's inequality proves the alternative bound.
Here $A=h_{1,I}$ because the trial space is $K_1^{1/2}\mathcal E_I$.

For leakage put $t=\|v\|$. Since $h_1\ge A$,
\[
 h_1-x^\top M_1x\ge(A-D)t^2-2b_1t\sqrt{1-t^2}\ge gt^2-2Bt.
\]
Solving this quadratic proves \eqref{eq:enlarged-leakage}.
For a top eigenvector, its complementary eigen-equation gives
$(h_1-D)t\le b_1\sqrt{1-t^2}$, proving the sharper bound, including ties.
A surrogate deficit at most $r$ implies true deficit at most $r+2e$.
Thus the leakage statement also has an evaluable surrogate test.

For the uniform claim, Appendix~\ref{app:enlarged} proves
$\|M_1-M_0\|\le Lh$ and $e\le\bar e(h)=O(h^2)$, with
$h=\epsilon+\|P_1-P_0\|$ and constants independent of normalized $W$.
It bounds the moving-space projector by $f(h)=O(h)$ and proves
\[
 g\ge\gamma-(\alpha_s+\gamma)f(h)^2-2Lh-4\bar e(h),\quad
 B\le Lh+2\alpha_sf(h)+2\bar e(h).
\]
The explicit dyadic choice \eqref{eq:enlarged-dyadic} makes
$g\ge\gamma/2>0$ on a whole ball. The full-space gap is
$\gamma=\alpha_s-\beta_s$, or use the threshold gap above.
Exact calibration sensitivity gives $h\le(1+L_{X,s})\epsilon$.
The finite minimum \eqref{eq:enlarged-feature-radius} then proves the
common feature radius, quadratic regret and leakage orders.
\end{proof}

This argument uses the top value in the retained space, not its lowest
value. Some retained contrast eigenvalues may lie below mean eigenvalues.
There is no assertion that the entire retained space is a separated
spectral cluster. The full contrast space has dimension $m-d$ and need
not be small. Its common radius removes dependence on internal query
gaps; it does not make the sharp risk itself vary quadratically.

\subsection{The mixture gain under the same perturbation}
The preceding results approximate a response search. We now bound the
change of the estimator gain itself. This is a separate argument.
At $X=X_0+H$, use the exactly calibrated ridge fit and the same-subset
inclusion-weighted estimate
\[
 w_H=L_0(X)\operatorname{diag}(\mathbf1_S/\pi(X))y,\qquad
 L_0(X)=(X^\top X+\lambda_0I)^{-1}X^\top.
\]
Here $\pi_i(X)$ is the actual inclusion probability under the recalibrated
law. Both means equal the new Full fit. Let $h_t(X,W)$ be the mixture's
sharp risk over unrestricted $y\ne0$, using the new Full loss; put
$h_R=h_0$. Keep the base feature-selected weight interval $0<t\le T_s$.

\begin{theorem}[A local bound on the mixture gain]\label{thm:gain}
Fix a balanced base, target and budget from Proposition~\ref{prop:mixture}.
Write $\alpha(t)=A-Dt+Qt^2$ and $T=T_s$.
There are explicit $r_1,C>0$, independent of query and weight, such that
for $\epsilon=\|H/\sqrt{\lambda_0}\|_F\le r_1$, every nonzero PSD $W$
and every $0<t\le T$ satisfy, with $c=\max_jW_{jj}$,
\begin{equation}\label{eq:gain-modulus}
 |h_R(X,W)-h_t(X,W)-t(D-Qt)c|\le Ct\epsilon c.
\end{equation}
Thus $\epsilon\le\min\{r_1,D/(4C)\}$ gives gain at least $Dtc/4>0$.
\end{theorem}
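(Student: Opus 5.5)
At the balanced base $X_0$, Proposition~\ref{prop:mixture} already gives exact values: $h_R(X_0,W)=\alpha(0)c=Ac$ and $h_t(X_0,W)=\alpha(t)c$. Hence the base gain is exactly $t(D-Qt)c$. So \eqref{eq:gain-modulus} reduces to showing that the \emph{difference} of the two sharp risks moves by at most $Ct\epsilon c$ when we pass from $X_0$ to $X=X_0+H$. The factor $t$ is essential, so bounding each sharp risk separately by Theorem~\ref{thm:general-sensitivity} only gives $O(\epsilon c)$ and is too weak. Instead I will write the mixture's normalized operator as an exact quadratic in $t$ at every design. Put $F_S^{(t)}=F_S+t\Delta_S$, with $\Delta_S=F^H_S-F_S$, where $F^H_S=L_0(X)\operatorname{diag}(\mathbf 1_S/\pi(X))$. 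Both maps have mean $F_0$, so $C^{(t)}=C_R+t\,\mathcal X+t^2\mathcal Q$. Here $\mathcal X=\mathbb E[(F_S-F_0)^\top W\Delta_S+\Delta_S^\top W(F_S-F_0)]$ and $\mathcal Q=\mathbb E\Delta_S^\top W\Delta_S$. After normalizing by $K_X^{-1/2}$ this gives $M_t(X)=M_R(X)+t\,N_1(X)+t^2N_2(X)$.

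The next step is Lipschitz control of the pieces, with constants proportional to $c$. Because $\|W\|\le dc$ and the first-order bounds are linear in $W$, a constant uniform over normalized queries follows once $c=1$ after scaling. I will bound $\|N_i(X)-N_i(X_0)\|\le C_i\epsilon c$ for $i=1,2$, following the proof of Theorem~\ref{thm:general-sensitivity}. That proof used the recalibrated penalty via the moment-map inverse, the total-variation bound on the law, and differentiation of the fit maps. For the Horvitz--Thompson leg I additionally need the inclusion probabilities $\pi_i(X)$ to be Lipschitz in $\epsilon$ and bounded below. They are marginals of the DPP with kernel $I+XR_s^{-1}X^\top$, where $R_s$ is Lipschitz, and at the base $\pi_i=s/m>0$. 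For small $r_1$ this gives $\pi_i\ge s/(2m)$, and $L_0(X)$ is plainly smooth. The ridge part $M_R$ is handled by Theorem~\ref{thm:general-sensitivity} itself.

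The key comparison is between two perturbed Rayleigh maxima, and here I plan to use the base gap and eigenspace structure. At the base the top eigenspace of every $M_t(X_0)$, $0\le t\le T$, is the same raw space $\mathcal E$ in normalized coordinates. Its gap is at least $\min\{\gamma_s$-type contrast gap$,(\alpha(t)-\beta(t))c\}\ge c\,g/2$ on the protected interval. Taking $\mathcal E_{\rm all}$ avoids dependence on query diagonal ties, exactly as in Theorem~\ref{thm:enlarged}. Let $u$ be a top eigenvector of $M_t(X)$. Then
\[
h_R(X)-h_t(X)\ge u^\top(M_R-M_t)u=-t\,u^\top(N_1+tN_2)u .
\]
Symmetrically, for a top eigenvector $u_R$ of $M_R(X)$,
\[
h_R(X)-h_t(X)\le -t\,u_R^\top(N_1+tN_2)u_R .
\]
Both right-hand sides carry the factor $t$. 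Each equals $-t\,u^\top(N_1+tN_2)(X_0)u+O(t\epsilon c)$. By the Davis--Kahan bound from Theorem~\ref{thm:enlarged}, $u$ and $u_R$ lie within $O(\epsilon)$ of the base normalized contrast space. On that space the base form $-(N_1+tN_2)(X_0)$ acts, on the maximal-query sector, as $(D-Qt)c$. A mixed sector-leakage term could contribute only $O(\epsilon c)$. However, the top eigenvector of the base $M_t(X_0)$ and of $M_R(X_0)$ lies in the same space $\mathcal E$, and on that space both operators equal scalars times the identity on the $c$-sector. So evaluating the quadratic form at a vector $O(\epsilon)$ from $\mathcal E$ costs $O(\epsilon)$ times $\|N_1+tN_2\|=O(c)$. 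The two one-sided bounds therefore sandwich the gain within $t(D-Qt)c\pm Ct\epsilon c$.

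The main obstacle is uniformity of the eigenvector localization in the query. With $\mathcal E$ itself the gap depends on near-ties of $W_{jj}$. I will switch to a thresholded space $\mathcal E_{I_\vartheta}$ or the full contrast space, on which the base forms are still diagonal per group with entries $W_{jj}(\alpha(0)-\alpha(t))$. Near-maximal groups then contribute $t(D-Qt)W_{jj}\in t(D-Qt)c[1-\vartheta,1]$. Choosing $\vartheta=\epsilon$ absorbs this into $Ct\epsilon c$, at the price of a gap $\gtrsim c\min\{\alpha\epsilon,\alpha-\beta\}$. To preserve an $O(\epsilon)$ leakage I will instead use the full contrast space with gap $(\alpha-\beta)c/2$. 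Inside it, the perturbed top vectors concentrate on the near-maximal groups because the base contrast eigenvalues are $\alpha W_{jj}$. The final corollary is then arithmetic: for $\epsilon\le D/(4C)$ and $t\le T\le D/(2Q)$ we have $t(D-Qt)c-Ct\epsilon c\ge tDc/2-tDc/4$.
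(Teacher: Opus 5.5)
Your architecture matches the paper's. You use the exact expansion $M_t=M_R+t(N_1+tN_2)$ (the paper's $\mathcal J_t$), Lipschitz bounds that keep the factor $t$ on $M_t-M_R$, and two one-sided Rayleigh comparisons at top eigenvectors $u$ of $M_t(X)$ and $u_R$ of $M_R(X)$.

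The gap is the final step. There you need $u^\top\mathcal N u=(D-Qt)c+O(\epsilon c)$ and the same for $u_R$, where $\mathcal N=-(N_1+tN_2)(X_0)$, uniformly in $W$. None of the tools you invoke delivers this.

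\begin{itemize}
\item The base gap you claim, $\ge cg/2$, is false when diagonals nearly tie, as you then notice.
\item Davis--Kahan onto $\mathcal E_{I_\vartheta}$ trades an $O(\vartheta c)$ spread of the $W_{jj}$ inside $I_\vartheta$ against a leakage of order $\epsilon/\vartheta$ entering squared. The resulting error is of order $\vartheta+(\epsilon/\vartheta)^2$, which is never $O(\epsilon)$ for any choice of $\vartheta$.
\item The full contrast space controls only the mean-sector leakage. It says nothing about which contrast groups $u$ occupies.
\end{itemize}

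Your closing sentence, that the top vectors ``concentrate on the near-maximal groups,'' is exactly the estimate that is needed. It is asserted without a mechanism.

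The missing idea is to use the Rayleigh deficit itself rather than a subspace angle. Put $H_B=\alpha(t)cI-M_t(X_0)$ and $E_B=M_t(X)-M_t(X_0)$. A unit top eigenvector $u$ of $M_t(X)$ then satisfies $u^\top H_Bu\le2\|E_B\|$. Compare this with the quantity you must control:
\begin{itemize}
\item On contrast group $j$, $H_B=\alpha(t)(c-W_{jj})I$ and $(D-Qt)cI-\mathcal N=(D-Qt)(c-W_{jj})I$. These are exactly proportional, and both vanish on tied maximal groups.
\item On the mean sector, $H_B\succeq(g/2)cI$. This dominates a deviation of norm at most $(D+m_*)c$, with $m_*=|B_1|+TB_2+d(|\tau_1|+T|\tau_2|)$.
\end{itemize}
Hence $-kH_B\preceq(D-Qt)cI-\mathcal N\preceq kH_B$, with $k=\max\{D/[(1-T)^2A],\,2(D+m_*)/g\}$ and using $\alpha(t)\ge(1-T)^2A$. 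It follows that $|u^\top\mathcal Nu-(D-Qt)c|\le2k\|E_B\|$. The same argument with $H_A=AcI-M_R(X_0)$, whose mean-sector gap is $gc$, handles $u_R$.

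This is the paper's lemma \eqref{eq:gain-deficit-lemma}, and it gives $C=L_D+2kL_M$. It needs no eigenspace localization, no lower bound on a query-diagonal gap, and no appeal to Theorem~\ref{thm:enlarged}; it only needs $\|u_M\|^2=O(\epsilon)$, which the Rayleigh bound already supplies. With this replacement your proof goes through, and your closing arithmetic for the gain $Dtc/4$ is correct.
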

The factor $t$ comes from the coupled same-subset operators and their
base deficits. Two separate Lipschitz bounds on the sharp risks would
lose this factor. The following proof includes both comparisons.
\subsection{The coupled operator bound}
Use the physical coefficients $A,D,Q,B_0,B_1,B_2$ and $T=T_s$ from
Section~\ref{sec:mixture}; set $g=A-B_0>0$ and
$\tau(t)=\tau_0+\tau_1t+\tau_2t^2$. The coefficients of $\tau$ are
obtained by expanding \eqref{eq:mixture-coeff}. Explicitly, with $f,v,b$
from Section~\ref{sec:mixture},
\begin{align*}
 \tau_0&=-\operatorname{Cov}(f(K_1),f(K_2))/b,\qquad
 \tau_2=-\operatorname{Cov}(v(K_1),v(K_2))/b,\\
 \tau_1&=-\{\operatorname{Cov}(f(K_1),v(K_2))+
                  \operatorname{Cov}(v(K_1),f(K_2))\}/b.
\end{align*}
These are physical coefficients; they are not divided by $\lambda_0$.

Set $Z_0=X_0/\sqrt{\lambda_0}$, $E=H/\sqrt{\lambda_0}$, and
$Z_x=Z_0+xE$. In \eqref{eq:radius-constants}--\eqref{eq:composite-constants}
take $A_0=\lambda_0I$ and evaluate all constants at $X_0$. To avoid
confusing the metric bound there with the mixture weight, denote that
bound by $\theta=\sqrt{1+v^2}$, and set
\[
 q_* = L_p,\qquad r_1=\min\{r_0,(2q_*)^{-1}\},\qquad
 \pi_*=(s/m)e^{-1}.
\]
Equation~\eqref{eq:penalty-derivative} gives
$P_x=R_s(X_0+xH)/\lambda_0\succeq pI$ and
$\|\dot P_x\|_F\le L_X\epsilon$ throughout this ball.
Let $p_x(S)$ be the normalized subset probability. The log-determinant
bound in Appendix~\ref{app:transfer} implies
\[
 |\partial_x\log p_x(S)|\le2q_*\epsilon,\qquad
 |\dot\pi_i|=|\mathbb E_x[\mathbf1_{i\in S}\partial_x\log p_x(S)]|
       \le2q_*\epsilon\pi_i.
\]
All inclusions are positive. Their base values are $s/m$, so integration
and differentiation of the reciprocal give
\begin{equation}\label{eq:gain-inclusions}
 \pi_i(x)\ge(s/m)e^{-2q_*\epsilon}\ge\pi_*,\qquad
 |\partial_x\pi_i^{-1}|\le2q_*\epsilon/\pi_*.
\end{equation}
This step uses the changing inclusions, rather than their base values.

Write $D_S=\operatorname{diag}(\mathbf1_S)$ and
\begin{gather*}
 A_S=Z_x^\top D_SZ_x+P_x,\quad L_S=A_S^{-1}Z_x^\top D_S,\quad
 L=(I+Z_x^\top Z_x)^{-1}Z_x^\top,\\
 I_S=\operatorname{diag}(\mathbf1_S/\pi),\quad
 F_R=L_S-L,\quad F_H=L(I_S-I),\quad G=F_H-F_R.
\end{gather*}
The physical output maps are these maps divided by $\sqrt{\lambda_0}$.
The fit bounds from Appendix~\ref{app:transfer} and
\eqref{eq:gain-inclusions} yield the following atom and derivative bounds:
\begin{align*}
 B_R&=(2\sqrt p)^{-1}+1/2,&
 D_R&=5/(4p)+5/4+L_X/(2p^{3/2}),\\
 B_H&=(2\pi_*)^{-1},&D_H&=(5/4+q_*)/\pi_*,\\
 B_G&=B_R+B_H,&D_G&=D_R+D_H.
\end{align*}
Here $\|F_R\|\le B_R$, $\|\dot F_R\|\le D_R\epsilon$, and likewise
for $F_H,G$. For example, $\|I_S-I\|\le1/\pi_*$,
$\|L\|\le1/2$, and $\|\dot L\|\le5\epsilon/4$ give the $H$ bounds.
With $T_x=(I+Z_xZ_x^\top)^{1/2}$, the Sylvester calculation in
Appendix~\ref{app:transfer} gives $\|T_x\|\le\theta$ and
$\|\dot T_x\|\le v\epsilon$. Hence, for $U_R=F_RT_x$, $U_G=GT_x$, put
\[
 a_R=\theta B_R,\quad d_R=\theta D_R+vB_R,\qquad
 a_G=\theta B_G,\quad d_G=\theta D_G+vB_G.
\]
These bound $U_R,U_G$ and their derivatives, with a factor $\epsilon$
on each derivative bound.

Exact calibration and inclusion weighting give the same mean $Ly$
in target coordinates. Their shared-subset covariance therefore gives
\begin{equation}\label{eq:gain-coupling}
 M_t=M_R+t\mathcal J_t,\qquad
 \mathcal J_t=\mathbb E_x[U_R^\top VU_G+U_G^\top VU_R+tU_G^\top VU_G],
 \quad V=W/\lambda_0.
\end{equation}
This polynomial identity also defines $\mathcal J_0$ without division by zero.
Since $\|V\|\le dc/\lambda_0$, define
\begin{align}
 L_R&=\frac d{\lambda_0}(2a_Rd_R+2q_*a_R^2),\nonumber\\
 L_D&=\frac d{\lambda_0}\{2(d_Ra_G+a_Rd_G)+2Ta_Gd_G
                         +2q_*(2a_Ra_G+Ta_G^2)\},\qquad
 L_M=L_R+TL_D.\label{eq:gain-basic-constants}
\end{align}
Differentiating the finite expectation in \eqref{eq:gain-coupling}
differentiates both its atom and its probability. The first terms in
\eqref{eq:gain-basic-constants} bound atom derivatives; the terms with
$q_*$ bound the normalized score contribution. The cross atom may be
indefinite, so we use its full norm. Integration gives, simultaneously
for $0\le t\le T$,
\begin{equation}\label{eq:gain-three-bounds}
 \|E_A\|\le L_R\epsilon c,\quad \|E_B\|\le L_M\epsilon c,\quad
 \|E_B-E_A\|\le tL_D\epsilon c,
\end{equation}
where $E_A=M_R(X,W)-M_R(X_0,W)$ and
$E_B=M_t(X,W)-M_t(X_0,W)$. In particular the last bound retains $t$.

\subsection{Comparing two moving sharp risks}
We give the elementary spectral step explicitly. Suppose symmetric
operators $\mathsf A,\mathsf B$ have largest eigenvalues $a,b$, and write
$H_A=aI-\mathsf A$, $H_B=bI-\mathsf B$, and $N=H_A-H_B$. If
\[
 -ktH_A\preceq N\preceq ktH_A,\qquad
 -ktH_B\preceq N\preceq ktH_B,
\]
then arbitrary symmetric changes $E_A,E_B$ satisfy
\begin{equation}\label{eq:gain-deficit-lemma}
 |\lambda_{\max}(\mathsf A+E_A)-\lambda_{\max}(\mathsf B+E_B)-(a-b)|
 \le\|E_B-E_A\|+2kt\max\{\|E_A\|,\|E_B\|\}.
\end{equation}
Indeed, any unit top eigenvector $u$ of $\mathsf A+E_A$ obeys
$u^\top H_Au\le2\|E_A\|$ by the Rayleigh principle. Evaluate
$\mathsf B+E_B=\mathsf A+E_A-(a-b)I+N+E_B-E_A$ at $u$ for one bound.
Evaluate the reversed equality at a unit top eigenvector of
$\mathsf B+E_B$, using $v^\top H_Bv\le2\|E_B\|$, for the other.
This argument allows multiple top eigenvalues.

Apply this to the two base operators. Their top values are $Ac$ and
$\alpha(t)c$, where $\alpha(t)=A-Dt+Qt^2$. On contrast group $j$,
\[
 H_A=A(c-W_{jj})I,\quad H_B=\alpha(t)(c-W_{jj})I,\quad
 N=t(D-Qt)(c-W_{jj})I.
\]
Moreover $\alpha(t)\ge(1-T)^2A$: its defining square contains the
nonnegative summands $(1-t)h(k)$ and $tc_H$. On the mean sector,
$H_A\succeq gcI$ and $H_B\succeq(g/2)cI$ by
Section~\ref{sec:mixture}. The mean part of $N$ has norm at most
$t(D+m_*)c$, where
\[
 m_*=|B_1|+TB_2+d(|\tau_1|+T|\tau_2|),\qquad
 k=\max\left\{\frac D{(1-T)^2A},\frac{2(D+m_*)}g\right\}.
\]
Thus both comparisons required for \eqref{eq:gain-deficit-lemma} hold.
A tied contrast diagonal makes all three corresponding deficits zero;
no lower bound on a query diagonal gap is used. Equations
\eqref{eq:gain-three-bounds} and \eqref{eq:gain-deficit-lemma} prove
Theorem~\ref{thm:gain} with
\[
 C=L_D+2kL_M,\qquad r_{\rm gain}=\min\{r_1,D/(4C)\}.
\]
Since $D-Qt\ge D/2$ on $[0,T]$, this radius gives the stated gain.
All constants depend only on the fixed base, target, and budget.

\subsection{Exact finite neighborhoods}
The explicit constants above prove a positive local radius.
Appendix~\ref{app:gain} sharpens them by transporting the calibration
Jacobian and the probability-weighted derivative energies from the base.
This certifies a radius of $1/1000$ in $\|H/\sqrt{\lambda_0}\|_F$ for
three specified matched bases:
\[
 (n,d,s;r,\lambda_0)=(2,2,2;2/3,5/3),\quad
 (2,2,3;5/4,55/31),\quad (2,3,3;3/2,363/101).
\]
For every point in each closed ball, every nonzero PSD query and every
$0<t\le T_s$, the two-sided modulus \eqref{eq:gain-modulus} and the
positive gain $Dtc/4$ hold. Table~\ref{tab:gain-certificates} states the
exact weight endpoints and strict rational gate bounds. Its proof gives
all transport formulas and rational enclosure rules. The finite coefficient
and energy records are reconstructible by the supplied ancillary scripts.
These statements assume exact endpoint calibration. They certify these
three neighborhoods, not approximate calibration software or prediction
performance on a data distribution.

\section{Scope and open questions}\label{sec:conclusion}
Established mean identities and duality give exact target calibration
at a fixed row budget. The feasibility threshold and unique matched penalty
provide the basis for covariance comparisons.
The main analysis proves strict sector inequalities and resolves response
equality in the stated design regimes. Complete response geometry then
answers a stronger question than a matrix ceiling alone.
In the stated exact regimes, it identifies the sharp risk and all responses
that attain it.

The exact regimes have distinct boundaries.
General LOO geometry gives an attainment test, but does not compute the
smaller optimum when its envelope is strict.
The balanced theorem covers all budgets from the dimension upward.
Its lower-budget phase example prevents a blanket extension below that range.
The ETF pair theorem characterizes flat queries.
The triple theorem currently requires an isotropic query.
Neither implies an all-budget ETF classification or optimality over designs.

These results leave both conceptual and extension questions.
Can structural inequalities replace the longer coefficient certificates?
At three ETF deletions, does flat query energy suffice beyond isotropic queries?
The constructive conclusions suggest further questions.
Can one classify general all-budget sharp risk without replication or frame
structure? Which nonflat ETF queries admit residual maximizers?
Can the exact mean-share phase be transported through full recalibration?
Can useful query-uniform radii be obtained without retaining all base contrasts?
The present results isolate the operators, gaps and equality conditions
that these extensions must address.

\section*{AI use statement}
We used generative AI tools to assist with mathematical exploration,
formulating and developing proofs, literature search, experiment design,
implementation, and interpretation of results. We also used these tools
to draft and edit text. The workflow included separate proof checks and
independent numerical checking implementations. The author is responsible
for the final claims, proofs, references, code, and presentation.

\clearpage
\appendix
\section{The calibrated LOO coefficient-matrix ceiling}\label{app:calibration-support}
We prove Proposition~\ref{prop:loo-matrix-contact}.
Keep its calibrated $R$, put $H=G+R$, and define
\[
 w=H^{-1}X^\top y,\qquad e=y-Xw,\qquad
 J_R=\lVert e\rVert^2+w^\top Rw,\qquad
 a_i=H^{-1/2}x_i,\qquad c_i=1-\lVert a_i\rVert^2=1-\ell_i.
\]
Here $w$ is the full-data fit at the raw calibrated penalty $R$.
It is distinct from the prescribed target $w_0$.
Also put $U=XH^{-1/2}$ and
$V=H^{1/2}\Sigma_{m-1}(y)H^{1/2}$.
Since $R\prec A_0$ by \eqref{eq:penalty-upper}, inverse order gives
\begin{equation}\label{eq:matrix-target-slack}
 J_0-J_R=y^\top X\{H^{-1}-(G+A_0)^{-1}\}X^\top y\ge0,
 \qquad J_0=J_R\ \Longleftrightarrow\ X^\top y=0.
\end{equation}
The inverse difference is positive definite. The deletion identities
\eqref{eq:loo-deletion}, followed by subtraction of the mean square,
give
\[
 V=\frac1D\sum_i\frac{e_i^2}{c_i}a_ia_i^\top
   -\frac1{D^2}(U^\top e)(U^\top e)^\top.
\]
Thus the exact whitened deficit is
\begin{align}\label{eq:matrix-contact-decomposition}
 J_0\frac{t_\ell}{D}I-V
 ={}&\frac{t_\ell}{D}(J_0-J_R+w^\top Rw)I\nonumber\\
 &+\frac1D\sum_i e_i^2
       \left(t_\ell I-\frac{a_ia_i^\top}{c_i}\right)
   +\frac1{D^2}(U^\top e)(U^\top e)^\top.
\end{align}
Each term is positive semidefinite: $t_\ell>0$ by full column rank,
$c_i>0$, and the sole nonzero eigenvalue of $a_ia_i^\top/c_i$ is
$\ell_i/(1-\ell_i)\le t_\ell$.
Congruence proves the ceiling.

If $w\ne0$, then $w^\top Rw>0$, so the first term of
\eqref{eq:matrix-contact-decomposition} is positive definite and its
kernel is zero. If $w=0$, then $X^\top y=0$, $e=y$, $J_0=J_R$,
and the first and last terms vanish. For each active index $y_i\ne0$,
the bracket has a nonzero kernel only if $\ell_i/(1-\ell_i)=t_\ell$.
Its kernel is then $\operatorname{span}(a_i)$.
A common nonzero kernel therefore requires every active row to have
maximal leverage and all active $a_i$ to lie on one line.
Equal leverage on that line means equal nonzero lengths, so these
vectors are signed copies of one vector $a$.
Multiplication by $H^{1/2}$ makes the original active rows signed copies
of $v=H^{1/2}a$. Conversely, these conditions make every active bracket
vanish on $a$, and their common kernel is exactly its span.
Finally, if $\Delta=J_0(t_\ell/D)H^{-1}-\Sigma_{m-1}(y)$, then
\[
 \ker\Delta=H^{1/2}\ker(H^{1/2}\Delta H^{1/2}).
\]
The raw coefficient query kernel is therefore precisely
$\operatorname{span}(v)$, as claimed.

\section{Exact balanced sign algebra}\label{app:geometry}
This appendix proves the strict sign for every positive raw penalty $r$.
Calibration then chooses $r=r_s$, with the same fixed target at each
budget. In this appendix only, put
$\rho=\mathbb E[K_1/(r+K_1)]$ and $b=n(1-\rho)$.
The induced target is $\lambda_0=n(1-\rho)/\rho>0$.

We first prove $\tau>0$ and reduce $\alpha>\beta$ to a reciprocal-moment
inequality. The strict comparison has three cases: $n=2$ in any dimension
(Appendix~\ref{app:two-replicas}); $d=2,3$ for the remaining replica counts
(Appendix~\ref{app:small-dimensions}); and $n\ge3,d\ge4$
(Appendix~\ref{app:general-sign}). In the last case, two reciprocal lower
bounds cover the root domain. Polynomial certificates prove the root signs
and the monotonicity needed to pass from the root to the actual mean.
The LOO endpoint has a separate direct formula. The final deficit identifies
all equality responses.

\subsection{Covariance and the sign of the cross term}
Conditional on the counts, subsets in different groups are independent
and uniform. If $T_j$ is the sum of selected signed contrasts in group
$j$, then
\[
 \mathbb E[T_j\mid K]=0,\qquad
 \operatorname{Var}(T_j\mid K)=\frac{K_j(n-K_j)}{n(n-1)}E_j,\qquad
 (w_S)_j=f(K_j)q_j+\frac{T_j}{r+K_j}.
\]
Total covariance and exchangeability prove
\eqref{eq:block-covariance}, with the coefficients
\eqref{eq:geometry-coefficients}. In particular, $\beta$ is the
transverse eigenvalue of the count-score covariance divided by $b$;
it is not its diagonal variance divided by $b$.

We give a pairwise dependence argument that will also serve the mixture.
Let $a_k=\binom nk(r+k)$ and
\[
 g(x)=\sum_{k=0}^na_kx^k=(1+x)^{n-1}[r+(r+n)x].
\]
For $d\ge3$, put $H_\ell=[x^\ell]g(x)^{d-2}$. This polynomial has
only negative real roots and positive coefficients on its full support
$0,\ldots,(d-2)n$. Newton's coefficient inequality implies ordinary
log-concavity of $(H_\ell)$. The conditional mass of $K_2=\ell$ given
$K_1=k$ is proportional to $a_\ell H_{s-k-\ell}$. For adjacent possible
$k,k+1$, the ratio of the new to old conditional masses is proportional to
\[
 \frac{H_{s-k-1-\ell}}{H_{s-k-\ell}}.
\]
Log-concavity makes this ratio nonincreasing in $\ell$. Both support
endpoints $\max(0,s-k-(d-2)n)$ and $\min(n,s-k)$ move down as $k$
increases. At a new lower point the ratio is infinite; at a lost upper
point it is zero. Thus the ordering includes the support boundaries:
the conditional law of $K_2$ decreases stochastically with $k$.
For $d=2$, $K_2=s-K_1$ gives the same conclusion directly.

These conditional laws are distinct because their means are
$(s-k)/(d-1)$. Consequently, for a strictly increasing function $v$,
$\mathbb E[v(K_2)\mid K_1=k]$ strictly decreases on the possible $k$.
For any strictly increasing $u$, the independent-copy identity
\[
 2\operatorname{Cov}(u(K_1),v(K_2))
 =\mathbb E[(u(K_1)-u(K_1'))
  (\mathbb E[v(K_2)\mid K_1]-\mathbb E[v(K_2)\mid K_1'])]
\]
is strictly negative when $1\le s<nd$, since $K_1$ is nonconstant.
Here the prime denotes an independent copy. Taking $u=v=f$ proves
$\tau>0$. This argument uses the actual determinant count law.

\subsection{Deletion identities and a reciprocal reduction}\label{app:deletion}
Write $M=d(n-1)$ and define
\begin{align*}
 Z&=[x^s]g(x)^d,\\
 Q&=[x^s](1+x)^{M+1}[r+(r+n)x]^{d-1},\\
 T&=[x^{s-1}](1+x)^M[r+(r+n)x]^{d-2},\\
 A&=[x^{s-1}]g(x)^{d-1}
       \sum_{j=0}^{n-2}\binom{n-2}{j}\frac{x^j}{r+1+j}.
\end{align*}
All four are positive for $d\le s<nd$. Direct deletion of determinant
factors gives
\begin{equation}\label{eq:deletion-coefficients}
 \alpha=\frac AZ,\qquad
 \beta=\frac{nT-(n-1)A}{(n+r)Q}.
\end{equation}
For details, put $R_1=Z\mathbb E f(K_1)$,
$C_{12}=Z\mathbb E[f(K_1)f(K_2)]$, and
$D_1=Z\mathbb E[K_1/(r+K_1)^2]$. The binomial identities give
\[
 Z-R_1=rQ,\qquad (n+r)D_1-R_1=n(n-1)A,\qquad
 nR_1-(n+r)C_{12}=n^2rT.
\]
The transverse covariance numerator is $R_1-rD_1-C_{12}$, while
$b=nrQ/Z$. These prove \eqref{eq:deletion-coefficients}.

For an auxiliary proof experiment, select $s-1$ items with product
weights from $nd-2$ items: $M$ ordinary items of weight one and $d-2$
exceptional items of weight $(r+n)/r$. Let $U$ and $V$ count disjoint
ordinary blocks of sizes $n-2$ and $n$. Expanding the coefficient
generating functions, including the two distinguished blocks, gives
\begin{equation}\label{eq:auxiliary-ratio}
 \frac AT=\mathbb E\frac{r+V}{r+1+U}.
\end{equation}
For example the factor for the $U$ block after division by $r+1+U$
is $\sum_{j=0}^{n-2}\binom{n-2}{j}x^j/(r+1+j)$; multiplication by
$r+V$ on the other block gives $g(x)$, leaving $g(x)^{d-2}$ from the
remaining ordinary and exceptional blocks. The normalizing polynomial
is the one defining $T$. Hence
\begin{equation}\label{eq:sign-reduction}
 \alpha>\beta
 \quad\Longleftrightarrow\quad
 A\{(n+r)Q+(n-1)Z\}>nZT.
\end{equation}

\subsection{Two replicas in any dimension}\label{app:two-replicas}
Here $n=2$. We first record the coefficient comparison needed for this
case. For $a\ge1$, $D\ge1$, define
\[
 b_j=[x^j](1+x)^{D-1}(1+ax)^D,\qquad
 c_j=[x^j](1+x)^D(1+ax)^{D+1}.
\]
Then, for $0\le k\le D$,
\begin{equation}\label{eq:reciprocal-coeff}
 b_kc_k\ge b_{k-1}c_{k+1}.
\end{equation}
Here and below coefficients outside support are zero. We include the
argument and its Schur-function foundation explicitly. At a positive
alphabet, the two sequences
\[
 A_j=e_j^2-e_{j-1}e_{j+1}=s_{(2^j)},\qquad
 B_j=e_je_{j-1}-e_{j-2}e_{j+1}=s_{(2^{j-1},1)}
\]
are log-concave in $j$. This is the specialization of the
rounded-partition Schur log-concavity theorem
\citep[Theorem~11]{lam2007schur}, applied to
$(j-1,j-1),(j+1,j+1)$ and $(j-1,j-2),(j+1,j)$, followed by the Schur
involution and the dual Jacobi--Trudi identity. The second assertion is
used for $j\ge2$, with $B_0=0$.

Put $q=\sqrt a$, $L=D-1$, and let $e_j$ be the coefficients of
$(1+qx)^L(1+q^{-1}x)^L$. This reciprocal alphabet gives
$e_j=e_{2L-j}$, $A_j=A_{2L-j}$, and $B_j=B_{2L+1-j}$.
The positive parts of both sequences are therefore nondecreasing up
to their midpoint, by log-concavity and symmetry. Set
$v_j=e_j+qe_{j-1}$ and $\Delta_j(v)=v_j^2-v_{j-1}v_{j+1}$.
Expansion gives
\[
 \Delta_j(v)=A_j+qB_j+q^2A_{j-1}.
\]
Thus $\Delta_j(v)\ge\Delta_{j-1}(v)$ for $1\le j\le L$; at $j=L+1$
the difference is $(q^2-1)(A_L-A_{L-1})\ge0$.
The case $L=0$ uses $A_0=1,A_{-1}=B_0=B_1=0$.
Since $b_j=a^{j/2}v_j$, this gives
$\Delta_k(b)\ge a\Delta_{k-1}(b)$. Finally,
$c_j=b_j+(1+a)b_{j-1}+ab_{j-2}$ implies
$b_kc_k-b_{k-1}c_{k+1}=\Delta_k(b)-a\Delta_{k-1}(b)$.
The case $k=0$ is immediate, completing \eqref{eq:reciprocal-coeff}.

For the application define
$E_j(M,D)=[x^j](1+x)^M(1+ax)^D$, and put
\[
 P_*=E_{s-2}(d-1,d-2),\quad T_*=E_{s-1}(d,d-2),\quad
 U_*=E_{s-1}(d,d-1),\quad Q_*=E_s(d+1,d-1).
\]
These numbers are positive. Coefficient complementation says
$E_j(M,D)=a^{j-M}E_{M+D-j}(D,M)$. With
$t=2d-s$, $k=t-1$, $D=d-1$, and $\ell=d-1-t$, it gives
\[
 P_*=a^\ell b_k,\quad T_*=a^\ell(b_k+ab_{k-1}),\quad
 U_*=a^\ell c_{k+1},\quad Q_*=a^\ell(c_{k+1}+ac_k).
\]
Consequently $P_*Q_*\ge U_*T_*$ by \eqref{eq:reciprocal-coeff}.
Now take $a=(r+2)/r>1$. The deletion quantities obey
\[
 Z=r^d[Q_*+(a-1)U_*],\quad Q=r^{d-1}Q_*,
 \quad T=r^{d-2}T_*,\quad
 A=\frac{r^{d-1}[T_*+(a-1)P_*]}{r+1}.
\]
The difference in \eqref{eq:sign-reduction} is
\[
 \frac{2r^{2d-2}(a-1)}{a+1}
 \{a(P_*Q_*-U_*T_*)+P_*Q_*+(a-1)P_*U_*\}>0.
\]
This proves the strict sign for every $d\le s<2d$.

\subsection{Two and three dimensions}\label{app:small-dimensions}
We use a simple reciprocal bound. If $0\le U\le L$,
$\mu=\mathbb EU<L$, $v=\operatorname{Var}U$, and $a>0$, put
$u_0=\mu-v/(L-\mu)$. Tilting the law by
$(L-U)/(L-\mu)$ makes its mean $u_0\in[0,L]$.
The identity
\[
 \frac1{a+U}=\frac1{a+L}
       +\frac{L-U}{(a+L)(a+U)}
\]
and Jensen's inequality under this tilted law imply
\begin{equation}\label{eq:tilted-reciprocal}
 \mathbb E\frac1{a+U}\ge
 \frac1{a+L}+\frac{L-\mu}{(a+L)(a+u_0)}.
\end{equation}

For $d=2$, divide the deletion normalizers by $\binom{2n}{s}$ and write
\[
 z=r^2+rs+\frac{ns(s-1)}{2(2n-1)},\quad q=r+s/2,\quad
 t=\frac{s(2n-s)}{2n(2n-1)},\quad h=(n+r)q+(n-1)z.
\]
The auxiliary $U$ is hypergeometric with population $2n-2$, marked
count $n-2$, and sample size $s-1$, and $V=s-1-U$. Its mean and
variance are
\[
 \mu=\frac{(n-2)(s-1)}{2n-2},\qquad
 v=\frac{(s-1)(n-2)n(2n-1-s)}{(2n-2)^2(2n-3)}.
\]
Use $L=s-1$ in \eqref{eq:tilted-reciprocal}. Then
$u_0=(n-2)(s-2)/(2n-3)$ and
\[
 \mathbb E\frac1{r+1+U}\ge
 L_0:=\frac1{r+s}+
 \frac{n(s-1)}{2(n-1)(r+s)(r+1+u_0)}.
\]
Put $R_0=(2r+s)L_0-1$. By \eqref{eq:auxiliary-ratio},
$A/T\ge R_0$ and
$\alpha-\beta=t\{(A/T)h-nz\}/[z(n+r)q]$.
Set $u=n-2$, $w=s-2$, and
$\mathcal D=4(n-1)(2n-1)(r+s)[(2n-3)r+(n-2)s+1]$.
Direct expansion gives $\mathcal D(R_0h-nz)=\sum_{i=0}^3C_ir^i$, where
\begin{align*}
 C_0&=n^2s^2(s-1)[2u^2+5u+2+(u+1)w],\\
 C_1&=ns[4u^3+30u^2+50u+18+
             (8u^3+50u^2+82u+33)w+(6u^2+16u+9)w^2],\\
 C_2&=2[12u^3+58u^2+78u+26+
             (4u^4+40u^3+129u^2+152u+51)w+
             (6u^3+29u^2+41u+16)w^2],\\
 C_3&=2(2n-3)(2n-1)[2+(u+3)w].
\end{align*}
All are positive for $n,s\ge2$. All cleared factors are positive and
$t>0$ for $s<2n$, proving the sign, including the deterministic
auxiliary endpoint.

For $d=3$, put $M=3(n-1)$. Conditional on $k$ selected ordinary items
in the auxiliary experiment, $U$ is hypergeometric on the ordinary
population. Its tilted mean is $(n-2)(k-1)/(M-1)$. Thus for $k\ge1$,
\begin{align*}
 L_k&=\frac1{r+1+k}+
 \frac{k(M-n+2)}{M(r+1+k)[r+1+(n-2)(k-1)/(M-1)]},\\
 \ell_k&=\frac{[r(M+2)+n(k+1)]L_k-n}{M-n+2}
 \le\mathbb E\left[\frac{r+V}{r+1+U}\,\middle|\,k\right].
\end{align*}
The second formula follows from
$\mathbb E[V\mid U,k]=n(k-U)/(M-n+2)$.
Divide $Z,Q,T$ by $\binom{3n}{s}$ and write
\begin{align*}
 z&=r^3+sr^2+\frac{ns(s-1)r}{3n-1}
             +\frac{n^2s(s-1)(s-2)}{3(3n-1)(3n-2)},\\
 q&=r^2+2sr/3+\frac{ns(s-1)}{3(3n-1)},\\
 t_0&=\frac{s(3n-s)}{3n(3n-1)},\qquad
 t_1=\frac{s(s-1)(3n-s)}{3n(3n-1)(3n-2)},\\
 t&=rt_0+nt_1,\qquad h=(n+r)q+(n-1)z,\\
 a&=t_1\left[(r+n)\ell_{s-2}
       +\frac{r(3n-s-1)}{s-1}\ell_{s-1}\right].
\end{align*}
There is one exceptional item. Its two possible selection states have
unnormalized masses $(r+n)\binom M{s-2}$ and $r\binom M{s-1}$.
The conditional bounds therefore give $A/\binom{3n}{s}\ge a$.
At $s=3n-1$, the second mass is zero and its impossible conditional
component is omitted.

Define
\begin{align*}
 \mathcal D={}&27n(n-1)(3n-2)^2(3n-1)^2(r+s)(r+s-1)\\
 &\quad\cdot[(3n-4)r+(n-2)s+n][(3n-4)r+(n-2)s+2],\\
 P(n,s,r)&=\frac{\mathcal D(ah-nzt)}{s(3n-s)}.
\end{align*}
Exact rational cancellation makes $P$ an integer polynomial.
Expansion of $P(2+u,3+w,r)$ has 342 nonzero coefficients, all positive,
and constant 9216. This is the small-dimension polynomial certificate.
It is reconstructed from the displayed rational formulas by clearing
denominators, exact division, substitution, and coefficient collection;
no sampled values of $n,s,r$ enter the check. The public
\path{anc/balanced/three_coordinate_certificate.py} checker verifies the full
formal identity and every coefficient.
For additional fingerprints,
\begin{align*}
 [r^0]P&=n^4s^2(s-2)(s-1)^3(ns+n-2s)
                   (3n^2+2ns-10n-2s+6),\\
 [r^7]P&=3(n-1)(3n-4)^2(3n-2)(3n-1)(3ns-6n+2s).
\end{align*}
These two coefficients do not replace the complete coefficient check.
All factors in $\mathcal D$ and $s(3n-s)$ are positive in
$3\le s<3n$. Hence $ah-nzt>0$, and
\[
 \alpha-\beta\ge\frac{ah-nzt}{z(n+r)q}>0.
\]

\subsection{The general domain: a variance-root split}\label{app:general-sign}
It remains to prove the sign for $n\ge3,d\ge4$.
First let $d\le s\le nd-2$ and put
\[
 M=d(n-1),\quad D=d-2,\quad N=M+D,\quad
 u=s-d+1,\quad L=M-u.
\]
In the auxiliary experiment, let $H$ count unselected exceptional
items. Its mass is proportional to
\[
 \binom Dk\binom M{u+k}r^k(r+n)^{D-k}.
\]
The selected ordinary count is $u+H$. Put $h=\mathbb EH$.
Detailed balance between adjacent masses gives
\[
 rDL=rNh+nuh+n\mathbb E H^2.
\]
Distinct exceptional inclusion indicators have nonpositive covariance.
Indeed, deleting the two items leaves three elementary symmetric
coefficients $a,b,c$; their covariance is a positive multiple of
$ac-b^2\le0$, by Newton's inequality. Complements have the same
covariance. Therefore
$\operatorname{Var}H\le h(1-h/D)$, and
\begin{equation}\label{eq:variance-root}
 q(h):=n(D-1)h^2+D[rN+n(u+1)]h-rD^2L\ge0.
\end{equation}
Here $0<h<D$ and $L>0$. The polynomial $q$ is strictly increasing on
the nonnegative line and has a unique positive root $\eta$, with
$h\ge\eta>0$. Since
$q(D)=D^2[n(D+u)+r(N-L)]>0$, also $\eta<D$.

The following polynomials use $h$ as a formal variable:
\begin{align*}
 Q_n&=rm(m-1)+ns(M+1)+nmh,\qquad m=nd,\quad s=u+d-1,\\
 Z_n&=rQ_n+ns[r(m-1)+n(u+h)],\\
 H_n&=(n+r)Q_n+(n-1)Z_n,\\
 B&=Mr+n(u+h),\\
 C&=Mr(s-1)+(u+h)[n(u-1)-r(d-1)].
\end{align*}
At the actual mean they satisfy
\begin{equation}\label{eq:actual-mean-identities}
 \frac QT=\frac{Q_n}{s(L+1)},\quad
 \frac ZT=\frac{Z_n}{s(L+1)},\quad
 \mathbb E(r+V)=\frac BM,\quad
 \mathbb E[(r+V)U]=\frac{(n-2)C}{M(M-1)}.
\end{equation}
Here is a derivation that keeps the second moment explicit. If $e_j$
are the elementary symmetric coefficients of the auxiliary weights,
counting selected inverse weights and unselected weights gives
\[
 \frac{e_s}{e_{s-1}}=\frac{rL+nh}{rs},\qquad
 \frac{e_{s-2}}{e_{s-1}}=
       \frac{r(s-1)+n(u+h)}{(r+n)(L+1)}.
\]
Multiplication by the two missing linear factors gives $Q/T$ and $Z/T$.
Conditional ordinary factorial moments give
\[
 \mathbb E[(r+V)U]
 =\frac{n-2}{M(M-1)}
 \{r(M-1)(u+h)+n\mathbb E[(u+H)(u+H-1)]\}.
\]
The drift identity above reduces the braces to $C$.
This proves \eqref{eq:actual-mean-identities} without replacing a
random variable by its mean inside a reciprocal.

Let $\theta=nZ_n/H_n$. The sign condition
\eqref{eq:sign-reduction} is now $A/T>\theta$.
For integer $U\ge0$,
\[
 \frac1{r+1+U}-\frac{r+2-U}{(r+1)(r+2)}
 =\frac{U(U-1)}{(r+1+U)(r+1)(r+2)}\ge0.
\]
Multiplication by $r+V$ gives the lattice bound below.
Weighted Cauchy gives the other bound:
\begin{align*}
 \frac AT&\ge L_1=
 \frac{(r+2)(M-1)B-(n-2)C}{M(M-1)(r+1)(r+2)},\\
 \frac AT&\ge L_C=
 \frac{(M-1)B^2}{M[(r+1)(M-1)B+(n-2)C]}.
\end{align*}
At the actual mean the latter denominator is positive because it is a
positive multiple of $\mathbb E[(r+V)(r+1+U)]$.
Define polynomial residuals
\begin{align}
 P(h)&=[(r+2)(M-1)B-(n-2)C]H_n
       -n(r+1)(r+2)M(M-1)Z_n,\label{eq:lattice-polynomial}\\
 S(h)&=(M-1)B^2H_n
       -nMZ_n[(r+1)(M-1)B+(n-2)C].\label{eq:cauchy-polynomial}
\end{align}
At the actual mean, $P(h)>0$ or $S(h)>0$ proves the desired sign.

Put $t=\eta/D$, $z=u+\eta$, and $\delta=M(1-t)-z$. Substitution into
$q(\eta)=0$ gives
\begin{equation}\label{eq:root-map}
 r=\frac{nt(z+1-t)}{\delta},\qquad
 1<z<M,\quad 0<t\le\frac{z-1}{D},\quad t<1-\frac zM.
\end{equation}
Indeed $0<t<1$, $z>1$, and the numerator is positive, so $\delta>0$.
The two upper bounds meet at $z_c=M(d-1)/N$, and
$1<z_c<d<M$ because
$z_c-1=D(M-1)/N$ and $d-z_c=d(n+d-3)/N$.

\subsection{Finite polynomial certificate and full domain coverage}
The exact certificate proves four statements: $P$ increases strictly
on $h\ge0$ when $1\le u\le d$; $S$ increases strictly at fixed $u$
on $0\le h<D,\ u+h\ge d$; $P(\eta)>0$ when $z<d$; and
$S(\eta)>0$ when $z\ge d$. The following formulas specify the entire
certificate and make it independently reconstructible.

First substitute $h=Dt$, $u=z-Dt$, and the value of $r$ in
\eqref{eq:root-map}, and define
\[
 P^*(n,d,z,t)=\delta^3P(Dt)/n^3,\qquad
 S^*(n,d,z,t)=\delta^3S(Dt)/n^4.
\]
Exact expansion verifies both divisions and gives integer polynomials
of total degree seven in $(z,t)$. Their individual $z,t$ degrees are
also seven. For the following three rational cells write
$z=z_n/z_d$, $t=t_n/t_d$, with $v,w\ge0$:
\[
\resizebox{\textwidth}{!}{$
\setlength{\arraycolsep}{3pt}
\begin{array}{c|c|c|c|c|c}
 &z_n&z_d&t_n&t_d&\text{kernel}\\ \hline
 \text{Low}&N(1+w)+D(M-1)w&N(1+w)&(M-1)wv&
 N(1+w)(1+v)&P^*\\
 \text{Upper}&M(d-1)+dNw&N(1+w)&
 [MN(1+w)-M(d-1)-dNw]v&MN(1+w)(1+v)&P^*\\
 \text{High}&d[1+(n-1)w]&1+w&(n-2)v&
 (n-1)(1+w)(1+v)&S^*
\end{array}
$}
\]
Every denominator is positive. In each cell expand
$t_d^7\operatorname{kernel}(n,d,z_n/z_d,t_n/t_d)$ and then put
$n=3+x,d=4+y$. To do this without rational arithmetic, write
$t_d=z_d\ell$. Each kernel monomial $a_{ij}(n,d)z^it^j$ contributes
\[
 a_{ij}(n,d)(z_n\ell)^it_n^jt_d^{7-i-j};
\]
all exponents are nonnegative by the total-degree check.
The complete coefficient checks are summarized here.
\[
\resizebox{\textwidth}{!}{$
\setlength{\arraycolsep}{2.5pt}
\begin{array}{l|l|r|r}
 \text{polynomial}&\text{nonnegative variables}&
 \text{nonzero coefficients}&\text{constant}\\ \hline
 {[h]P},\ d=u+v&(n-3,u-1,v,r)&234&432\\
 {[h^2]P},\ d=u+v&(n-3,u-1,v,r)&108&270\\
 (1+v)^2\partial_h S,\ u=d+z-h,\ h=Dv/(1+v)
       &(n-3,d-4,r,z,v)&840&412776\\
 \text{Low cell}&(n-3,d-4,v,w)&12144&576240000000\\
 \text{Upper cell}&(n-3,d-4,v,w)&27907&2953583794675777536\\
 \text{High cell}&(n-3,d-4,v,w)&5256&35782656
\end{array}
$}
\]
The first two rows prove monotonicity of $P$, and the third proves
monotonicity of $S$. The Low and Upper cells prove the root sign of $P$;
the High cell proves the root sign of $S$.
Every nonzero coefficient is a positive integer. For the third row,
differentiate at fixed $u,r$ before substituting. The first two rows
use $s=u+d-1$ throughout. The checker expands the defining polynomials,
compares the entire coefficient dictionaries (including absent
monomials), and verifies exact division by the stated factors.
Thus the 46,489 entries certify polynomial identities and positivity
on unbounded nonnegative orthants. Testing the signs of supplied
entries without reconstructing the polynomials would not suffice.

For clarity, the rational cells cover the whole root domain.
For $1<z<z_c$ with $t<(z-1)/D$, their inverse coordinates are
\[
 w=\frac{z-1}{z_c-z},\qquad
 v=\frac{t}{(z-1)/D-t}.
\]
For $z_c\le z<d$, use
$w=(z-z_c)/(d-z)$ and $v=t/(1-z/M-t)$.
For $d\le z<M$, use
$w=(z-d)/(M-z)$ and the same expression for $v$.
The seam $z=z_c$ belongs to Upper at $w=0$, and $z=d$ belongs to
High at $w=0$. No infinite-$w$ argument is needed.

The remaining face is $t=(z-1)/D$, or $u=1$. It necessarily has
$z<z_c$ and is the Low cell's $v\to\infty$ face.
The Low polynomial has degree seven in $v$. Its coefficient of $v^7$
has 1,504 positive coefficients in $(n-3,d-4,w)$ and constant
576240000000. Dividing by $t_d^7$ and taking this limit preserves a
strictly positive value. The other upper face $t=1-z/M$ is impossible
for finite $r>0$ because $\delta>0$.

We can now transport the residual signs. If $u+\eta<d$, the root
certificate gives $P(\eta)>0$, and $u\le d$ makes $P$ increasing on
$[\eta,h]$. Hence $P(h)>0$. If $u+\eta\ge d$, every $x\in[\eta,h]$
has $0\le x<D$ and $u+x\ge d$. The third coefficient row makes $S$
increasing there, so $S(h)\ge S(\eta)>0$. Only now do we divide by the
positive actual-mean Cauchy denominator. Both cases prove
$\alpha>\beta$.

At the remaining endpoint $s=nd-1$, the omitted row is uniform, so
$K_1=n-1$ with probability $1/d$ and otherwise $K_1=n$.
Direct substitution into the coefficient definitions gives
\[
 \alpha=\frac1{dn(r+n-1)^2},\qquad
 \frac{\beta}{\alpha}
 =\frac{dr(r+n-1)}{(r+n)[d(r+n-1)+1]}<1.
\]
This deals with the endpoint without a limiting root argument.

\subsection{Certificate status and geometric conclusion}
The public files \path{anc/balanced/sign_certificate.py} and
\path{anc/balanced/coefficient_certificate.json} reconstruct and check all six
general-domain polynomial identities using exact integer arithmetic.
The separate \path{anc/balanced/three_coordinate_certificate.py} checker reconstructs the 342-coefficient
identity above. Both checks are finite algebraic proofs of parameter
identities; original-row or count enumerations in the checkers are
additional finite corroboration. The formulas in this appendix specify
what those programs verify, including all boundary cases and signs of
removed factors.

The union of the two-replica, two- and three-dimensional, and general
arguments is exactly $n,d\ge2$, $d\le s<nd$, $r>0$.
At each calibrated $r_s$, the same target gives the common $b$ and
$J_0$ of \eqref{eq:contrasts}. The deficit
\eqref{eq:sharp-deficit} now proves the value and complete equality
space in Theorem~\ref{thm:sharp-geometry}. Indeed $\beta\ge0$ and
$\alpha c-\beta W_{jj}\ge c(\alpha-\beta)>0$.
Nonzero contrasts in a maximizing group exist because $n\ge2$.
This also shows that no further equality vectors appear for singular
queries, nonzero off-diagonal entries, or tied maximal diagonals.

\subsection{Short algebra for the quantitative margin}\label{app:margin-algebra}
Use the notation of Proposition~\ref{prop:small-penalty-gap}. Put
$E=d(n^2-2)-2n+2>0$; indeed $E\ge2(n^2-n-1)>0$.
Direct expansion of its displayed quadratic $P(h)$ gives
\begin{align*}
 p_2&=dn^2(nr+2n-1)[2n(M-1)+rE],\\
 p_1&=dn[2n^2(3n-1)(M-1)+nrB_1+r^2A_1+nr^3(nd-1)E].
\end{align*}
For $a=n-2\ge0$ and $e=d-2\ge0$, the two remaining polynomials are
\begin{align*}
 B_1={}&2e^2(a+1)(2a^2+8a+7)
       +e(23a^3+103a^2+142a+58)\\
      &+2(15a^3+59a^2+71a+22),\\
 A_1={}&e^2n(2a+3)(2a^2+8a+5)
       +e(18a^4+122a^3+290a^2+281a+88)\\
      &+2(10a^4+61a^3+131a^2+113a+29).
\end{align*}
Every displayed factor is nonnegative and both constant terms are
positive, so $p_1,p_2>0$. The constant term is
\[
 P(0)=dn(M-1)[2n^3-nC_1r-C_2r^2-(nd-1)C_3r^3],
\]
where
\begin{align*}
 C_1={}&3d^2n^2-5d^2n+2d^2-6dn^2+5dn-2d-2n^2+4n-2,\\
 C_2={}&4d^2n^3-7d^2n^2+2d^2n-8dn^3+9dn^2-3dn+4n^2-4n+2,\\
 C_3={}&dn^2-dn-d-2n^2+2n.
\end{align*}
We have $C_1\le3d^2n^2$, since the difference is
$d^2(5n-2)+d(6n^2-5n+2)+2(n-1)^2>0$.
Also $C_2\le4d^2n^3$: the difference is
\[
 d^2(7n^2-2n)+d(8n^3-9n^2+3n)-4n^2+4n-2>0.
\]
Its first and last terms together are at least $24n^2-4n-2>0$,
and its middle term is positive. Finally $C_3\le dn^2$ implies
$(nd-1)C_3\le d^2n^3$. No lower bound on the $C_i$ is needed.
These estimates prove the bound on $P(0)$ used in the proposition.
All identities are short polynomial expansions; this margin proof does
not use the large all-parameter sign certificate.

\section{Frame examples and exact coefficient algebra}\label{app:frames}
\subsection{Query class, a singular example, and the second budget}\label{app:etf-witness}
The row projectors $x_ix_i^\top$ have Frobenius Gram matrix
$(1-q)I+q\mathbf1\mathbf1^\top\succ0$. Their linear independence is a
classical ETF fact \citep[Section~5.1]{fickus2018}. Let $\mathcal Q_0$
be the orthogonal complement of their span in the symmetric matrices.
It has dimension $d(d+1)/2-m$. Since
$\sum_i x_ix_i^\top=gI$, every $V\in\mathcal Q_0$ has trace zero.
The symmetric flat-energy query class is exactly
\begin{equation}
 \{wI+V:w\in\mathbb R,\ V\in\mathcal Q_0\}.
\end{equation}
If $m<d(d+1)/2$, small nonzero multiples of any nonzero $V$ give
nonisotropic positive definite queries $I+\epsilon V$. When $m=d(d+1)/2$,
only scalar queries have flat row energy. Thus the $(6,3)$ ETF gives no larger query
class, whereas an existing $(16,6)$ ETF has five traceless directions.
This dimension count is a consequence of the classical projector fact,
not a new independence theorem.

Here is an explicit $(16,6)$ example. Index the features by the edges
$12,13,14,23,24,34$ of the complete graph on four vertices. At each
vertex place the four sign triples
\[
 (1,1,1),\quad(1,-1,-1),\quad(-1,1,-1),\quad(-1,-1,1)
\]
on its incident edges in the listed order, with zeros elsewhere. Stack the rows to form $Z$, and
put $X=Z/\sqrt3$. Then $Z^\top Z=8I$, each row has squared norm three,
and distinct row inner products have squared value one. This verifies
that $X$ is a unit-row ETF with $g=8/3$ and $q=1/9$. Fix
\begin{equation}\label{eq:etf-singular-query}
 W=\begin{pmatrix}
 1&0&0&0&0&1\\0&2&0&0&2&0\\0&0&3&3&0&0\\
 0&0&3&3&0&0\\0&2&0&0&2&0\\1&0&0&0&0&1
 \end{pmatrix}.
\end{equation}
Its eigenvalues are $0,0,0,2,4,6$. Off-diagonal couplings join disjoint
edges, which never share a row support; the diagonal weights sum to
six on every vertex star. Thus $x_i^\top Wx_i=2=w$ for every row.
This is a singular nonisotropic query in the physical coordinates.

For the prescribed target $\lambda_0=1$, the calibrated raw penalty is
the unique positive root $r_\star\in(4/5,9/10)$ of
\begin{equation}\label{eq:etf-witness-root}
 216r^3+603r^2+32r-576=0.
\end{equation}
Indeed \eqref{eq:etf-calibration} yields
\begin{equation}
 f(r)-\frac3{11}=
 -\frac{216r^3+603r^2+32r-576}{88(r+2)(3r+4)(3r+8)}.
\end{equation}
The root and interval are exact; no rounded penalty defines this model.
The sharp value is $2c_R(r_\star)$, where
\begin{equation}
 c_R(r)=\frac{21r^2+72r+64}{20(r+2)^2(3r+4)^2},
\end{equation}
and all ten residual dimensions maximize.

More generally, keep the ETF, flat physical query, and prescribed target
fixed, and consider LOO with its separately calibrated raw penalty
$r_1I$. Single deletion has equal determinant masses and scalar mean
coefficient
\begin{equation}\label{eq:etf-loo-calibration}
 f_1(r_1)=\frac1m\left\{\frac{(d-1)g}{g+r_1}
                          +\frac{g-1}{g-1+r_1}\right\}.
\end{equation}
It decreases strictly from $1/g$ to zero, so it reaches the same target
uniquely. Put $b_1=(g+r_1)^{-1}$ and $a_1=1-b_1$. The LOO leverage
is $b_1$ and $x_i^\top(b_1I)W(b_1I)x_i=b_1^2w$. Therefore all LOO
scores equal $wb_1^2/(ma_1^2)$, and
Theorem~\ref{thm:loo-envelope} gives
\begin{equation}
 h_{m-1}(X,W)=\frac{w}{m(g+r_1-1)^2},\qquad
 \text{complete maximizing space}=\ker(X^\top).
\end{equation}
This proves persistence of the full space at these two budgets only.
In the explicit example $r_1$ solves $48r_1^2+43r_1-80=0$, so it differs
from $r_\star$. No monotonicity of values, other-budget conclusion,
ETF design optimality, or classification of nonflat queries is asserted.

\subsection{Reconstruction of the three-deletion gap}\label{app:triple-certificate}
This appendix proves the strict sign used in Theorem~\ref{thm:etf-triple}.
Its coefficient certificate is finite, but it covers the unbounded
parameter domain. No grid of penalties or dimensions is used.

Put $m=d+k$ and introduce the following integer polynomials:
\begin{align*}
 v&=dr+k,\qquad \mathcal Z=dr+m,\\
 \mathcal Q&=(m-1)v^2-dk,\qquad
 \mathcal E=(m-1)v^2-4dk,\\
 \mathcal F&=(m-1)(m-2)v^3-3dk(m-2)v-2dk(k-d),\\
 \mathcal U&=(d-3)\mathcal F+3(m-2)\mathcal Z\mathcal Q,\\
 \mathcal R&=(m-2)v\mathcal Q+6dk(k-d),\\
 \mathcal V&=(d-3)\mathcal E\mathcal F
              +3(m-1)\mathcal Z^2\mathcal R.
\end{align*}
Direct substitution in \eqref{eq:triple-moments} gives
\begin{equation}\label{eq:triple-integer-moments}
 \begin{split}
 \mathcal E&=d^2(m-1)(h^2-4q)>0,\qquad
 \mathcal F=d^3(m-1)(m-2)D>0,\\
 H_1&=\frac{d\mathcal U}{\mathcal Z\mathcal F},\qquad
 H_2=\frac{d^2\mathcal V}{\mathcal Z^2\mathcal E\mathcal F}.
 \end{split}
\end{equation}
Define $\mathcal H$ by exact polynomial division:
\begin{equation}\label{eq:triple-gap-polynomial}
 3d^2k\mathcal H
 =\mathcal E(\mathcal Z+mr)\mathcal U^2
  -d\mathcal Z\mathcal E\mathcal F\mathcal U
  -rd\mathcal V(\mathcal U+k\mathcal F).
\end{equation}
Here $k\mathcal F$ is an ordinary product of the codimension and a
polynomial; it is different from the fitted-loss coefficient $k_F$.
The right side is divisible by $3d^2k$ in the integer polynomial ring,
and the quotient has degree seven in $r$.

To verify its statistical role, set
\[
 \mathcal L=\mathcal Z\mathcal F-r\mathcal U,\qquad
 \mathcal B=\mathcal Z^2\mathcal E\mathcal F
       -2r\mathcal Z\mathcal E\mathcal U+r^2d\mathcal V.
\]
Equations~\eqref{eq:triple-sectors} and
\eqref{eq:triple-integer-moments} imply
\begin{align*}
 M&=\frac{d\mathcal L}{\mathcal Z\mathcal F},\quad
 B=\frac{d\mathcal B}{\mathcal Z^2\mathcal E\mathcal F},\quad
 k_F=\frac{r\mathcal U}{\mathcal Z\mathcal F},\\
 c_R&=\frac{d[m\mathcal Z\mathcal E\mathcal U-mrd\mathcal V-d\mathcal B]}
              {mk\mathcal Z^2\mathcal E\mathcal F},\\
 c_F&=\frac{d[\mathcal B\mathcal F-\mathcal E\mathcal L^2]}
              {m\mathcal Z^2\mathcal E\mathcal F^2}.
\end{align*}
On the common denominator $mk\mathcal Z^3\mathcal E\mathcal F^2$,
the gap numerator is $d$ times
\[
 r\mathcal U[m\mathcal Z\mathcal E\mathcal U-mrd\mathcal V-d\mathcal B]
 -k\mathcal Z[\mathcal B\mathcal F-\mathcal E\mathcal L^2].
\]
Using $\mathcal Z=dr+m$ and $m=d+k$, this bracket is exactly $r\mathcal Z$
times the right side of \eqref{eq:triple-gap-polynomial}. Therefore
\begin{equation}\label{eq:triple-gap-cleared}
 c_Rk_F-c_F=\frac{3d^3r\mathcal H}
                 {m\mathcal Z^2\mathcal E\mathcal F^2}.
\end{equation}
All factors removed from the denominator are strictly positive on the
actual-frame domain proved in the main text.

For $d\ge3,k\ge4$, substitute $d=u+3$, $k=v_0+4$. The complete expansion
of $\mathcal H(r,u+3,v_0+4)$ has 581 nonzero coefficients, all positive
integers. The exact coefficient data are in
\path{anc/etf_triple/gap_coefficients.json}. Each entry is
$[i,j,l,c_{ijl}]$ for $c_{ijl}r^iu^jv_0^l$. The following table provides
checks on the expansion; the full artifact supplies every coefficient.
\begin{center}
\begin{tabular}{rrrr}
\toprule
Power of $r$ & Nonzero coefficients & Minimum & Constant in $(u,v_0)$\\
\midrule
0&69&1&145212480\\
1&75&6&678003984\\
2&78&15&1273128192\\
3&79&20&1253473704\\
4&78&15&702853200\\
5&75&6&225144360\\
6&70&1&38141280\\
7&57&1&2624400\\
\bottomrule
\end{tabular}
\end{center}
This proves $\mathcal H>0$ for $r>0,u,v_0\ge0$, including the shifted
boundary. The exact reconstruction has four steps: form the displayed
integer polynomials; perform the division in
\eqref{eq:triple-gap-polynomial}; substitute $d=u+3,k=v_0+4$; and collect
all monomials. The public program
\path{anc/etf_triple/check.py} performs these steps, verifies
\eqref{eq:triple-integer-moments} and \eqref{eq:triple-gap-cleared}, and
compares the whole coefficient list, including its support, with the
artifact. Merely testing signs of stored numbers would not prove the
identity. The program also derives the local energy and weighted moments
and enumerates all 560 original-row triples of an actual $(16,6)$ ETF.
Those finite model checks support the derivation; the polynomial identity
proves its uniform sign.

The only remaining actual case is $d=k=3$. Substitution in the compact
moment formulas gives
\[
 c_Rk_F-c_F=
 \frac{r(5r^2+10r+4)(5r^3+25r^2+30r+6)}
 {2(r+1)^2(5r^2+10r+1)(5r^2+10r+2)^2}>0.
\]
The checker verifies this identity separately. The domain argument in the
main proof excludes every other actual $k=3$ pair. Thus these two
calculations cover every existing ETF in Theorem~\ref{thm:etf-triple}.

\section{Mixture coefficient evaluation and examples}\label{app:mixture}
\subsection{Evaluating the coefficients}
Four scalar count moments suffice given exact calibration:
\[
 U=\mathbb EK_1^2,\quad C=\mathbb E[K_1f(K_1)],\quad
 F=\mathbb Ef(K_1)^2,\quad P=\mathbb E[f(K_1)f(K_2)].
\]
The fixed total count gives
$C_\times=(sa-C)/(d-1)=\mathbb E[f(K_1)K_2]$ and
$U_\times=(s\mu-U)/(d-1)=\mathbb E[K_1K_2]$.
Set
\[
 L=\frac{na-C}{n(n-1)},\qquad N=\frac{n\mu-U}{n(n-1)}.
\]
Then all required coefficients are
\begin{align*}
 A&=\frac{na-(n+r)F}{rn(n-1)},\qquad
 D=2(A-c_HL),\qquad Q=A-2c_HL+c_H^2N,\\
 B_0&=(F-P)/b,\\
 B_1&=2[c_H(C-C_\times)-(F-P)]/b,\\
 B_2&=[c_H^2(U-U_\times)-2c_H(C-C_\times)+(F-P)]/b,\\
 \tau(t)&=\{a^2-[(1-t)^2P+2t(1-t)c_HC_\times+
                                    t^2c_H^2U_\times]\}/b.
\end{align*}
These follow by expanding \eqref{eq:mixture-coeff}; for the first identity
use $nf(k)-(n+r)f(k)^2=rk(n-k)/(r+k)^2$.
For numerical evaluation, the direct nonnegative sums can avoid
subtractive cancellation.

The generating polynomial $g(x)$ in Appendix~\ref{app:geometry} gives
marginal weights $a_k[x^{s-k}]g(x)^{d-1}/Z$ and pair weights
$a_ka_\ell[x^{s-k-\ell}]g(x)^{d-2}/Z$.
Repeated convolution truncated at degree $s$, followed by these sums,
takes $O(dns+n^2)$ arithmetic operations and $O(s+n)$ working memory.
This count excludes calibration, subset generation, bit complexity,
and numerical stability. An approximately supplied penalty does not
inherit the exact mean statement proved here.

\subsection{A concrete improvement and its adverse response}
Take $n=d=s=2$, $r=1$, so $a=5/11$,
$\lambda_0=12/5$, and $b=12/11$. Each mixed-group original subset
has probability $2/11$, and each full-single-group subset has
probability $3/22$. Direct evaluation gives
\[
 \alpha(t)=\frac{(11-t)^2}{1331},\qquad
 \beta(t)=\frac{(11+4t)^2}{2178}.
\]
Here
$D=2/121$, $Q=1/1331$, $B_0=1/18$, $B_1=4/99$, $B_2=8/1089$,
and \eqref{eq:mixture-step} gives $T_s=847/3116>1/4$.
At $t=1/4$, every query's sharp risk drops by the factor
$1849/1936$. Yet for the positive-copy response
$y=(1,1,-1,-1)$ and $W=(1,1)(1,1)^\top$, the normalized risk
increases from $1/18$ to $8/121$. This is a pure transverse mean
response. Thus a smaller supremum does not imply pointwise risk
dominance.

More generally, for $0\le t\le1$,
\[
 \sup_{0\ne W\succeq0}\sup_{y\ne0}
 \frac{\operatorname{tr}(W\operatorname{Cov}(w(t)))}
      {J_0(y)\max_jW_{jj}}
 =\max\{\alpha(t),\beta(t)\}.
\]
The upper bound follows from $\tau(t)\ge0$ and the two response blocks.
Contrasts attain $\alpha(t)$; a flat rank-one query
$W=vv^\top$, $v_j\in\{-1,1\}$, and nonzero means $q\perp v$
attain $\beta(t)$. Hence optimizing this scalar mixture family amounts
to minimizing the maximum of two convex quadratics. Its best weight
can differ from the conservative step above, and can introduce mean
maximizers. This does not optimize over all estimators or laws.

\section{Explicit recalibration and transfer bounds}\label{app:transfer}
This appendix first gives a perturbation bound for general full-rank
designs at every $d\le s<m$. It then uses the balanced base's complete
equality space. All constants concern a fixed target and query.

\subsection{Fixed target coordinates and explicit constants}
Put
\[
 Z=XA_0^{-1/2},\quad E=HA_0^{-1/2},\quad B=Z^\top Z,\quad
 \mathsf P_s=A_0^{-1/2}R_sA_0^{-1/2},\quad V=A_0^{-1/2}WA_0^{-1/2}.
\]
For a diagonal row selector $D_S$, define
\begin{align*}
 A_S&=Z^\top D_SZ+\mathsf P_s,&
 L_S&=A_S^{-1}Z^\top D_S,&L_0&=(I+B)^{-1}Z^\top,\\
 F_S&=L_S-L_0,&
 C_s&=\mathbb E[F_S^\top VF_S],&
 T&=(I+ZZ^\top)^{1/2}=K_X^{-1/2}.
\end{align*}
Thus the physical risk is $y^\top C_sy$ and $M_s=TC_sT$.
A prime on a whole object denotes its value at $Z+E$.

Let $z=\lVert Z\rVert$, $\sigma=\sigma_{\min}(Z)>0$, and choose
\begin{equation}\label{eq:radius-constants}
 r_0=\sigma/2,\quad \ell=\sigma^2/4,\quad
 v=z+r_0,\quad u=v^2,\quad t=\sqrt{1+v^2}.
\end{equation}
If $\epsilon=\lVert E\rVert_F\le r_0$, every $Z+xE$, $0\le x\le1$,
has Gram spectrum in $[\ell,u]$. For each $d\le s<m$, set
\begin{align}
 a_s&=\frac{s-d+1}{m-d+1},\qquad b_s=\frac sm,\qquad
 q_-=\frac{\ell}{b_s},\quad q_+=\frac{u}{a_s},\nonumber\\
 \kappa_s&=\frac{b_sq_-}{(1+q_-)(1+q_+)},\qquad
 \mathcal L_s=b_s\left[\frac1\ell+
                  \frac1{\kappa_s(1+\ell)^2}\right],\label{eq:calibration-modulus}\\
 p&=a_s,\quad L_X=2v\mathcal L_s,\quad
 B_F=\frac1{2\sqrt p}+\frac12,\nonumber\\
 C_D&=\frac5{4p}+\frac54+\frac{L_X}{2p^{3/2}},\qquad
 C_F=tC_D+B_Fv,\qquad
 L_p=\sqrt d\left(\frac2{\sqrt p}+\frac{L_X}p\right).\label{eq:composite-constants}
\end{align}
These are positive finite constants determined by $X,A_0,m,s$.
A convenient explicit operator bound is
\begin{equation}\label{eq:explicit-eta}
 \eta_s(\epsilon)=\lVert V\rVert
 \left[2B_FtC_F\epsilon+B_F^2t^2\tanh(L_p\epsilon/2)\right]
 \le \mathcal M_s\epsilon,
 \quad
 \mathcal M_s=\lVert V\rVert
 \left[2B_FtC_F+\tfrac12B_F^2t^2L_p\right].
\end{equation}
In Theorem~\ref{thm:transfer} take $r=r_0$ and $L_s=\mathcal M_s$.
These constants require no sum over original-row subsets.

\subsection{Curvature of calibration, including matrix directions}
Recall the spectral partition $P_s(q)$ from
\eqref{eq:cal-partition}. Its log-coordinate Hessian is
$\mathcal C=\operatorname{Cov}(\mathbf 1_J)$. There is a useful
conditional experiment for this distribution. Select $s$ of $m$ slots
with product weights $1+q_j$ on the first $d$ slots and one on the
others. Conditional on those slots, mark each selected feature slot
$j$ independently with probability $q_j/(1+q_j)$.
Summing over the unmarked selected companions shows that the marked
set has mass proportional to $c_{|J|}q_J$, as required.
Total conditional variance therefore gives
\begin{equation}\label{eq:marked-curvature}
 \mathcal C\succeq\operatorname{diag}\left(\frac{\rho_j}{1+q_j}\right).
\end{equation}
For weights bounded above by $w_{\max}$, the selection probability of
a slot of weight $w$ is
$we_{s-1}/(e_s+we_{s-1})$. Counting extensions of $(s-1)$-subsets
gives $se_s\le(m-s)w_{\max}e_{s-1}$, so this probability is at least
$(s/m)w/w_{\max}$. On the box $q_-\le q_j\le q_+$,
\eqref{eq:marked-curvature} consequently implies
\begin{equation}\label{eq:uniform-curvature}
 \mathcal C\succeq
 \frac{(s/m)q_-}{(1+q_-)(1+q_+)}I=\kappa_s I.
\end{equation}
The slightly conservative factor $(1+q_-)$ in this bound is harmless.

For a symmetric matrix $Q$ with eigenvalues $z_j$, define
$\mathcal A_s(Q)=\log P_s(e^{z_1},\ldots,e^{z_d})$. The diagonal
part of its Hessian is $\mathcal C$. Its off-diagonal coefficients
are $(\rho_i-\rho_j)/(z_i-z_j)$, with continuous limits at equality.
To bound them, join the vector $z$ to the vector obtained by swapping
coordinates $i,j$. The segment stays in the same log box. Integrating
\eqref{eq:uniform-curvature} along it gives
\[
 2(\rho_i-\rho_j)(z_i-z_j)\ge
 2\kappa_s(z_i-z_j)^2.
\]
The usual second derivative formula for a spectral function now proves
$\nabla^2\mathcal A_s\succeq\kappa_s I$ on all symmetric directions
in Frobenius norm, including repeated eigenvalues.

In target coordinates the calibrated $\mathsf P_s$ commutes with $B$.
For $B=U\operatorname{diag}(\lambda_j)U^\top$, its eigenvalues $p_j$
satisfy
\[
 \rho_j=\frac{\lambda_j}{1+\lambda_j},\qquad
 q_j=\frac{\lambda_j}{p_j},\qquad
 \left.\nabla_z\log P_s(e^z)\right|_{z=\log q}=\rho.
\]
The sandwich \eqref{eq:penalty-sandwich} gives
$a_sI\preceq\mathsf P_s\preceq b_sI$, placing $q_j$ in the box used above.
The matrix target moment is $D=B(I+B)^{-1}$, with
$\dot D=(I+B)^{-1}\dot B(I+B)^{-1}$.
The inverse Hessian bound gives
$\lVert\dot Q\rVert_F\le
\lVert\dot B\rVert_F/[\kappa_s(1+\ell)^2]$ for the log-parameter
matrix $Q$. On diagonal directions,
$\dot p_j=(p_j/\lambda_j)\dot\lambda_j-p_j\dot z_j$.
This yields the bound $\mathcal L_s$ in \eqref{eq:calibration-modulus}.

For completeness the off-diagonal bound does not assume a fixed
eigenbasis. If $\lambda_i>\lambda_j$, the exchange identity in
Section~\ref{sec:calibration} gives $q_i>q_j$, and
\[
 p_i-p_j=\frac{\lambda_i-\lambda_j}{q_i}
                   -p_j\frac{q_i-q_j}{q_i},\qquad
 \frac{q_i-q_j}{q_i}\le\log q_i-\log q_j.
\]
The same swap argument bounds the inverse moment divided difference
by $1/\kappa_s$, while
$(\rho_i-\rho_j)/(\lambda_i-\lambda_j)
=1/[(1+\lambda_i)(1+\lambda_j)]$.
Thus the absolute penalty divided difference is at most $\mathcal L_s$.
Diagonal and off-diagonal directions are orthogonal in Frobenius
geometry, and continuity covers ties. We have proved
\begin{equation}\label{eq:penalty-derivative}
 \lVert D\mathsf P_s(B)[\dot B]\rVert_F\le
       \mathcal L_s\lVert\dot B\rVert_F.
\end{equation}
Existence of the derivative follows locally from the positive matrix
Hessian and the inverse-function theorem. Uniqueness of calibration
patches these local maps along the full segment.

\subsection{Tracking the fit, law, and loss}
Along $Z+xE$, $\lVert\dot B\rVert_F\le2v\epsilon$, so
\eqref{eq:penalty-derivative} gives
$\lVert\dot{\mathsf P}_s\rVert_F\le L_X\epsilon$.
Each $A_S\succeq pI$. Factoring through the penalized ridge operator
and using $\sup_{x\ge0}x/(1+x^2)=1/2$ yields
\[
 \lVert L_S\rVert\le(2\sqrt p)^{-1},\qquad
 \lVert L_0\rVert\le1/2,\qquad \lVert F_S\rVert\le B_F.
\]
For the selected rows, differentiation gives
\[
 \dot L_S=A_S^{-1}E_S^\top(I-Z_SL_S)
                -L_SE_SL_S-A_S^{-1}\dot{\mathsf P}_sL_S,
\]
with the outside row selector restored when $L_S$ is viewed as a
$d$ by $m$ operator. The selected smoother is positive semidefinite
and at most $I$. Thus the first two terms are bounded by
$5\epsilon/(4p)$ and the last by
$L_X\epsilon/(2p^{3/2})$. The full-fit derivative is at most
$5\epsilon/4$. Hence $\lVert\dot F_S\rVert\le C_D\epsilon$.

The equation $T\dot T+\dot TT=EZ^\top+ZE^\top$, with $T\succeq I$,
has its integral solution
$\dot T=\int_0^\infty e^{-aT}(EZ^\top+ZE^\top)e^{-aT}\,da$.
Consequently $\lVert\dot T\rVert\le v\epsilon$, and
\[
 \lVert F_ST\rVert\le B_Ft,\qquad
 \lVert(F_ST)^{\boldsymbol{\cdot}}\rVert\le C_F\epsilon.
\]
The derivative of an unnormalized log weight obeys
\begin{align*}
 \left|\frac{d}{dx}\log\det A_S\right|
 &\le2\lVert Z_SA_S^{-1/2}\rVert_F
          \lVert E_SA_S^{-1/2}\rVert_F
       +\lVert A_S^{-1}\rVert_F\lVert\dot{\mathsf P}_s\rVert_F\\
 &\le L_p\epsilon.
\end{align*}
Thus its endpoint ratio lies in $[e^{-L_p\epsilon},e^{L_p\epsilon}]$,
and the normalized likelihood ratio lies in
$[e^{-2L_p\epsilon},e^{2L_p\epsilon}]$.
In addition,
\begin{equation}\label{eq:law-tv}
 \operatorname{TV}(p_s',p_s)\le\tanh(L_p\epsilon/2).
\end{equation}
To verify this sharper statement, if a positive unnormalized density
ratio lies in $[A,B]$ and has mean $\mu$, convexity of
$(x-\mu)_+$ bounds its normalized total variation by
$(B-\mu)(\mu-A)/[\mu(B-A)]$. Maximizing over $\mu$ gives
$(\sqrt B-\sqrt A)/(\sqrt B+\sqrt A)$, which proves
\eqref{eq:law-tv}.

Each normalized risk atom
$(F_ST)^\top V(F_ST)$ lies between zero and
$\lVert V\rVert B_F^2t^2 I$. Its derivative norm is at most
$2\lVert V\rVert B_FtC_F\epsilon$.
The change in the expectation of an atom in $[0,BI]$ is at most
$B$ times total variation: apply the scalar bound to each unit quadratic
form. Integrating the atom derivative and then changing the law proves
\[
 \lVert M_s'-M_s\rVert\le\eta_s(\epsilon)
\]
with \eqref{eq:explicit-eta}. This accounts for both centering at the
new full fit and changing normalization.

For the same nonzero raw response at both designs there is also the
useful loss comparison
\begin{equation}\label{eq:loss-comparison}
 e^{-2\epsilon}\le\frac{y^\top K_{X+H}y}{y^\top K_Xy}
                 \le e^{2\epsilon}.
\end{equation}
Indeed, along the path,
\[
 \left|\frac d{dx}\log(y^\top Ky)\right|
 =\frac{|y^\top K(EZ^\top+ZE^\top)Ky|}{y^\top Ky}
 \le2\epsilon,
\]
using $\lVert K^{1/2}Z\rVert\le1$ and $\lVert K^{1/2}\rVert\le1$.
Integrating proves \eqref{eq:loss-comparison}.

\subsection{Why reoptimization and the linear risk term matter}
The distinction is visible in an exact four-row model. Take $n=d=2$,
$s=3$, $\lambda_0=10/7$, and $W=e_1e_1^\top$. Scale every row by
$\sqrt t$. The calibrated penalty solves
$4r(t)^2+(6t-3\lambda_0)r(t)-4t\lambda_0=0$.
At $t=1$, $r=1$, $r'(1)=-1/34$, and
$h(t)=t/[4(r(t)+t)^2]$ has derivative $h'(1)=1/544$.
The strict base gap keeps this contrast branch maximal locally.
Thus the optimum can have nonzero first derivative.
At the same base with $W=I$, scale only the first coordinate group
by $\sqrt t$. Write the calibrated penalty as
$\operatorname{diag}(r_1(t),r_2(t))$. Define
\[
 A=(r_1+t)(r_2+2),\qquad B=(r_1+2t)(r_2+1),\qquad Z=A+B.
\]
The two count types $(1,2)$ and $(2,1)$ have probabilities $A/Z$ and $B/Z$.
With $\lambda=10/7$, the calibration equations are
\[
 \frac{A/(r_1+t)+2B/(r_1+2t)}Z=\frac2{2t+\lambda},\qquad
 \frac{2A/(r_2+2)+B/(r_2+1)}Z=\frac2{2+\lambda}.
\]
The two contrast risk values are
\[
 \alpha_1(t)=\frac{tA}{2Z(r_1+t)^2},\qquad
 \alpha_2(t)=\frac{B}{2Z(r_2+1)^2}.
\]
Differentiating the two calibration equations at $t=r_1=r_2=1$ gives
\[
 (r_1',r_2')=\frac1{1224}(-1,-35),\qquad
 (\alpha_1',\alpha_2')=\frac1{117504}(-589,805).
\]
Thus $\alpha_1'-\alpha_2'=-41/3456$.
For $t>1$ sufficiently close to one, an old maximizer from the first
group has linear regret. Reoptimization over the complete tied space is
essential to the quadratic statement.

\section{A finite remainder and a common contrast-space radius}\label{app:enlarged}
This appendix supplies the finite approximation used by the enlarged-space
theorem. It retains the linear fit change, the centered determinant score,
and the exact change in the Full fit. Its error is second order locally. The construction
also explains which exact mean correction would be needed for a supplied
penalty that is not calibrated. The main theorem uses exact calibration.

\subsection{The common finite surrogate}
Fix the balanced base and a budget $d\le s<m$. Work in target coordinates:
\[
 Z=X_0/\sqrt{\lambda_0},\quad E=H/\sqrt{\lambda_0},\quad Z_1=Z+E,
 \quad V=W/\lambda_0,\quad P_0=pI,\quad\Delta P=P_1-pI.
\]
Here $p=r_s/\lambda_0>0$. The endpoint is full column rank and $P_1$ is
SPD. In the exactly calibrated case $P_1=R_s(X_0+H)/\lambda_0$.
Use this same $P_1$ in its law and fit. For each original-row subset define
\begin{align*}
 A_S&=Z^\top D_SZ+pI,& A_{1,S}&=Z_1^\top D_SZ_1+P_1,\\
 L_S&=A_S^{-1}Z^\top D_S,&L_{1,S}&=A_{1,S}^{-1}Z_1^\top D_S,\\
 G_S&=Z^\top D_SE+E^\top D_SZ+\Delta P,&
 D^{\rm fit}_S&=A_S^{-1}(E^\top D_S-G_SL_S),\\
 L_{*,i}&=(I+Z_i^\top Z_i)^{-1}Z_i^\top,&
 J_S&=L_S-L_{*,0}+D^{\rm fit}_S,\\
 Q_S&=J_S^\top VJ_S,& t_S&=\operatorname{tr}(A_S^{-1}G_S).
\end{align*}
In these formulas $Z_0=Z$. Write $\mathbb E_i$ for the base and endpoint
laws, $\overline Q=\mathbb E_0Q_S$, $\bar t=\mathbb E_0t_S$, and
\[
 N(Z,P)=\sum_{|S|=s}\det(P+Z^\top D_SZ),\quad
 \mu=\frac{N(Z_1,P_1)}{N(Z,pI)},\quad
 \bar w=\mu-1-\bar t,\quad \Delta=L_{*,1}-L_{*,0}.
\]
The complete surrogate is
\begin{equation}\label{eq:common-surrogate}
 \widetilde C=\overline Q+
 \frac{\mathbb E_0[t_SQ_S]-\bar t\,\overline Q}{\mu}
 -\Delta^\top V\Delta.
\end{equation}
The endpoint covariance is
$C_1=\mathbb E_1[(L_{1,S}-L_{*,1})^\top V(L_{1,S}-L_{*,1})]$
when the endpoint is exactly calibrated. Put
$T_1=(I+Z_1Z_1^\top)^{1/2}$ and $\widetilde M=T_1\widetilde C T_1$.
The surrogate itself need not be PSD.

Here is the exact error identity. Define
\[
 R_S=L_{1,S}-L_S-D^{\rm fit}_S,\qquad
 w_S=\frac{\det A_{1,S}}{\det A_S}-1-t_S.
\]
The endpoint density is
\[
 \frac{p_1(S)}{p_0(S)}
 =1+\frac{t_S-\bar t}{\mu}+\frac{w_S-\bar w}{\mu}.
\]
Since exact calibration gives $\mathbb E_1L_{1,S}=L_{*,1}$,
expansion around $L_{*,0}$ yields
\begin{equation}\label{eq:common-error-identity}
 C_1-\widetilde C
 =\frac{\mathbb E_0[(w_S-\bar w)Q_S]}{\mu}
  +\mathbb E_1[J_S^\top VR_S+R_S^\top VJ_S]
  +\mathbb E_1[R_S^\top VR_S].
\end{equation}
Thus the finite Full shift cancels exactly. Omitting it from the surrogate
would alter the error identity.

\subsection{Ordered resolvent bounds at an SPD endpoint}
All norms below are operator norms. Let $e_s^{\rm row}$ denote the sum of the $s$
largest squared row norms of $E$. Put $c_0=1/\lambda_0$ and
\begin{align*}
 k_-&=\max\{0,s-n(d-1)\},&k_+&=\min\{n,s\},\\
 a_-&=p+c_0k_-,&a_+&=p+c_0k_+,\\
 b_*&=a_-^{-1/2},&u_*&=\sqrt{c_0k_+/(p+c_0k_+)},\\
 u_-&=\sqrt{c_0k_-/a_-},&a_*^2&=\min\{e_s^{\rm row},\|E\|^2\},\\
 v_*^2&=a_*^2/a_-,&\tau_*&=e_s^{\rm row}/a_-,\\
 g_*&=2u_*v_*+\|\Delta P\|/a_-,&\delta_*&=g_*+v_*^2.
\end{align*}
Choose a valid $l>0$ with $P_1\succeq lI$. Define
\begin{align*}
 \ell_{\rm pen}&=l/a_+,\qquad
 \ell_{\rm gram}=\ell_{\rm pen}+(u_--v_*)_+^2,\\
 \ell_{\rm ray}&=\min_{x\in[c_0k_-,c_0k_+]}
                 \frac{l+(\sqrt x-a_*)_+^2}{p+x},\\
 \ell_*&=\max\{\ell_{\rm gram},\ell_{\rm ray},1-g_*\}>0,
 \qquad \kappa_*=1/\ell_*,\\
 \beta_*&=\min\left\{\delta_*\kappa_*,
        \max\left(\left|\ell_*^{-1}-1\right|,
                  \left|(1+\delta_*)^{-1}-1\right|\right)\right\},\\
 r_{\rm pt}&=b_*\{\beta_*(v_*+g_*u_*)+\kappa_*v_*^2u_*\}.
\end{align*}
This scalar minimum is explicit. If $a_*>0$, clip
\[
 t_*=\frac{-(p-l-a_*^2)+
             \sqrt{(p-l-a_*^2)^2+4a_*^2p}}{2a_*}
\]
to $[\sqrt{c_0k_-},\sqrt{c_0k_+}]$ and evaluate at $x=t_*^2$.
If $a_*=0$, use the lower endpoint when $p\ge l$ and the upper endpoint
otherwise. To verify the rule, write $t=\sqrt x$. Below $a_*$ the
ratio decreases. Above $a_*$ its derivative has the sign of
$a_*t^2+(p-l-a_*^2)t-a_*p$, whose unique positive root exceeds $a_*$.

For each subset suppress the subscript and set
\[
 \begin{gathered}
 U=A^{-1/2}Z^\top D_S,\qquad Y=A^{-1/2}E^\top D_S,\\
 B_1=UY^\top+YU^\top+A^{-1/2}\Delta P A^{-1/2},\qquad B_2=YY^\top,\\
 B=B_1+B_2,\qquad H_B=(I+B)^{-1}.
 \end{gathered}
\]
Then $\|U\|\le u_*$, $\|Y\|\le v_*$,
$\operatorname{tr}B_2\le\tau_*$, $\|B_1\|\le g_*$, and
$\|B\|\le\delta_*$. The trace and operator bounds on $B_2$ are different;
$\tau_*$ cannot be replaced by $v_*^2$.

The penalty and Gram parts give
$I+B\succeq\ell_{\rm gram}I$. For the Rayleigh bound, any nonzero
$h$ has $x=\|D_SZh\|^2/\|h\|^2\in[c_0k_-,c_0k_+]$, and
\[
 h^\top A_1h\ge\{l+(\sqrt x-a_*)_+^2\}\|h\|^2,
 \qquad h^\top Ah=(p+x)\|h\|^2.
\]
Also $I+B\succeq(1-g_*)I$. Consequently
\[
 \ell_*I\preceq I+B\preceq(1+\delta_*)I,\quad
 \|H_B\|\le\kappa_*,\quad \|H_B-I\|\le\beta_*.
\]
The last bound follows both from $H_B-I=-H_BB$ and from its eigenvalues.
There is no assumption $\delta_*<1$ here.
The ordered resolvent identity is
\begin{equation}\label{eq:endpoint-resolvent}
 R=A^{-1/2}\{(H_B-I)(Y-B_1U)-H_BB_2U\}.
\end{equation}
Indeed $L_1=A^{-1/2}H_B(U+Y)$ and
$L+D^{\rm fit}=A^{-1/2}(U+Y-B_1U)$; subtract and use
$H_B-I=-H_B(B_1+B_2)$. This proves $\|R\|\le r_{\rm pt}$ without
commuting any matrices.

\subsection{Residual moments and the finite normalized error}
Retain the two PSD residual moments
\[
 T_S^{\rm res}=Y-B_1U=A^{1/2}D^{\rm fit},\qquad
 W_S^{\rm res}=Y^\top U=D_SE L_S.
\]
For each of the three PSD atoms
$F_S\in\{Q_S,(T_S^{\rm res})^\top T_S^{\rm res},
(W_S^{\rm res})^\top W_S^{\rm res}\}$, define
$\bar F=\mathbb E_0F_S$ and $F^{\rm sc}=\mathbb E_0[t_SF_S]$.
Put
\[
 \rho_*=\tau_*+\sum_{j=2}^d\binom dj\delta_*^j,\quad
 a_{\rm den}=\frac{\rho_*+|\bar w|}{\mu},\quad
 q_{\rm den}=\frac{(1+\delta_*)^d}{\mu},
\]
and define the scalar upper bound
\[
 \mathcal U(\bar F,F^{\rm sc})=
 \min\left\{q_{\rm den}\|\bar F\|,
 \lambda_{\max}\left(\bar F+
       \frac{F^{\rm sc}-\bar t\bar F}{\mu}+a_{\rm den}\bar F\right)\right\}.
\]
Both entries are nonnegative. Expansion of $\det(I+B)$ gives
$|w_S|\le\rho_*$ and $0<\det(I+B)\le(1+\delta_*)^d$.
The exact density formula therefore proves the PSD inequalities
\[
 \mathbb E_1F_S\preceq q_{\rm den}\bar F,\qquad
 \mathbb E_1F_S\preceq\bar F+
       (F^{\rm sc}-\bar t\bar F)/\mu+a_{\rm den}\bar F.
\]
Take their scalar norm bounds; no matrix minimum is used.
Let $q_J,q_T,q_W$ be these three values and put
\begin{align}
 r_{\rm ms}&=b_*\{\beta_*\sqrt{q_T}+\kappa_*v_*\sqrt{q_W}\},
 &e_R&=\|V\|\min\{r_{\rm ms}^2,r_{\rm pt}^2\},\nonumber\\
 D_{\rm end}&=a_{\rm den}\|\bar Q\|+2\sqrt{q_Je_R}+e_R,
 &e_{\rm end}&=(1+\|Z_1\|^2)D_{\rm end}.
 \label{eq:endpoint-error}
\end{align}
Then
\begin{equation}\label{eq:endpoint-error-bound}
 \|C_1-\widetilde C\|\le D_{\rm end},\qquad
 \|M_1-\widetilde M\|\le e_{\rm end}.
\end{equation}
To prove this, rewrite \eqref{eq:endpoint-resolvent} as
$R=A^{-1/2}\{(H_B-I)T^{\rm res}-H_BYW^{\rm res}\}$.
For every unit vector $x$, Minkowski's inequality in endpoint probability
space gives
\[
 \sqrt{\mathbb E_1\|Rx\|^2}
 \le b_*\{\beta_*\sqrt{\mathbb E_1\|T^{\rm res}x\|^2}
       +\kappa_*v_*\sqrt{\mathbb E_1\|W^{\rm res}x\|^2}\}
 \le r_{\rm ms}.
\]
The pointwise alternative gives $r_{\rm pt}$ under the same probability
law, with no extra density factor. Thus
$0\preceq\mathbb E_1R^\top VR\preceq e_RI$.
Quadratic-form Cauchy--Schwarz bounds the cross term in
\eqref{eq:common-error-identity} by $2\sqrt{q_Je_R}$.
Its density term lies between $-a_{\rm den}\bar Q$ and
$a_{\rm den}\bar Q$. Adding proves the raw bound. Congruence by $T_1$
proves the normalized bound since $\|T_1\|^2=1+\|Z_1\|^2$.
It also bounds every raw response's error divided by its endpoint Full loss.

All moments above are explicit finite sums. They can also be reconstructed
by coefficient extraction without enumerating count configurations.
For $r$ distinct specified row labels in a group and $q$ inverse factors,
use the polynomial
\[
 F_{r,q}(z)=\sum_{k=0}^n\binom nk(p+c_0k)^{1-q}
                 \frac{(k)_r}{(n)_r}z^k.
\]
An impossible row-label request is zero before any factorial division.
The expectation of a product with group profiles $(r_j,q_j)$ is
$[z^s]\prod_jF_{r_j,q_j}(z)$ divided by
$[z^s]F_{0,0}(z)^d$. Repeated row selectors collapse to one selector.
Expand $J_S$, $D^{\rm fit}_S$ and $t_S$ from their displayed formulas,
then apply this rule term by term. This reconstructs $\bar Q$ and its
score moment. For the residual moment, the factor $A$ in
$(D^{\rm fit})^\top A D^{\rm fit}$ cancels one inverse factor.
Its selector and inverse degrees are at most $(4,3)$, or $(5,4)$ after
score insertion. If column $a$ is in group $h$ with signed whitened
amplitude $\alpha_a$, then
\[
 (W_S^{\rm res})_{ia}=\mathbf1_i\mathbf1_a E_{ih}\alpha_a/(p+c_0K_h),
\]
which gives degrees at most $(3,2)$, or $(4,3)$ with the score.
These formulas specify every residual moment used in
\eqref{eq:endpoint-error}; an endpoint covariance is not an input.

For completeness, the supplied-penalty correction is also exact.
If $\bar L_1=\mathbb E_1L_{1,S}$ and $B_{\rm mean}=\bar L_1-L_{*,1}$,
subtract $\Delta^\top VB_{\rm mean}+B_{\rm mean}^\top V\Delta$ from
\eqref{eq:common-surrogate} for Full-centered MSE, and subtract
$B_{\rm mean}^\top VB_{\rm mean}$ as well for centered covariance.
Their true risks receive exactly the same corrections, so the remainder
\eqref{eq:endpoint-error} is unchanged. The mean can be computed exactly:
with $G=Z_1^\top Z_1$ and $c_j=\binom{m-j}{s-j}$,
\[
 N_1=\sum_jc_j[t^j]\det(P_1+tG),\qquad
 \bar L_1=\frac{\sum_jc_j[t^j]\{t\operatorname{adj}(P_1+tG)\}}{N_1}Z_1^\top.
\]
The determinant identity follows by counting how many $s$-sets contain
each row minor. Differentiating it with respect to $Z_1$, while holding
$P_1$ fixed, gives the mean formula. Expanding the squared fits gives
the corrections above. Under exact calibration $B_{\rm mean}=0$.
This distinction prevents a supplied fit's mean defect from being
silently called sampling covariance.

\subsection{Explicit majorants and a terminating radius choice}
Normalize $c=\max_jW_{jj}=1$. Then $\|W\|\le d$ and
$\|V\|\le w=d/\lambda_0$. Fix a budget and put
\[
 z=\sqrt{n/\lambda_0},\quad
 r_0=\min\{z/2,p/2,1\},\quad p_f=p/2,\quad z_+=z+r_0,
 \quad t_+=\sqrt{1+z_+^2}.
\]
For $h=\|E\|_F+\|\Delta P\|\le r_0$, the straight feature/penalty
path is full rank with penalty at least $p_fI$. Define
\begin{align*}
 b_0&=(2\sqrt p)^{-1}+1/2,&b_f&=(2\sqrt{p_f})^{-1}+1/2,\\
 C_D&=5/(4p_f)+5/4+1/(2p_f^{3/2}),&C_F&=t_+C_D+b_fz_+,\\
 L_{\rm law}&=2\sqrt d/\sqrt{p_f}+d/p_f,
 &A_M&=2b_ft_+C_F+b_f^2t_+^2L_{\rm law}/2,\\
 Q_B&=C_F+b_ft_+L_{\rm law},&L&=w(A_M+Q_B^2r_0).
\end{align*}
Then, uniformly over the normalized PSD queries,
\begin{equation}\label{eq:joint-majorant}
 \|M_1-M_0\|\le\eta(h):=Lh.
\end{equation}
Here this statement can refer either to centered covariance or Full-centered
MSE with a supplied penalty. To check it, the selected and Full derivative
bounds from Appendix~\ref{app:transfer} give
$\|\dot F_S\|\le C_Dh$ for $F_S=L_S-L_*$ and $\|F_S\|\le b_f$.
The root derivative gives $\|(F_ST)^{\boldsymbol\cdot}\|\le C_Fh$.
The feature and penalty parts of the log-determinant derivative are at
most $2\sqrt d\|E\|_F/\sqrt{p_f}$ and $d\|\Delta P\|/p_f$.
Thus total variation is at most $L_{\rm law}h/2$ by
\eqref{eq:law-tv}. The PSD MSE atoms give the bound $wA_Mh$.
At the base $\mathbb E_0F_S=0$, so
$\|\mathbb E_1F_{1,S}T_1\|\le Q_Bh$.
Centering subtracts a PSD mean square of norm at most $wQ_B^2h^2$.
This proves \eqref{eq:joint-majorant}, including along a path that is
not itself calibrated.

The endpoint remainder has an explicit common majorant. For $0\le h\le r_0$
put
\begin{align*}
 v_h&=h/\sqrt p,&u_h&=2v_h+h/p,&\delta_h&=u_h+v_h^2,\\
 \rho_h&=v_h^2+\sum_{j=2}^d\binom dj\delta_h^j,
 &r_h&=\frac{\delta_h(v_h+u_h)+v_h^2}{\sqrt p(1-\delta_h)},\\
 q_h&=\left(\frac{1+\delta_h}{1-\delta_h}\right)^d,
 &d_h&=(v_h+u_h)/\sqrt p,&j_h&=b_0+d_h.
\end{align*}
When $\delta_h<1$, define
\begin{equation}\label{eq:common-remainder-majorant}
 \bar e(h)=[1+(z+h)^2]w
 \left\{\frac{2\rho_hj_h^2}{(1-\delta_h)^d}
       +2\sqrt{q_h}\,j_hr_h+r_h^2\right\}.
\end{equation}
Then $e_{\rm end}\le\bar e(h)$, and $\bar e(h)=O(h^2)$.
Indeed $\|A^{-1}\|\le1/p$, $\|U\|\le1$, $\|Y\|\le v_h$,
$\operatorname{tr}YY^\top\le v_h^2$, and $\|B_1\|\le u_h$.
The Neumann bound now gives $\|R_S\|\le r_h$, while
$\mu\ge(1-\delta_h)^d$ and $|\bar w|\le\rho_h$.
Also $\|J_S\|\le j_h$ and the density is at most $q_h$.
These bounds control the three terms of \eqref{eq:common-error-identity}
by the three terms in braces. They also dominate the individual choices
in \eqref{eq:endpoint-error}: its pointwise alternative is no larger
than $r_h$, and its first PSD averaging alternative gives
$q_J\le q_hw j_h^2$. This proves the stated domination, not just the
existence of another second-order error estimate. Every scalar expression
in \eqref{eq:common-remainder-majorant} is nondecreasing on this domain.

Let $\mathcal E_I$ be one of the spaces in Theorem~\ref{thm:enlarged},
and let $P_0^I,P_1^I$ project onto $\mathcal E_I$ and
$K_1^{1/2}\mathcal E_I$. Since $K_0^{1/2}=I$ on every contrast,
the metric-root bound and its inverse identity give
\begin{equation}\label{eq:enlarged-projector}
 \|P_1^I-P_0^I\|\le f(h):=\frac{\omega(h)}{1-\omega(h)},
 \qquad \omega(h)=z_+h<1.
\end{equation}
This follows by resolving the parallel and perpendicular parts of
$K_1^{1/2}|_{\mathcal E_I}$; its least singular value is at least
$1-\omega(h)$. Write $h_0=\alpha_s$ and let $\gamma$ be the uniform
base top-to-complement gap. When $f(h)<1$, projection of a base top vector
onto the endpoint trial space has base Rayleigh quotient at least
$h_0(1-f(h)^2)$. A unit vector in its complement has at most $f(h)^2$
energy in $\mathcal E_I$, so its base quotient is at most
$h_0-\gamma+\gamma f(h)^2$. Therefore the finite quantities from
Theorem~\ref{thm:enlarged} satisfy
\begin{align}
 g&\ge\gamma-(h_0+\gamma)f(h)^2-2\eta(h)-4\bar e(h),
 \label{eq:enlarged-gap-majorant}\\
 B&\le\eta(h)+2h_0f(h)+2\bar e(h).
 \label{eq:enlarged-cross-majorant}
\end{align}
The four remainder terms in the first line account for two compressed
surrogate errors and the explicit $2e$ margin. For the second line,
subtract the zero base cross block. The two projector changes contribute
at most $2h_0f(h)$; the operator change and surrogate error give the rest.

For all contrasts use $\gamma=\alpha_s-\beta_s>0$.
For a fixed threshold $\vartheta>0$, use
$\gamma=\min\{\alpha_s\vartheta,\alpha_s-\beta_s\}>0$.
Choose the first integer $j\ge0$ such that $h_j=r_0 2^{-j}$ satisfies
\begin{equation}\label{eq:enlarged-dyadic}
 \delta_{h_j}<1,\quad \omega(h_j)<1,\quad f(h_j)<1,\quad
 (h_0+\gamma)f(h_j)^2+2\eta(h_j)+4\bar e(h_j)\le\gamma/2.
\end{equation}
The search terminates because every error term tends to zero.
Monotonicity makes this a sufficient closed ball in all joint directions.
On it $g\ge\gamma/2$, $B=O(h)$, and the main theorem gives quadratic
regret and leakage $O(h+\sqrt\rho)$ for every response of deficit at
most $\rho$. None of these constants uses a query diagonal gap.

Finally, use the calibration constants from Appendix~\ref{app:transfer}.
At the balanced base, set
\[
 r_{{\rm cal},s}=\frac12\sqrt{n/\lambda_0},\qquad
 L_{X,s}=2v\mathcal L_s,
\]
where $v$ and $\mathcal L_s$ are defined in
\eqref{eq:radius-constants}--\eqref{eq:composite-constants} of that appendix.
These are separate from this appendix's joint radius $r_0$.
On the feature ball of radius $r_{{\rm cal},s}$, exact calibration gives
$\|\Delta P\|\le\|\Delta P\|_F\le L_{X,s}\epsilon$.
Consequently the explicit positive feature radius
\begin{equation}\label{eq:enlarged-feature-radius}
 r_{\rm unif}=
 \min_{d\le s<m}\left\{r_{{\rm cal},s},\frac{h_{j,s}}{1+L_{X,s}}\right\}
\end{equation}
works for all budgets and all queries with $\max_jW_{jj}=1$.
For a general scale $c>0$, risk and error bounds scale by $c$ and
leakage uses the normalized deficit $\rho/c$.
This radius is not uniform as $n,d,\lambda_0$ vary, as the sector gap
vanishes, or as $\vartheta$ tends to zero. No arithmetic or calibration
solver error is implicit in it; those would require additional verified
error bounds. The finite endpoint formulas remain valid outside this
sufficient neighborhood, though their bounds may be loose.

\section{The gain under recalibration and exact local certificates}\label{app:gain}
Section~\ref{sec:transfer} proves Theorem~\ref{thm:gain} with explicit constants. The shared-subset operator identity keeps a factor $t$ in the
perturbation bound; a comparison of the two base deficits then controls
the difference of sharp risks. Section~\ref{sec:mixture} gives the separate positive mean-share result at the balanced base. The following subsections sharpen the constants for
the three exact certificates in Table~\ref{tab:gain-certificates}.
Throughout, $n,d\ge2$, $d\le s<nd$, and the base consists of signed unit
coordinate replicas. The physical target is the fixed matrix
$\lambda_0I$, $\lambda_0>0$. Every endpoint uses its unique exactly
calibrated penalty in both the original-row determinant law and the ridge
fit. Only the subset is random. The query is any fixed nonzero PSD matrix.
All unmarked norms are operator norms; derivatives of the calibration
map use Frobenius norm on their input and output.

\subsection{A sharper finite spectral bound}\label{app:gain-envelope}
To certify a radius $R$, we need bounds $L_R,L_D,L_M$ on the entire ball.
The spectral gates below state what those bounds must satisfy. We then
construct them in this order: \eqref{eq:gain-cal-gates} places the feature
ball inside the calibration region; \eqref{eq:gain-base-energy} computes
derivative energies at the base; and \eqref{eq:gain-energy-transport}
extends the derivative bounds over the ball. Equations
\eqref{eq:gain-stack-norms} and \eqref{eq:gain-stack-constants} turn
these bounds into $L_R,L_D,L_M$, which enter
\eqref{eq:gain-spectral-gates}.

The following refinement of the two-eigenvector argument will be used
for the exact certificates. Suppose constants $L_R,L_D,L_M=L_R+TL_D$
satisfy \eqref{eq:gain-three-bounds} on a proposed radius $R$.
They may be sharper than \eqref{eq:gain-basic-constants}.
Write $f(t)=D-Qt$ and $\gamma_0=\gamma(0)$. Choose
\begin{align*}
 0<g_T&\le\min_{[0,T]}(\alpha-\beta),&
 k_C&\ge\sup_{[0,T]} f/\alpha,\\
 a_-&\ge\sup_{[0,T]}\max\{0,-j_{A\beta},-j_{A\gamma}\},&
 b_+&\ge\sup_{[0,T]}\max\{0,j_{B\beta},j_{B\gamma}\},
\end{align*}
where
\begin{align*}
 j_{A\beta}&=B_1+B_2t+fB_0/A,&
 j_{A\gamma}&=(-2+t)\gamma_0+f\gamma_0/A,\\
 j_{B\beta}&=B_1+B_2t+f\beta/\alpha,&
 j_{B\gamma}&=(-2+t)\gamma_0+f\gamma/\alpha.
\end{align*}
Let $\Pi_M$ denote the fixed base mean-sector projector. With the base
deficits used above, direct subtraction gives
\[
 N=t(f/A)H_A+R_A=t(f/\alpha)H_B+R_B,
 \quad R_A\succeq-tca_-\Pi_M,\quad R_B\preceq tcb_+\Pi_M.
\]
For clarity, the remainders vanish on contrasts. On means they are
$R_A/t=j_{A\beta}P_W+j_{A\gamma}R_W$ and
$R_B/t=j_{B\beta}P_W+j_{B\gamma}R_W$; their bounds use
$P_W+R_W\preceq cI$.

Put $e_R=L_R\epsilon$, $e_M=L_M\epsilon$. If $e_R<g$ and $e_M<g_T$,
projecting the two endpoint top-eigenvector equations onto the base mean
sector gives
\[
 \|\Pi_Mu\|\le\frac{e_R}{g-e_R},\qquad
 \|\Pi_Mv\|\le\frac{e_M}{g_T-e_M}.
\]
Indeed the ridge eigenvalue is at least $(A-e_R)c$, whereas its base
mean block is at most $B_0cI$; the mixture argument uses $\beta(t)cI$.
The forcing terms have norms at most $e_Rc,e_Mc$. With
$\mathcal G=h_R-h_t-tfc$, the same two variational comparisons as before
are
\[
 \mathcal G\le\|E_B-E_A\|-u^\top Nu,\qquad
 \mathcal G\ge-\|E_B-E_A\|-v^\top Nv.
\]
In the first, discard the nonnegative proportional $H_A$ term. In the
second, use $v^\top H_Bv\le2e_Mc$. This proves
\begin{equation}\label{eq:gain-phi}
 |\mathcal G|\le tc\Phi(\epsilon),\quad
 \Phi(\epsilon)=L_D\epsilon+\max\left\{
 a_-\left(\frac{L_R\epsilon}{g-L_R\epsilon}\right)^2,
 2k_CL_M\epsilon+b_+\left(\frac{L_M\epsilon}{g_T-L_M\epsilon}\right)^2
 \right\}.
\end{equation}
For fixed constants, $\Phi(\epsilon)/\epsilon$ is nondecreasing: each
nonlinear term divided by $\epsilon$ is a nonnegative multiple of
$\epsilon/(g-L\epsilon)^2$. Therefore the gates
\begin{equation}\label{eq:gain-spectral-gates}
 L_RR<g,\qquad L_MR<g_T,\qquad \Phi(R)\le D/4
\end{equation}
give \eqref{eq:gain-modulus} on the entire ball with $C=\Phi(R)/R$
and a gain of at least $Dtc/4$.

All scalar choices can be made with rational polynomial operations.
Take $\alpha_{\min}=\min_{[0,T]}\alpha$, $g_T=\min_{[0,T]}(\alpha-\beta)$,
and $k_C=D/\alpha_{\min}$. A quadratic minimum uses its endpoints and
any interior vertex. The two $j_A$ are affine, so their endpoints give
$a_-$. For $b_+$, multiply each $j_B$ by $\alpha$, take the maximum of
zero and all Bernstein coefficients of the resulting polynomials on
$[0,T]$, then divide by $\alpha_{\min}$. Specifically, for
$p(t)=\sum_{j=0}^Np_jt^j$, those coefficients are
$\sum_{j=0}^ip_jT^j\binom ij/\binom Nj$, $0\le i\le N$.
The nonnegative Bernstein basis sums to one, so this is a uniform bound.

\subsection{A finite calibration derivative difference}\label{app:gain-cal-difference}
We supply the local calibration bound needed by the sharper certificate;
a bound on two derivative norms alone would not suffice. In this and the
remaining subsections, write $\zeta=n/\lambda_0$ (distinct from the
mean-loss coefficient $b$), $z=\sqrt\zeta$, $p_0=r/\lambda_0$, and
$q_0^{\rm cal}=\zeta/p_0$. Here $rI$ is the physical base penalty.
The marked-set distribution in Section~\ref{sec:calibration} has
cardinality $K$ with weights
\[
 \binom dk\binom{m-k}{s-k}(q_0^{\rm cal})^k,
 \quad 0\le k\le d.
\]
Define its positive curvatures and the base derivative coefficients by
\begin{align*}
 \lambda_{\parallel}&=\operatorname{Var}(K)/d,&
 \lambda_{\perp}&=\mathbb E[K(d-K)]/[d(d-1)],&
 \mu_*&=\min\{\lambda_{\parallel},\lambda_{\perp}\},\\
 t_j&=p_0/\zeta-p_0/[(1+\zeta)^2\lambda_j],&
 L_0^{\rm cal}&=\max_j|t_j|.&&
\end{align*}
The marked covariance has eigenvalues $\lambda_{\parallel}$ on scalar
directions and $\lambda_{\perp}$ on traceless diagonal directions.
Differentiating the moment equation gives
\begin{equation}\label{eq:gain-base-cal-derivative}
 DP(\zeta I)[F]=t_{\perp}\{F-\operatorname{tr}(F)I/d\}
                      +t_{\parallel}\operatorname{tr}(F)I/d.
\end{equation}
Orthogonal equivariance extends the diagonal formula to all symmetric
$F$. Positivity of both curvatures follows from the full support of the
marked-set law.

Choose a log radius $a>0$, upper bounds $E_a\ge e^a$ and
$\chi\ge e^{2\sqrt d a}$, and
\[
 0<\delta\le\min\{\zeta/2,\mu_*a(1+\zeta/2)^2/(2\chi)\}.
\]
Set $\ell=\zeta-\delta$, $u=\zeta+\delta$ and define
\begin{align*}
 R_{\max}&=(1+\ell)^{-2},&R_0&=(1+\zeta)^{-2},\\
 \Delta R_*&=\max\{R_{\max}-R_0,R_0-(1+u)^{-2}\},&
 \Delta P_*&=p_0\max\{E_au/\zeta-1,1-\ell/(E_a\zeta)\},\\
 J_*&=\frac{E_a-1}{q_0^{\rm cal}}
 +\frac{\Delta P_*\chi R_{\max}}{\mu_*}
 +\frac{p_0(\chi-1)R_{\max}}{\mu_*}
 +\frac{p_0\Delta R_*}{\mu_*},\\
 L_{\rm cal}&=L_0^{\rm cal}+J_*,\\
 p&=\max\{(s-d+1)/(m-d+1),\ell/(q_0^{\rm cal}E_a)\}.
\end{align*}
We claim, throughout $\|B-\zeta I\|_F\le\delta$,
\begin{equation}\label{eq:gain-cal-difference}
 P(B)\succeq pI,\qquad
 \|DP(B)-DP(\zeta I)\|_{F\to F}\le J_*.
\end{equation}
Here are details that also cover repeated spectra. Within the radius-$a$
log-parameter ball about $\log(q_0^{\rm cal})I$, each marked log weight
changes by at most $\sqrt d a$. Thus its normalized density ratio lies
between $\chi^{-1}$ and $\chi$. The identity
$\operatorname{Var}(v^\top\mathbf1_J)=\min_c\mathbb E(v^\top\mathbf1_J-c)^2$
places its covariance $\mathcal C$ between $\chi^{-1}\mathcal C_0$ and
$\chi\mathcal C_0$. Coordinate swaps, as in the proof of
\eqref{eq:uniform-curvature}, extend the lower Hessian bound
$\mu_*/\chi$ to symmetric matrix directions.
The target moment $B(I+B)^{-1}$ changes by at most
$\delta/(1+\zeta/2)^2\le\mu_*a/(2\chi)$. On the log-ball boundary,
the radial component of the gradient of the convex calibration objective
is therefore at least $(\mu_*/\chi)a^2-(\mu_*a/(2\chi))a>0$.
Its minimum on the closed ball is interior; uniqueness identifies it
with the calibrated solution. In particular every odds eigenvalue lies
in $[q_0^{\rm cal}/E_a,q_0^{\rm cal}E_a]$, which proves the penalty floor.

In Gram eigenvalue coordinates $b_j$, the calibration Jacobian is
\[
 J_{\rm cal}=\operatorname{diag}(1/q_j)
 -\operatorname{diag}(p_j)\mathcal C^{-1}
                         \operatorname{diag}((1+b_j)^{-2}).
\]
Write $J_{{\rm cal},0}$ for its value at the balanced base.
The covariance comparison gives
$\|\mathcal C^{-1}\|\le\chi/\mu_*$ and
$\|\mathcal C^{-1}-\mathcal C_0^{-1}\|\le(\chi-1)/\mu_*$.
Subtract the baseline Jacobian by changing, in order, $1/q_j$, $p_j$,
$\mathcal C^{-1}$, and the final diagonal factor. Their four bounds are
exactly the four terms of $J_*$, so
$\|J_{\rm cal}-J_{{\rm cal},0}\|\le J_*$.
For two distinct Gram eigenvalues, join their vector to its coordinate
swap. This segment stays in the Gram ball. With
$e=(e_i-e_j)/\sqrt2$, equivariance and integration give
\[
 \frac{p_i(b)-p_j(b)}{b_i-b_j}-t_{\perp}
   =\int_0^1 e^\top\{J_{\rm cal}(b_{\rm swap}+x(b-b_{\rm swap}))-J_{{\rm cal},0}\}e\,dx.
\]
Thus the off-diagonal derivative coefficients satisfy the same bound.
The diagonal and off-diagonal symmetric subspaces are orthogonal and
invariant, so their bounds combine by a maximum. The baseline map
\eqref{eq:gain-base-cal-derivative} is independent of eigenbasis, and
continuity covers repeated eigenvalues. This proves
\eqref{eq:gain-cal-difference}.

For a feature radius $R$ impose
\begin{equation}\label{eq:gain-cal-gates}
 R<z,\qquad (2z+R)R\le\delta.
\end{equation}
Let $v=z+R$, $L_X=2vL_{\rm cal}$, $L_{X0}=2zL_0^{\rm cal}$. The entire
feature ball is full rank, lies in the Gram ball, and, for every unit
Frobenius feature direction, satisfies
\begin{equation}\label{eq:gain-dot-cal-difference}
 \|\dot P\|_F\le L_X,\qquad
 \|\dot P-\dot P_0\|_F\le\Delta\dot P_*=2vJ_*+2L_0^{\rm cal}R.
\end{equation}
The last inequality adds and subtracts
$DP(\zeta I)[Z^\top E+E^\top Z]$ and uses the input difference bound
$2R$. Thus it covers noncommuting feature changes as well.

\subsection{Derivative energies of the probability-weighted atoms}
We now construct finite constants for \eqref{eq:gain-three-bounds}.
At a design $Z$ in the ball, keep $A_S,L_S,L,I_S,T_Z$ as defined above,
and include physical scaling in the atoms:
\[
 u_{R,S}=(L_S-L)T_Z/\sqrt{\lambda_0},\qquad
 u_{G,S}=(LI_S-L_S)T_Z/\sqrt{\lambda_0}.
\]
Dots in this subsection are derivatives in a unit Frobenius direction
$E$. If $a_S=\operatorname{tr}(A_S^{-1}\dot A_S)$ and
$\ell_S=a_S-\mathbb E_pa_S$, exact calibration gives
\begin{equation}\label{eq:gain-centered-score}
 \ell_S=2\operatorname{tr}((L_S-L)E)
  +\operatorname{tr}\{(A_S^{-1}-\mathbb E_pA_S^{-1})\dot P\},
 \quad \eta_i:=\partial\log\pi_i=\mathbb E_p[\ell_S\mid i\in S].
\end{equation}
Indeed the design part of $a_S$ is $2\operatorname{tr}(L_SE)$ and
$\mathbb E_pL_S=L$. Also $\dot\pi_i=\mathbb E_p[\mathbf1_i\ell_S]$
and $\dot I_S=-I_S\operatorname{diag}(\eta_i)$.

Stack $\sqrt{p_S}W^{1/2}u_{R,S}$ into a linear response map
$\mathcal B_R$ and define $\mathcal B_G$ likewise. Its derivative stacks
$\sqrt{p_S}W^{1/2}V_{R,S}$, where
\[
 V_{R,S}=\dot u_{R,S}+\ell_Su_{R,S}/2,
 \qquad V_{G,S}=\dot u_{G,S}+\ell_Su_{G,S}/2.
\]
Then $M_R=\mathcal B_R^\top\mathcal B_R$ and
$\mathcal J_t=\mathcal B_R^\top\mathcal B_G+\mathcal B_G^\top\mathcal B_R
+t\mathcal B_G^\top\mathcal B_G$. If their norms are bounded by
$\sqrt c\,r_*,\sqrt c\,g_*$ and derivative norms by
$\sqrt c\,v_R,\sqrt c\,v_G$, product differentiation proves
\begin{equation}\label{eq:gain-stack-constants}
 L_R=2r_*v_R,\qquad L_D=2(v_Rg_*+r_*v_G)+2Tg_*v_G,
 \qquad L_M=L_R+TL_D.
\end{equation}

Let $E_a$, $1\le a\le md$, be all coordinate feature directions. At the
base define the two positive matrices
\begin{equation}\label{eq:gain-base-energy}
 \begin{gathered}
 H_R=\sum_a\mathbb E_0[V_R(E_a)^\top V_R(E_a)],\qquad
 H_G=\sum_a\mathbb E_0[V_G(E_a)^\top V_G(E_a)],\\
 v_{R0}=\sqrt{d\|H_R\|},\qquad v_{G0}=\sqrt{d\|H_G\|}.
 \end{gathered}
\end{equation}
For a unit direction $E=\sum_ae_aE_a$, vector Cauchy--Schwarz gives
$V(E)^\top V(E)\preceq\sum_aV(E_a)^\top V(E_a)$. Together with
$W\preceq dcI$, this proves the claimed base derivative bounds.

These finite matrices are particularly simple. Put
$\Pi_M=Z_0Z_0^\top/\zeta$, $\Pi_C=I-\Pi_M$ and
$\theta_0=\sqrt{1+\zeta}$. Each $H$ in \eqref{eq:gain-base-energy} has
\begin{equation}\label{eq:gain-energy-sectors}
 H=h_C\Pi_C+h_M\Pi_M,\qquad \|H\|=\max\{h_C,h_M\}.
\end{equation}
To see this, first remove row signs by an orthogonal diagonal matrix.
Within-group row permutations fix the base and force each diagonal block
to be a linear combination of $I$ and $\mathbf1\mathbf1^\top$, and each
off-diagonal group block to be constant. Flip one whole row group and its
matching feature coordinate. This signed permutation fixes the base and
forces off-diagonal blocks of $H$ to vanish. Group permutations identify
the remaining coefficients. The complete coordinate-direction sum is
invariant under these orthogonal changes of direction basis. All row
operations here are signed permutations, so they preserve or relabel the
original-row law. Conjugating back restores arbitrary row signs.

For an explicit evaluation of \eqref{eq:gain-base-energy}, use
\eqref{eq:gain-base-cal-derivative}, \eqref{eq:gain-centered-score}, and
\begin{align*}
 \dot A_S&=E^\top D_SZ_0+Z_0^\top D_SE+\dot P,&
 \dot L_S&=A_S^{-1}(E^\top D_S-\dot A_SL_S),\\
 \dot L&=(I+\zeta I)^{-1}(E^\top-\dot BL),&
 \dot B&=Z_0^\top E+E^\top Z_0,\\
 \dot T&=\tfrac12\Pi_C\dot U\Pi_C
 +\frac{\Pi_C\dot U\Pi_M+\Pi_M\dot U\Pi_C}{1+\theta_0}
 +\frac{\Pi_M\dot U\Pi_M}{2\theta_0},&
 \dot U&=EZ_0^\top+Z_0E^\top.
\end{align*}
Here $\dot U=\partial(T_Z^2)$.
Differentiate each atom as a product and add its half-score term.
For the rational fixtures below, every squared energy lies in
$\mathbb Q(\theta_0)$. Indeed $Z_0=U/\sqrt{\lambda_0}$ for an integer
replica matrix $U$; the selected and Full maps have this same scalar
factor, while $\dot L_S,\dot L$ are rational and $\dot T$ has that
factor times a matrix in $\mathbb Q(\theta_0)$. Thus each $V$ is
$1/\sqrt{\lambda_0}$ times a matrix in that field.
No numerical derivative or eigenvalue is needed.

Record the finite base norms
\[
 l_0=\max_S\|L_{S,0}\|,\quad
 u_{R0}=\max_S\|u_{R,S,0}\|,\quad u_{G0}=\max_S\|u_{G,S,0}\|,
 \quad q_0=\max_S\sqrt{\sum_a\ell_{S,0}(E_a)^2}.
\]
The row Grams of these base fit and atom maps are diagonal because the
groups have disjoint supports. Their norms and $q_0$ therefore admit
exact square-root bounds. Finally the base stack norms are at most
$\sqrt{cA}$ and $\sqrt{c\max\{Q,B_2,\gamma_0\}}$.
For the latter, the $G$ mean block is $B_2P_W+\gamma_0R_W$ and its
contrast coefficient is $Q$. Both mean coefficients are nonnegative.
Once $v_R,v_G$ hold on the ball, integration gives valid stack bounds
\begin{equation}\label{eq:gain-stack-norms}
 r_* =\sqrt A+Rv_R,\qquad
 g_* =\sqrt{\max\{Q,B_2,\gamma_0\}}+Rv_G.
\end{equation}

\subsection{Finite transport of derivative energies}\label{app:gain-finite-transport}
The following formulas bound differences from the base at every point in
the radius-$R$ ball, in every unit feature direction. Constants are fixed
on that ball. In addition to \eqref{eq:gain-cal-gates}, use
\[
 \begin{gathered}
 \theta=\sqrt{1+v^2},\qquad \pi_0=s/m,\qquad
 q_{\rm score}=\sqrt d\{2/\sqrt p+L_X/p\},\\
 \pi_f=\pi_0e^{-2q_{\rm score}R},\qquad h=e^{2q_{\rm score}R}-1,
 \qquad d_p=e^{q_{\rm score}R}.
 \end{gathered}
\]
The determinant derivative bounds prove $|a_S|\le q_{\rm score}$,
$|\ell_S|\le2q_{\rm score}$ and
$e^{-2q_{\rm score}R}\le p_S/p_{S,0}\le e^{2q_{\rm score}R}$.
They also prove $\pi_i\ge\pi_f>0$. Set
\[
 D_R=5/(4p)+5/4+L_X/(2p^{3/2}),\quad
 f_0=z/(1+z^2),\quad b_{R0}=\sqrt{\lambda_0}u_{R0},\quad
 b_{G0}=\sqrt{\lambda_0}u_{G0}.
\]
The last two constants bound the raw base atoms since $\|T_0^{-1}\|\le1$.
Use the following nonnegative bounds, with $\Delta\dot P_*$ from
\eqref{eq:gain-dot-cal-difference}:
\begin{align}
 \Delta A&=(2z+R+L_X)R,&
 \Delta I_A&=\Delta A/(pp_0),\nonumber\\
 \Delta L&=(D_R-5/4)R,&\Delta F&=5R/4,\nonumber\\
 \Delta\dot L&=\Delta I_A[1+(2z+L_{X0})l_0]\nonumber\\
       &\quad+\{(2R+\Delta\dot P_*)l_0+(2v+L_X)\Delta L\}/p,\nonumber\\
 f_{\rm floor}&=1+(z-R)^2,&
 \Delta I_F&=(2z+R)R/[f_{\rm floor}(1+z^2)],\nonumber\\
 \Delta\dot F&=\Delta I_F(1+2zf_0)
          +\{2Rf_0+2v\Delta F\}/f_{\rm floor}.
 \label{eq:gain-fit-transport}
\end{align}
Here $\Delta I_A$ bounds the selected inverse difference, $\Delta L$
the selected fit difference, and $\Delta F$ the Full fit difference.
Resolvent subtraction gives $\Delta I_A$. Integrating the selected and
Full derivatives gives $\Delta L,\Delta F$. For the derivative difference,
write $\dot L_S=A_S^{-1}C_S$ with
$C_S=E^\top D_S-\dot A_SL_S$. Subtract as
$(A_S^{-1}-A_{S,0}^{-1})C_{S,0}+A_S^{-1}(C_S-C_{S,0})$.
The base factor has norm at most $1+(2z+L_{X0})l_0$ and the other
difference at most $(2R+\Delta\dot P_*)l_0+(2v+L_X)\Delta L$.
This proves the selected formula, preserving matrix order. The Full
formula is the same subtraction with constant penalty $I$, endpoint
inverse floor $f_{\rm floor}$, and base fit norm $f_0$.

For the centered score and inclusions, define
\begin{align}
 \Delta a&=\sqrt d\{\Delta I_A(2z+L_{X0})+(2R+\Delta\dot P_*)/p\},\nonumber\\
 \Delta\ell&=2\Delta a+hq_0,&q&=q_0+\Delta\ell,\nonumber\\
 \Delta\eta&=\Delta\ell+2hq_0\pi_0/\pi_f,&
 \Delta I&=h/\pi_f,\qquad B_I=1/\pi_f,\nonumber\\
 \Delta\dot I&=\Delta Iq_0+B_I\Delta\eta,&
 \Delta H&=\Delta F B_I+f_0\Delta I,\nonumber\\
 \Delta\dot H&=\Delta\dot F B_I+(5/4)\Delta I
                  +\Delta FqB_I+f_0\Delta\dot I.
 \label{eq:gain-inclusion-transport}
\end{align}
Trace Cauchy--Schwarz bounds $|a_S-a_{S,0}|$ by $\Delta a$.
Centering costs twice this bound. The remaining law change is
$(\mathbb E_p-\mathbb E_0)\ell_{S,0}$, whose absolute value is at most
$hq_0$. This proves the bound on $\Delta\ell$ and $|\ell_S|\le q$.
For $\eta_i=\mathbb E_p[\ell\mid i\in S]$, first change the score
inside the endpoint conditional expectation, at cost $\Delta\ell$.
Then change the conditional average of the base score. Its numerator
change is at most $h\pi_0q_0$, and its denominator change costs another
$h\pi_0q_0/\pi_f$ after division. This proves $\Delta\eta$; the entire
score difference is not divided by the inclusion floor.
Since $|\pi_i-\pi_0|\le h\pi_0$, reciprocals differ by at most
$\Delta I$. Subtracting $\dot I_S=-I_S\operatorname{diag}(\eta_i)$
gives $\Delta\dot I$. The last two formulas follow by subtracting
$LI_S$ and its derivative as ordered products. Here
$\|\dot I_S\|\le q/\pi_f$ and the base Full derivative is at most $5/4$.

Finally define the metric and atom differences
\begin{align}
 \Delta T&=vR,&\Delta\dot T&=R+z\Delta T,\nonumber\\
 \Delta F_R&=\Delta L+\Delta F,&\Delta G&=\Delta H+\Delta L,\nonumber\\
 \Delta u_R&=(\Delta F_R\theta+b_{R0}\Delta T)/\sqrt{\lambda_0},&
 \Delta u_G&=(\Delta G\theta+b_{G0}\Delta T)/\sqrt{\lambda_0},\nonumber\\
 d_{R0}&=5/4+5/(4p_0)+L_{X0}/(2p_0^{3/2}),&
 d_{G0}&=(5/4)/\pi_0+f_0q_0/\pi_0+d_{R0}-5/4,\nonumber\\
 \Delta\dot u_R&=\{(\Delta\dot L+\Delta\dot F)\theta
       +d_{R0}\Delta T+\Delta F_Rv+b_{R0}\Delta\dot T\}/\sqrt{\lambda_0},
       \nonumber\\
 \Delta\dot u_G&=\{(\Delta\dot H+\Delta\dot L)\theta
       +d_{G0}\Delta T+\Delta Gv+b_{G0}\Delta\dot T\}/\sqrt{\lambda_0},
       \nonumber\\
 x_R&=\Delta\dot u_R+(\Delta\ell/2)u_{R0}+(q/2)\Delta u_R,&
 x_G&=\Delta\dot u_G+(\Delta\ell/2)u_{G0}+(q/2)\Delta u_G,\nonumber\\
 v_R&=d_pv_{R0}+\sqrt d\,x_R,&
 v_G&=d_pv_{G0}+\sqrt d\,x_G.
 \label{eq:gain-energy-transport}
\end{align}
To check the metric bound, subtract the Sylvester equations for $\dot T$
and $\dot T_0$. Since both metrics are at least $I$, the Sylvester inverse
has norm at most $1/2$. The right side consists of the derivative of
$ZZ^\top-Z_0Z_0^\top$, of norm at most $2R$, and two products of the
metric difference with $\dot T_0$, of total norm at most $2z\Delta T$.
This gives $\Delta\dot T$. Integrating $\|\dot T\|\le v$ gives
$\Delta T$. No commutation is assumed.
Subtracting the products $F_RT_Z$, $GT_Z$ and their derivatives proves
the four atom bounds. The base HT derivative uses the actual base
inclusion derivative, bounded by $q_0/\pi_0$, in $d_{G0}$.
Subtracting the half-score product then gives $x_R,x_G$ as bounds for
$V_R-V_{R,0}$ and $V_G-V_{G,0}$.
In the derivative stack, split $\sqrt{p_S}V_S$ into its base-atom part
and its difference part. The first has norm at most $d_p$ times its
base stack norm since $p_S/p_{S,0}\le d_p^2$; the second is bounded
using $W\preceq dcI$. The triangle inequality proves the final two
formulas in \eqref{eq:gain-energy-transport}.

Equations \eqref{eq:gain-energy-transport}, \eqref{eq:gain-stack-norms}
and \eqref{eq:gain-stack-constants} now give finite constants satisfying
\eqref{eq:gain-three-bounds} on the entire ball. Together with
\eqref{eq:gain-cal-gates} and \eqref{eq:gain-spectral-gates}, they prove
both the two-sided gain modulus and strict improvement. This derivation
has included calibration, fits, law, actual inclusions, and the Full
metric at every point of the ball.

\subsection{Three exact radius certificates}\label{app:gain-certificates}
Table~\ref{tab:gain-certificates} gives three matched bases. Each row
uses its own interval $0<t\le T$ from \eqref{eq:mixture-step}, and all have
$R=1/1000$ in $\|H/\sqrt{\lambda_0}\|_F$. For every dense or sparse
change in that closed ball, every nonzero PSD query and every $0<t\le T$,
\[
 |h_R-h_t-t(D-Qt)c|\le C t\epsilon c,\qquad h_R-h_t\ge Dtc/4>0.
\]
The penalty at each endpoint is assumed exactly calibrated. The proof
does not certify an approximate calibration solver.

\begin{table}[ht]
\centering
\caption{Exact sufficient neighborhoods. The penalty $rI$ is the matched
base penalty, and $\lambda_0I$ is the fixed physical Full target.
The entries in the last column are strict rational upper bounds for
$\Phi(R)$, not rounded decimal estimates.}\label{tab:gain-certificates}
\begin{tabular}{cccccc}
\toprule
$(n,d,s)$ & $r$ & $\lambda_0$ & $T$ & $R$ & $\Phi(R)$ upper bound\\
\midrule
$(2,2,2)$ & $2/3$ & $5/3$ & $1089/3890$ & $1/1000$ & $291/100000$\\
$(2,2,3)$ & $5/4$ & $55/31$ & $28665/123952$ & $1/1000$ & $111/100000$\\
$(2,3,3)$ & $3/2$ & $363/101$ & $98972519/449051012$ & $1/1000$ & $209/100000$\\
\bottomrule
\end{tabular}
\end{table}

We describe the exact calculation so that the table is reproducible from
finite sums and inequalities. Use $a=1/100$ in
Section~\ref{app:gain-cal-difference}. Compute the physical mixture
coefficients by the count sums in Appendix~\ref{app:mixture}, and the
marked moments by its cardinality distribution above. The exact target
identity $\mathbb E[K_1/(r+K_1)]=n/(n+\lambda_0)$ checks matching at
the base. Sum over the $\binom ms$ original-row subsets and all $md$
coordinate directions using \eqref{eq:gain-centered-score} and
\eqref{eq:gain-base-energy}. Store each energy as a pair of rational
coefficients in $\mathbb Q(\sqrt{1+n/\lambda_0})$ and check its full
matrix form \eqref{eq:gain-energy-sectors}. The exact coefficients,
energy pairs, scalar polynomial bounds, all transport constants, and
rational gate slacks are supplied in \texttt{gain\_certificates.json}, with evaluator
\texttt{gain\_certificates.py} and separate exact check
\texttt{gain\_independent\_checks.py} in the reproducibility package. They are inputs and
outputs of the formulas above, not substituted proof premises.

For completeness, the enclosure rules use only integer and rational
arithmetic. For $x\ge0$, let
$k=\lfloor\sqrt{x}\,10^{18}\rfloor$, found by integer squares. Then
$k/10^{18}\le\sqrt x\le(k+1)/10^{18}$, with equality used when exact.
Respect the sign of the second coefficient when enclosing a quadratic
field element. For $0\le x<20$ use
\[
 e^x\le\sum_{j=0}^{18}\frac{x^j}{j!}
            +\frac{x^{19}/19!}{1-x/20}.
\]
Successive tail ratios are at most $x/20$, which proves this bound.
Round positive upper quantities upward and floors downward on the same
rational grid. Take
\[
 \chi\ge e^{2\sqrt d a},\quad E_a\ge e^a,\quad
 \delta\le\min\{\zeta/2,\mu_*a(1+\zeta/2)^2/(2\chi)\}
\]
with these directed bounds, and check \eqref{eq:gain-cal-gates} exactly.
The lower bound for $f_{\rm floor}=1+(z-R)^2$ uses a lower bound for
$z>R$. The upper bound for $f_0$ uses an upper bound for $z$ only in its
numerator and the exact denominator $1+\zeta$. Keep $J_*$ separately;
subtracting rounded derivative norms would not bound their difference.
Apply the same directed operations to
\eqref{eq:gain-fit-transport}--\eqref{eq:gain-energy-transport} and
\eqref{eq:gain-phi}. All calibration, rank, penalty, inclusion, leakage,
and gain gates are strictly satisfied for the three rows. In particular
the rational bounds in Table~\ref{tab:gain-certificates} are below each
row's exact $D/4$.

This evaluator enumerates $6+4+20$ labelled subsets and $8,8,18$
coordinate directions, respectively. For general parameters this is a
combinatorial calculation. With $N=\binom ms$, storing dense selectors
and base atoms uses $O(N(m^2+dm+d^2))$ scalar storage, in addition to
energy accumulation. Rational bit costs and evaluation of endpoint
inclusions remain separate. The certificates cover precisely these
matched bases and weights $0<t\le T$. They do not certify the earlier
feature grid or establish practical prediction gains, responsewise
dominance, or an optimal radius.

\section{Finite evaluations and reproducibility}\label{app:experiments}
\subsection{Finite evaluation of the mixture}\label{sec:experiments}
We evaluate the constructive mixture without changing the sampling law or
label budget. The outcome is sharp query risk normalized by Full ridge
loss. All calculations use finite sums or count formulas with 80-digit
arithmetic. We recompute four prespecified cases at 120-digit precision.
These are floating evaluations,
not interval certificates. The protocol below specifies the
inputs and controls; the supplement provides code and per-case results.

\paragraph{Structured designs.}
We vary $n\in\{2,3,4,8\}$, $d\in\{2,4,8\}$ and
$\lambda_0/n\in\{.01,.1,1,10\}$. Minimum, middle and leave-one-out sizes
give 140 distinct target-size points. The queries are identity, one
coordinate, flat rank one and tilted rank one. We also evaluate the
query-uniform criterion $\max\{\alpha(t),\beta(t)\}$.
Table~\ref{tab:gain} compares the theorem's weight $T$, a common weight,
and the query-uniform optimum within the mixture family. For each fixed
$(n,d,\lambda_0)$, the common weight is the smallest theorem weight $T_s$
among that triple's listed sample sizes. All three reported weights improve
on ridge for the four tested queries at every grid point.
The explicit weight is conservative: its median gain is 1.24\%, compared
with 4.50\% for the optimum.

\begin{table}[t]
\caption{Sharp-risk reduction from mixing on 140 structured target-size
points. The thresholds summarize this fixed grid.}
\label{tab:gain}
\centering
\begin{tabular}{lrrr}
\toprule
Weight & Median & $\ge1\%$ & $\ge5\%$\\
\midrule
Theorem $T$ & 1.24\% & 80 & 16\\
Common $T$ & 0.98\% & 70 & 12\\
Optimal mixture & 4.50\% & 121 & 62\\
\bottomrule
\end{tabular}
\end{table}

\paragraph{Strong controls.}
Improvement over ridge is not dominance over other procedures. Across the
560 physical query cases, $T$ wins 402 comparisons with same-law HT, 432
with uniform-subset HT, and 297 with query-aware quota sampling. The latter
two change the sampling law. For coordinate queries, quota sampling has
zero risk at 92 of the 140 points. Thus an all-query improvement within a fixed law
does not settle query-specific sampling design.

\paragraph{Perturbed designs.}
For a four-row, two-feature base, we perturb features in two directions
at signed magnitudes $.001$, $.01$ and $.1$. Three target penalties and
two sizes give 72 nonzero design points, each with four queries. We
recalibrate the whole procedure and retain the base mixture weights.
The base $T$ improves all 288 query cases; its smallest gain is 0.018\%.
The base optimal weight loses four cases, by up to 0.350\%.
The gain theorem in Section~\ref{sec:transfer} gives a separate local
guarantee. It does not certify this grid: its sufficient neighborhoods
and improving weight intervals have their own assumptions. The exact certificates
in Appendix~\ref{app:gain} use three different matched bases and targets.
These evaluations do not test the general-design LOO or ETF geometry.

\subsection{Protocol and complete comparisons}
The structured calculation uses 48 fixed-target families. For each $(n,d)$,
the size set is $\{d,d+\lfloor(nd-d)/2\rfloor,nd-1\}$ with duplicates
removed. There are 140 target-size points. The physical queries are
$I$, $e_1e_1^\top$, $\boldsymbol{1}\boldsymbol{1}^\top$ and $vv^\top$,
where $v_j=j/d$. Each has maximum diagonal entry one. The query-uniform
criterion is separate; it is not a fifth physical query.

The local weight is $T$ from Section~\ref{sec:mixture}. The common weight
is the minimum of these weights over the three-size grid of one target
family. The optimized weight minimizes $\max\{\alpha(t),\beta(t)\}$ over
$[0,1]$. It need not minimize risk for each known query. The same-law HT
control uses the original determinant sample. Uniform-subset HT samples
uniformly among all size-$s$ subsets. Quota sampling independently selects
$k_j$ rows uniformly within group $j$, with $1\le k_j\le n$ and
$\sum_jk_j=s$. Its allocation minimizes the sharp query risk over these
integer choices. Its risk is
\[
 \max_j W_{jj}\,\frac{a^2(n-k_j)}{nk_j(n-1)},
 \qquad a=\frac{n}{n+\lambda_0}.
\]
The code evaluates this optimization by a finite recurrence. A separate
threshold-feasibility calculation verifies every retained optimum.

\begin{table}[ht]
\caption{Mixture wins/ties/losses against strong controls. Each row has
140 paired target-size cases. Quota sampling uses the query and changes the law.}
\label{tab:controls}
\centering
\begin{tabular}{llrrr}
\toprule
Weight & Query & Same-law HT & Uniform HT & Quota\\
\midrule
$T$ & Identity & 82/0/58 & 93/0/47 & 116/0/24\\
& Coordinate & 82/0/58 & 93/0/47 & 30/0/110\\
& Flat & 140/0/0 & 140/0/0 & 116/0/24\\
& Tilted & 98/0/42 & 106/0/34 & 35/0/105\\
Optimal & Identity & 85/0/55 & 94/0/46 & 130/0/10\\
& Coordinate & 85/0/55 & 94/0/46 & 44/0/96\\
& Flat & 140/0/0 & 140/0/0 & 130/0/10\\
& Tilted & 103/0/37 & 111/0/29 & 51/0/89\\
\bottomrule
\end{tabular}
\end{table}

For perturbations, $X_0=(e_1,e_1,e_2,e_2)^\top$ and
$\lambda_0\in\{.2,2,20\}$, $s\in\{2,3\}$. Define
\[
 H_1=\frac{1}{\sqrt6}
 \begin{pmatrix}0&1\\1&-1\\-1&0\\1&1\end{pmatrix},\qquad
 H_2=\frac{1}{\sqrt5}
 \begin{pmatrix}1&0\\0&-1\\1&1\\-1&0\end{pmatrix}.
\]
We evaluate $X_0+\varepsilon H_i$ for
$\varepsilon\in\{\pm.001,\pm.01,\pm.1\}$. Both directions break exact
replication; $H_1$ preserves Gram isotropy while changing its scale, whereas
$H_2$ also changes anisotropy. Here $|\varepsilon|$ is the raw Frobenius
perturbation size; the target-whitened size in Theorem~\ref{thm:transfer}
is $|\varepsilon|/\sqrt{\lambda_0}$. At each point, the code reconstructs Full,
the calibrated positive definite penalty, every subset probability and fit,
the unequal row-inclusion probabilities, and the native normalization.
The same-law HT operator is
$(G+A_0)^{-1}X^\top\operatorname{diag}(\mathbf1_{i\in S}/\pi_i)$.
The raw mean-squared-error operator includes any finite mean defect;
covariance and squared bias are retained separately. Each risk is a largest
generalized eigenvalue. Physical queries and target penalties stay fixed.

The 72 nonzero points give 288 query cases per method. Six unperturbed
bases add 24 cases, kept separate from perturbation counts. The four
optimized-weight losses all have $\lambda_0=20$, $s=2$, flat query and
$|\varepsilon|=.1$. Their reductions are $-0.248281\%$ and $-0.179110\%$
for $H_1$, and $-0.349723\%$ and $-0.046204\%$ for $H_2$.
Against same-law HT, the base $T$ wins 108/288 cases and the base optimum
wins 124/288. The structured optimum lies at a contrast--mean crossing;
its finite perturbation losses do not establish a causal explanation
or rule out improvement on a smaller neighborhood.

\paragraph{Arithmetic and reproducibility.}
The calculations use Python 3.13.11 and mpmath 1.3.0 on a CPU, without
random seeds, model training or external datasets. Primary precision is
80 decimal digits. The scalar calibration residual gate is $10^{-60}$;
bisection has a 400-step cap. Four prespecified cases are replayed at 120
digits with agreement tolerance $10^{-55}$: structured $(n,d,\lambda_0/n,s)$
equal to $(2,2,.1,2)$ and $(8,8,10,63)$, and perturbed
$(\lambda_0/2,s,H,\varepsilon)$ equal to $(1,2,H_1,.1)$ and $(10,3,H_2,-.1)$.
The structured replays include their target families for the common weight.
All four replays pass. Wins and ties use
$10^{-30}\max\{1,|r_1|,|r_2|\}$. There are no statistical error bars.
The full run retains 6,148 method rows with no unresolved cases.

The supplementary package separates exact polynomial checks from these
floating calculations. It includes synthetic inputs in code, all per-case
risks, a complete rerun driver and a separate risk-reconstruction checker.
Long coefficient arrays and method-row records are supplied as machine-readable
files. No figure renderer is needed to reproduce the numerical claims.

\section{Source relations}\label{app:sources}
\paragraph{Law and mean.}
The identity
$\det(X_S^\top X_S+R)=\det(R)\det(I+X_SR^{-1}X_S^\top)$
identifies the law as a $k$-DPP on $I+XR^{-1}X^\top$
\citep[Section~5]{kulesza2012}. Conditioning a uniform-base regularized
DPP on its size gives the same law \citep[Definition~2]{derezinski2020design}.
The selected-inverse expectation of \citet[Lemma~1 and Corollary~1]{schreurs2020}
gives the fixed-penalty ridge mean by the Woodbury identity.
\paragraph{Inverse calibration.}
Our calibration theorem combines this mean identity with the interior
mean-map theorem for minimal exponential families
\citep[Proposition~3.2 and Theorem~3.3]{wainwright2008}.
The spectral expansion identifies a finite family with a truncated-cube
mean set. Its interior condition gives the effective-dimension threshold.
Uniqueness lifts from the mean vector to the shared matrix penalty.
The penalty bounds follow from binomial-coefficient ratios.
Thus calibration is a model-specific consequence of these established results.
Arbitrary SPD target scope follows by a change of coefficient coordinates.
\citet[Theorem~4.1]{loonis2019} construct fixed-size projection DPPs
with prescribed original-row inclusion probabilities.
That inverse-design result concerns row marginals.
Here calibration matches the selected ridge response operator.
\citet[Theorem~1]{anzai2026} use this mean-map principle to match item
inclusion probabilities in an unconditioned DPP with fixed diversity.
Here the matched moments are latent spectral activations under an exact
row budget, and they determine the ridge mean.
Our uniqueness concerns the penalty used jointly in sampling and fitting.
It is not identification of a generic $k$-DPP kernel from its law;
such kernels have scale and other invariances \citep{hino2026}.

\paragraph{Exact debiasing.}
\citet[Theorems~1 and~6]{derezinski2020debiasing} obtain exact ridge
debiasing with scaled regularization under surrogate sketching. Their
row count is random. We instead use exactly $s$ distinct original rows
and the same matrix penalty in sampling and fitting, with an arbitrary
SPD target. The effective-dimension threshold and debiasing goal have precedents.

\paragraph{Covariance geometry.}
\citet{derezinski2018} establish selected-OLS identities under ordinary
volume sampling and study a regularized reverse-deletion procedure.
At positive penalty, that sequential procedure generally has a different
endpoint law from our direct determinant law. The proof of their Theorem~18
also uses repeated coordinate rows and group counts for noise-averaged
ridge prediction risk. Our result instead concerns fixed-response
covariance under the direct calibrated subset law and its complete equality
space. \citet{rhee2026phase} studies
fixed-design OLS covariance attainment. The normalized worst-response
OLS design criterion of \citet[Definitions~6 and~8]{derezinski2019minimax}
is another close precedent. It divides by residual energy and optimizes
over designs and estimators. Here one law and estimator are fixed for each
comparison; the response is deterministic, covariance is centered over
the subset draw, $W$ is a fixed physical coefficient query, and positive
Full ridge loss normalizes every nonzero response. We give complete
equality spaces on balanced replicas at the stated budgets, an exact
attainability test for a general-design leave-one-out envelope, and a
flat-query criterion on existing real ETFs at two deletions.
For isotropic queries, we also give sharp risk at three deletions. The
leave-one-out envelope can be strict, and the ETF criterion does not
classify the risk of every nonflat query.

\paragraph{Frame erasures and ETF structure.}
\citet{goyal2001} study one-erasure average and worst reconstruction MSE
under zero-mean, equal-variance, uncorrelated quantization noise.
Their estimator uses linear dual-frame reconstruction. \citet{bodmann2005}
study frame reconstruction-error operator norms, including the
pair-uniform geometry of equiangular tight frames. Probability-weighted
erasure criteria for dual frames also appear in \citet{arati2024}.
These are different losses from the determinant-weighted, centered
covariance divided by Full ridge loss here. In particular, an ETF
calculation within a fixed frame class is not a claim that ETFs optimize
our risk over designs: \citet[Proposition~3.8]{bodmann2005} show that
two-uniform frames can maximize their average two-erasure error measure.
The independence of the projectors $x_ix_i^\top$
for noncollinear equiangular lines is established in
\citet[Section~5.1]{fickus2018}. The dimension of the flat-energy
physical-query space follows from this rank fact and tightness; our
two-deletion result instead concerns the covariance deficit and all its
equality responses. Flat energy $x_i^\top W x_i=\operatorname{tr}(W)/d$
is not the source's notion of a flat ETF.

\paragraph{Three ETF deletions.}
The triangle classes and their erasure spectra are classical
\citep{bodmann2005}. Their unequal determinant weights follow directly
from those spectra. Our theorem uses the weights in the mean and
centered second moment. It proves a strict normalized
sector gap and identifies every maximizing response.
It does not assume uniform triples or claim frame-design optimality.

\paragraph{Estimator mixing and restricted responses.}
Horvitz--Thompson weighting \citep{horvitz1952} and dependent vector
estimator averaging \citep{lavancier2016} are established methods.
\citet[Theorem~5.1]{loonis2019} optimize fixed linear survey weights
for specified auxiliary variables, using first- and second-order inclusion probabilities.
Our selected ridge inverse depends on the realized subset.
We use the contrast derivative and strict sector gap to choose one
feature-only interval of scalar mixture weights.
It improves every nonzero PSD query's sharp risk at the balanced base and
preserves the complete maximizing space.
The mean-share value follows by optimizing the contrast and mean blocks
separately.
The further comparison gives the exact all-query improvement phase on
the protected interval and an attaining adverse query.
It concerns deterministic responses and this mixture family.

\paragraph{Recalibrated transfer.}
Classical eigenspace bounds \citep{yu2015} apply after we control the
calibration, subset law, fit and Full normalization.
The full equality space then yields the raw-response restriction theorem.
The individual top-Ritz bound is also classical \citep[Theorem~2]{lili2005}.
We combine it with the moving Full metric and explicit remainders.
Retaining all base contrasts removes query-gap dependence at the cost of
a larger space. The gain theorem compares two moving sharp risks under
the same recalibrated procedure, including its actual inclusion probabilities.
Its coupled dependence on the mixture weight comes from bounding the
shared-sample difference. This is stronger than qualitative gain persistence
at one fixed positive weight, which follows from continuity.


\begin{thebibliography}{99}
\bibitem[Anzai and Hino(2026)]{anzai2026}
So Anzai and Hideitsu Hino.
Determinantal point process approximation under positive and negative dependence.
arXiv:2607.18849, 2026.

\bibitem[Arati et~al.(2024)Arati, Devaraj, and Mondal]{arati2024}
S. Arati, P. Devaraj, and Shankhadeep Mondal.
Optimal dual frames and dual pairs for probability modelled erasures using
weighted average of operator norm and spectral radius.
arXiv:2411.00502, 2024.

\bibitem[Bodmann and Paulsen(2005)]{bodmann2005}
Bernhard G. Bodmann and Vern I. Paulsen.
Frames, graphs and erasures.
\emph{Linear Algebra and its Applications}, 404:118--146, 2005.

\bibitem[Derezi\'nski et~al.(2019)Derezi\'nski, Clarkson, Mahoney, and Warmuth]{derezinski2019minimax}
Micha\l{} Derezi\'nski, Kenneth L. Clarkson, Michael W. Mahoney,
and Manfred K. Warmuth.
Minimax experimental design: Bridging the gap between statistical and
worst-case approaches to least squares regression.
In \emph{Proceedings of the Thirty-Second Conference on Learning Theory},
volume 99 of PMLR, pages 1050--1069, 2019.

\bibitem[Derezi\'nski and Warmuth(2018)]{derezinski2018}
Micha\l{} Derezi\'nski and Manfred K. Warmuth.
Reverse iterative volume sampling for linear regression.
\emph{Journal of Machine Learning Research}, 19(23):1--39, 2018.

\bibitem[Derezi\'nski et~al.(2020a)Derezi\'nski, Bartan, Pilanci, and Mahoney]{derezinski2020debiasing}
Micha\l{} Derezi\'nski, Burak Bartan, Mert Pilanci, and Michael W. Mahoney.
Debiasing distributed second order optimization with surrogate sketching
and scaled regularization.
In \emph{Advances in Neural Information Processing Systems}, volume 33, 2020.

\bibitem[Derezi\'nski et~al.(2020b)Derezi\'nski, Liang, and Mahoney]{derezinski2020design}
Micha\l{} Derezi\'nski, Feynman Liang, and Michael W. Mahoney.
Bayesian experimental design using regularized determinantal point processes.
In \emph{Proceedings of AISTATS}, volume 108 of PMLR, pages 3197--3207, 2020.

\bibitem[Fickus et~al.(2021)Fickus, Jasper, Mixon, and Peterson]{fickus2018}
Matthew Fickus, John Jasper, Dustin G. Mixon, and Jesse D. Peterson.
Hadamard equiangular tight frames.
\emph{Applied and Computational Harmonic Analysis}, 50:281--302, 2021.

\bibitem[Goyal et~al.(2001)Goyal, Kova\v{c}evi\'c, and Kelner]{goyal2001}
Vivek K. Goyal, Jelena Kova\v{c}evi\'c, and Jonathan A. Kelner.
Quantized frame expansions with erasures.
\emph{Applied and Computational Harmonic Analysis}, 10(3):203--233, 2001.

\bibitem[Hino and Yano(2026)]{hino2026}
Hideitsu Hino and Keisuke Yano.
From DPPs to $k$-DPPs: identifiability analysis via spectral decomposition.
arXiv:2605.25526, 2026.

\bibitem[Horvitz and Thompson(1952)]{horvitz1952}
D. G. Horvitz and D. J. Thompson.
A generalization of sampling without replacement from a finite universe.
\emph{Journal of the American Statistical Association}, 47(260):663--685, 1952.

\bibitem[Kulesza and Taskar(2012)]{kulesza2012}
Alex Kulesza and Ben Taskar.
Determinantal point processes for machine learning.
\emph{Foundations and Trends in Machine Learning}, 5(2--3):123--286, 2012.

\bibitem[Lam et~al.(2007)Lam, Postnikov, and Pylyavskyy]{lam2007schur}
Thomas Lam, Alexander Postnikov, and Pavlo Pylyavskyy.
Schur positivity and Schur log-concavity.
\emph{American Journal of Mathematics}, 129(6):1611--1622, 2007.

\bibitem[Lavancier and Rochet(2016)]{lavancier2016}
Fr\'ed\'eric Lavancier and Paul Rochet.
A general procedure to combine estimators.
\emph{Computational Statistics \& Data Analysis}, 94:175--192, 2016.
\href{https://doi.org/10.1016/j.csda.2015.08.001}{doi:10.1016/j.csda.2015.08.001}.

\bibitem[Li and Li(2005)]{lili2005}
Chi-Kwong Li and Ren-Cang Li.
A note on eigenvalues of perturbed Hermitian matrices.
\emph{Linear Algebra and its Applications}, 395:183--190, 2005.

\bibitem[Loonis and Mary(2019)]{loonis2019}
Vincent Loonis and Xavier Mary.
Determinantal sampling designs.
\emph{Journal of Statistical Planning and Inference}, 199:60--88, 2019.
\href{https://doi.org/10.1016/j.jspi.2018.05.005}{doi:10.1016/j.jspi.2018.05.005}.
Theorem numbers refer to the
\href{https://xmary.perso.math.cnrs.fr/Articles/Determinantal_sampling_designs.pdf}{expanded author manuscript}.

\bibitem[Rhee(2026)]{rhee2026phase}
Kihun Rhee.
A geometric phase boundary for volume-sampled linear readouts.
arXiv:2608.26877, 2026.

\bibitem[Schreurs et~al.(2020)Schreurs, Fanuel, and Suykens]{schreurs2020}
Joachim Schreurs, Micha\"el Fanuel, and Johan A. K. Suykens.
Ensemble kernel methods, implicit regularization and determinantal point processes.
arXiv:2006.13701, 2020.

\bibitem[Wainwright and Jordan(2008)]{wainwright2008}
Martin J. Wainwright and Michael I. Jordan.
Graphical models, exponential families, and variational inference.
\emph{Foundations and Trends in Machine Learning}, 1(1--2):1--305, 2008.

\bibitem[Yu et~al.(2015)Yu, Wang, and Samworth]{yu2015}
Yi Yu, Tengyao Wang, and Richard J. Samworth.
A useful variant of the Davis--Kahan theorem for statisticians.
\emph{Biometrika}, 102(2):315--323, 2015.
\end{thebibliography}
\end{document}